\newtheorem{thm}{Theorem}[section]
\newtheorem{cor}[thm]{Corollary}
\newtheorem{lemma}[thm]{Lemma}
\newtheorem{prop}[thm]{Proposition}
\newtheorem{defn}[thm]{Definition}
\newtheorem{conj}[thm]{Conjecture}
\newtheorem{open}[thm]{Open question}
\theoremstyle{remark}
\theoremstyle{definition}
\newtheorem{rmk}[thm]{Remark}
\newtheorem{exa}[thm]{Example}
\numberwithin{equation}{thm}
\def\beq{\begin{equation}}
\def\eeq{\end{equation}}
\def\beqn{\begin{equation*}}
\def\eeqn{\end{equation*}}
\def\ben{\begin{enumerate}}
\def\een{\end{enumerate}}
\def\crash#1{}
\def\A{{\mathbb A}}
\def\B{{\mathbb B}}
\def\C{{\mathbb C}}
\def\N{{\mathbb N}}
\def\R{{\mathbb R}}
\def\ie{\emph{i.e.}~}
\def\cI{{\mathcal I}}
\def\cM{{\mathcal M}}
\def\cO{{\mathcal O}}
\def\cW{{\mathcal W}}
\def\sF{{\mathscr F}}
\def\sI{{\mathscr I}}
\def\sP{{\mathscr P}}
\def\bC{{\mathbf C}}
\def\bD{{\mathbf D}}
\def\bI{{\mathbf I}}
\def\bU{{\mathbf U}}
\def\a{\alpha}
\def\la{\lambda}
\def\ol{\overline}
\def\limpro{\mathop{\lim\limits_{\displaystyle\leftarrow}}}
\def\limind{\mathop{\lim\limits_{\displaystyle\rightarrow}}}
\def\lt{\langle}
\def\gt{\rangle}
\def\void{{\rm \varnothing}}
\def\rhook{{\hookrightarrow}} 
\def\bSet{\mathbf{Set}}
\def\op{{\rm op\,}}
\def\Roos{{\rm Roos\,}}
\def\bPro{\mathbf{Pro}}
\def\bBorn{\mathbf{Born}}
\def\bCBorn{\mathbf{CBorn}}
\def\bLoc{\mathbf{Loc}}
\def\bCoh{\mathbf{Coh}}
\def\bAb{\mathbf{Ab}}
\def\bCpt{\mathbf{Cpt}}
\def\wotimes{\widehat{\otimes}}
\author{Federico Bambozzi}
\title{Theorems A and B for dagger quasi-Stein spaces}
\thanks{
	The author acknowledges the support of the University of Padova by MIUR PRIN2010-11 ``Arithmetic Algebraic Geometry and Number Theory", and the University of Regensburg with the support of the DFG funded CRC 1085 ``Higher Invariants. Interactions between Arithmetic Geometry and Global Analysis" that permitted him to work on this project.}
\begin{document}

\begin{abstract}
	 In this article we use the homological methods of the theory of quasi-abelian categories and some results from functional analysis to prove Theorems A and B for (a broad sub-class of) dagger quasi-Stein spaces. In particular we show how to deduce these theorems from the vanishing, under certain hypothesis, of the higher derived functors of the projective limit functor. Our strategy of the proof generalizes and puts in a more formal framework the Kiehl's proof for rigid quasi-Stein spaces.
\end{abstract}

\maketitle

\tableofcontents

\section{Introduction}

Quasi-Stein spaces in non-Archimedean geometry were introduced by Kiehl in \cite{KI}, where he proved, with a brief and elegant argument, that the Theorems A and B hold for them (see the beginning of section \ref{sec:thm_B} for a general statement of these theorems). In \cite{GK} Grosse-Kl\"onne has introduced the quasi-Stein spaces in the context of his theory of dagger spaces. 

If a quasi-Stein space is without borders it is called Stein (important disclaimer: the word Stein in this work is used with a different meaning with respect to the use which is done in \cite{Liu}). Non-Archimedean Stein spaces are the precise analogous of their complex analytic counter-part and they have been studied by several authors: \cite{Lut}, \cite{Put} \cite{Chi}, see also Chapter 6 of \cite{Poi} for a discussion about Stein spaces in the context of global analytic spaces. 
It is easy to check that the notion of Stein space given in classical rigid geometry by Kiehl is equivalent to the notion given by dagger analytic geometry. Therefore, Kiehl's Theorem applies also for dagger Stein spaces, which canonically coincide with the non-dagger ones. But for general dagger quasi-Stein spaces the question of whether they satisfy Theorems A and B turned out to be harder to settle.

In this work, we reduce the problem of proving Theorem B (and as a consequence Theorem A) for dagger quasi-Stein spaces to the calculation of the derived functors of the projective limit functor. In particular we use the theory of quasi-abelian categories, as developed by Schneiders \cite{ScQA} and later Prosmans \cite{Pr}, to define in a rigorous way the derived functors of the projective limit functor for locally convex and bornological vector spaces. We will then use, adapt and further strength, some advanced results in functional analysis that permits to check, under suitable circumstances, that the projective limit functor is an exact functor (see Definition \ref{defn:exact_fucntor} for the definition of exact functor in the context of quasi-abelian categories). This reduction step permits to overcome the difficulties thanks to the literature available in the hard topic of studying projective limits of locally convex spaces. 

In more detail, the paper is structured in the following way. In section 2 we introduce the notation and briefly review the main concepts that are used in the paper. Section 3 starts studying the derived functors of the projective limit functor in a general quasi-abelian category. After that, the results will be specialized for the case $\bLoc_k$ and $\bBorn_k$, the categories of locally convex and bornological vector spaces of convex type (we are motivated to consider also bornological vector spaces for the relevance of these with our previous works: \cite{Bam}, \cite{BaBe}, \cite{BaBeKr}; we say more on this in the concluding section). We give conditions under which the projective limit of a countable family of LB spaces has vanishing higher derived functors, in the sense of the theory of quasi-abelian categories. In section 4 we use the results of section 3 to prove our main results. We begin section 4 by reviewing the notions of quasi-Stein space, coherent sheaf and the statement of the Theorems A and B. Then, we explain how coherent sheaves on analytic spaces can be considered as sheaves with values in the category $\bLoc_k$ or $\bBorn_k$. We proceed re-proving Kiehl's Theorem as a first explanation of the use of the machinery developed so far. We end section 4 proving our main result, which is Theorem \ref{thm:B_dagger}, whose immediate Corollary \ref{cor:B_dagger} is the Theorem B for dagger quasi-Stein spaces that admits a closed embedding in a direct product of affine spaces and closed polydisks. This level of generality is enough for the applications.

In the concluding section we briefly mention a couple of applications of our main result. In these applications we see how the Theorem B for dagger quasi-Stein spaces permits to simplify proofs from \cite{Bert} and \cite{DLZ} and to give to their computation a more theoretical explanation. We end the paper describing Conjecture \ref{conj:embedding} about the geometry of quasi-Stein spaces which, if proved, would imply that the main result presented in this paper in fact implies the Theorem B for all dagger quasi-Stein spaces, in full generality. Conjecture \ref{conj:embedding} is our generalization of the so-called Embedding Theorems for Stein spaces: see \S V.1 of \cite{GR} for the complex case and \cite{Lut} for the non-Archimedean version of the Embedding Theorem.

\paragraph{\textbf{Acknowledgements}}
 
The author would like to thank prof. Elmar Grosse-Kl\"onne for suggesting to him to try to prove Theorem B for dagger quasi-Stein spaces and for helpful discussions. He is thankful to Florent Martin, Lorenzo Fantini and Daniele Turchetti too, for discussions related to the topics of this article.

\section{Notation and terminology} 

We list here the notation we will use in the rest of the paper.

\begin{itemize}
	\item $k$ will always denote a fixed valued base field. By valued field we mean a field which is complete with respect to a non-trivial valuation which can be either Archimedean or non-Archimedean.
	\item $k^\times = \{ x \in k | x \ne 0 \}$, $k^\circ = \{ x \in k | |x| \le 1 \}$, $k^{\circ \circ} = \{ x \in k | |x| < 1 \}$ and $(k^\circ)^\times = k^\circ \cap k^\times$, $(k^{\circ \circ})^\times = k^{\circ \circ} \cap k^\times$.
	\item If $V$ is a $k$-vector space, by a \emph{Banach disk} of $V$ we mean a subset $B \subset V$ that is a disk (\ie it is absolutely convex) and such that the vector subspace of $V$ spanned by $B$, usually denoted $V_B$, equipped with the gauge semi-norm induced by $B$, is a Banach space.
	\item For a category $\bC$ we often use the notation $X \in \bC$ to mean that $X$ is an object of $\bC$.
	\item If $I$ is a small category, the notation $\bC^I$ will denote the category of covariant functors from $I$ to $\bC$ and the notation $\bC^{I^{op}}$ the category of contravariant functors from $I$ to $\bC$.
	\item If $\bC$ is a quasi-abelian category, then $D(\bC)$ denotes the derived category of unbounded complexes of $\bC$, $D^{\ge 0}(\bC)$ (resp. $D^{+}(\bC)$) the derived categories of complexes which are zero in degree $< 0$ (resp. bounded below), with respect to the left t-structure, and $D^{\le 0}(\bC)$ (resp. $D^{-}(\bC)$) the derived categories of complexes which are zero in degree $> 0$ (resp. bounded above).
	\item For any category $\bC$ we denote by $\bPro(\bC)$ the category of pro-objects of $\bC$.
	\item We will use the notation ${``\underset{i\in I}\limpro"} V_i$ for objects of $\bPro(\bC)$, with $V_i \in \bC$.
	\item Given two polyradii $\rho, \rho' \in \R_+^n$ with the notation $\rho < \rho'$ we mean that each component of $\rho$ is strictly smaller than the correspondent component of $\rho'$.
\end{itemize}

\paragraph{\textbf{Quasi-abelian categories}}

In this section $\bC$ will always denote a quasi-abelian category.
We recall the notions from the theory of quasi-abelian categories that we will use later on. The main reference for the theory of quasi-abelian categories is \cite{ScQA}. We do not recall here all the basics of the theory for which we refer to section 1.1 of ibid. (in particular see Definition 1.1.3 of ibid. for the definition of quasi-abelian category). One of the main properties of a quasi-abelian category $\bC$ is that the family of strictly exact sequences of $\bC$ endow $\bC$ with a structure of Quillen exact category.

\begin{defn} \label{defn:exact_fucntor}
	Let $F: \bC \to \bD$ be an additive functor between two quasi-abelian categories. $F$ is said to be \emph{exact} if it maps short strictly exact sequences of $\bC$ to short strictly exact sequences of $\bD$. $F: \bC \to \bD$ is said to be \emph{left exact} if, given a short strictly exact sequence
	\[ 0 \to X \to Y \to Z \to 0 \]
	of $\bC$, then the sequence
	\[ 0 \to F(X) \to F(Y) \to F(Z) \]
	is strictly exact.
\end{defn}

\begin{defn}
	Let $F: \bC \to \bD$ be a left exact functor. The \emph{right derived functor} of $F$, if it exists, is the functor $\R F$ which render the diagram
	\[
\begin{tikzpicture}
\matrix(m)[matrix of math nodes,
row sep=2.6em, column sep=2.8em,
text height=1.5ex, text depth=0.25ex]
{  \bC  & \bD  \\
   D^+(\bC) & D^+(\bD)   \\};
\path[->,font=\scriptsize]
(m-1-1) edge node[auto] {$$} (m-1-2);
\path[->,font=\scriptsize]
(m-1-1) edge node[auto] {$$} (m-2-1);
\path[->,font=\scriptsize]
(m-1-2) edge node[auto] {$$}  (m-2-2);
\path[->,font=\scriptsize]
(m-2-1) edge node[auto] {$$}  (m-2-2);
\end{tikzpicture}
\]
 commutative in a universal way (cf. Definition 1.3.1 of \cite{ScQA} for a fully detailed definition).
\end{defn}

\begin{defn}
Let $F: \bC \to \bD$ be an additive functor between two quasi-abelian categories. A full additive subcategory $\bI$ of $\bC$ is called \emph{$F$-injective} if
\begin{itemize}
\item for any object $X$ of $\bC$ there is an object $I \in \bU$ and a strict monomorphism $X \to I$.
\item for any strictly exact sequence
\[ 0 \to X \to X' \to X'' \to 0 \]
of $\bC$ where $X$ and $X'$ are object of $\bI$, $X''$ is also in $\bI$.
\item for any strictly exact sequence
\[ 0 \to X \to X' \to X'' \to 0 \]
where $X$, $X'$ and $X''$ are objects of $\bI$, the sequence
\[ 0 \to F(X) \to F(X') \to F(X'') \to 0 \]
is strictly exact in $\bD$.
\end{itemize}
\end{defn}

\begin{defn}
We say that $\bC$ \emph{has exact products} if for any index set $I$ the functor $\prod_{I}: \bC^{I^{op}} \to \bC$ is well defined and it is an exact functor.
\end{defn}

The heart of the left t-structure of $D(\bC)$ is an abelian category which can be interpreted as the abelian envelop of $\bC$ (cf. Proposition 1.2.36 of \cite{ScQA} for more details). Since we consider on $D(\bC)$ the left t-structure, we call its heart the \emph{left heart} of $D(\bC)$ and we denote it by $LH(\bC)$. One can check that the derived category of $LH(\bC)$ (in the usual sense since it is an abelian category) is equivalent to $D(\bC)$ and therefore one can think to complexes of $\bC$ as taking cohomology with values in $LH(\bC)$. Thus, one can define the cohomology functors $LH^n: D(\bC) \to LH(\bC)$ (cf. Definition 1.2.8 of \cite{ScQA}). In particular we have the following remark.

\begin{rmk} \label{rmk:LH_0}
A complex of objects of $\bC$, $V_\bullet$, is strictly exact if and only if $LH^n(V_\bullet) \cong 0$ for all $n$ (cf. Corollary 1.2.20 of \cite{ScQA}). 
\end{rmk}

\paragraph{\textbf{Functional analysis}} Here we recall some basic facts, notations and definitions from functional analysis that will be used throughout the paper.

\begin{defn}
We denote by $\bLoc_k$ the category of locally convex spaces over $k$.
\end{defn}

The theory of locally convex spaces is well established nowadays. Basic text books for the Archimedean part of the theory are so many that we cite only the one to which we will refer to later, \cite{J} . For the non-Archimedean theory we mention \cite{PGS} and \cite{Roo}.

\begin{defn}
We denote by $\bBorn_k$ the category of bornological vector spaces of convex type over $k$ and with $\bCBorn_k$ the category of complete bornological vector spaces over $k$.
\end{defn}

The theory of bornological vector spaces is not a mainstream topic of functional analysis as it is the theory of locally convex spaces, although it has been quite popular in the '70s. Also for bornological spaces the Archimedean side of the theory is much more developed, but our main reference \cite{H2} has been written to encompass uniformly any base field, in the same spirit of the present paper.

\begin{defn} \label{defn:t_b_functors}
There is an adjoint pair of functors ${}^t: \bBorn_k \to \bLoc_k$, ${}^b : \bLoc_k \to \bBorn_k$ defined in the following way. ${}^b$ associates to each locally convex space $E$ the bornological vector space defined on $E$ by its bounded subsets (this bornology is usually called canonical or von Neumann); ${}^t$ associates to each bornological vector space $E$ the locally convex space defined on $E$ by its bornivorous subsets (\ie the subsets that absorb all bounded subsets). Details about these constructions can be found in the Chapter 1 of \cite{H2}.
\end{defn}

We now recall the terminology about the spaces we will use in this paper. Since we will use both the category $\bLoc_k$ and $\bBorn_k$ we have two versions of the same definition in the two contexts. But the reader must be warned that this duplication process is not a futile exercise, and some basic properties of the spaces may change when considering a space as a bornological space or as a locally convex space (the most important difference probably is the failure of the open mapping theorem for Fr\'echet spaces when they are considered as bornological vector spaces). In section \ref{sec:thm_B} we will see how these different properties will affect our results depending on the choice of working on $\bLoc_k$ or $\bBorn_k$.

\begin{defn}
A \emph{Fr\'echet space} in $\bLoc_k$ is a locally convex space whose topology is metrizable and complete. A \emph{bornological Fr\'echet space}, is an object of $\bBorn_k$ which is isomorphic to $E^b$ for some Fr\'echet space $E \in \bLoc_k$.
\end{defn}

\begin{defn}
In both $\bLoc_k$ and $\bBorn_k$ an \emph{LB space} is a space which is isomorphic to a countable direct limit of Banach spaces.
\end{defn}


\begin{defn} \label{defn:nuclear_born}
A bornological space is called \emph{nuclear} if it is isomorphic to a direct limit of Banach spaces whose system maps are nuclear maps.
\end{defn}

We end this section introducing a technical notion needed to explain and prove the main results of functional analysis on which this paper relies. This notion is the notion of \emph{webbed space}. We describe (and we will use) only the topological version of this concept. Moreover, as already did by the author in \cite{Bam2}, we adapt the classical definition to encompass the case when $k$ is a non-Archimedean base field, which is usually not considered in the literature about these advanced topic in functional analysis, as for example in \cite{Wen} where the results are only stated over $\R$ and $\C$. There are several variations of the notion of webbed space, all strictly related and essentially equivalent for applications. We give the following one.

\begin{defn} \label{defn:top_webs}
A \emph{web} on a Hausdorff locally convex space $E$ is a map $\cW: \underset{i \in \N}\bigcup \N^i \to \sP(E)$ such that
\begin{enumerate}
\item the image of $\cW$ consists of absolutely convex subsets;
\item $\cW(\void) = E$;
\item given a finite sequence $(n_0, \ldots, n_i)$, then 
\[ \cW(n_0, \ldots , n_i) = \bigcup_{n \in \N} \cW(n_0, \ldots, n_i, n). \]
\item for every sequence $s: \N \to \N$, $\exists (\la_i)_{i \in \N} \in (0, 1)^{\N}$ such that for all sequences $(x_i)_{i \in \N}$ with $x_i \in \cW(s(0), \ldots , s(i))$ the series
\[ \sum_{i = 1}^\infty \mu_i x_i \]
converges in $E$ when $\mu_i \in k$ with $|\mu_i| \le \la_i$ for all $i \in \N$.
\end{enumerate}
A Hausdorff locally convex space $E$ that carries a topological web is called
\emph{webbed locally convex space}.
\end{defn}

\begin{defn}
A web $\cW$ on $E$ is said
\begin{itemize}
\item \emph{ordered} if given two sequence $s: \N \to \N$, $s': \N \to \N$ such that $s \le s'$ then $\cW(s(0), \dots, s(i)) \subset \cW(s'(0), \dots, s'(i))$ for each $i \in \N$;
\item \emph{strict} if the scalars $(\mu_i)_{i \in \N}$ in (4) of Definition \ref{defn:top_webs} can be chosen such that for all $j \in \N$ the series
\[ \sum_{i = j}^\infty \mu_i x_i \]
converges to and element of $\cW(s(0), \dots, s(j))$.
\end{itemize}
\end{defn}


\paragraph{\textbf{Analytic geometry}}

Here we establish the notation we will use for the algebras used in analytic geometry. Our approach to dagger analytic geometry will be akin to the one discussed in \cite{Bam}.

\begin{defn}
	The algebras of convergent power-series on closed polydisks of polyradius $\rho = (\rho_1, \dots, \rho_n)$ are denoted by 
	\[ T_k^n(\rho) \doteq \begin{cases}
	 \{ \underset{i \in \N^n}\sum  a_i X_i \in k \ldbrack X \rdbrack | \underset{i \in \N^n}\sum |a_i| \rho^i < \infty  \}  \ \ \text{if $k$ is Archimedean} \\
	 \{  \underset{i \in \N^n}\sum a_i X_i \in k \ldbrack X \rdbrack | |a_i| \rho^i \to 0, \text{ for } |i| \to \infty  \} \ \ \text{if $k$ is non-Archimedean }
	\end{cases},
	 \] 
	 where $X = (X_i)$ is a multi-variable and $i$ is a multi-index. $T_k^n(\rho)$ will be considered as a Banach algebra equipped with the norm
	\[ \begin{cases}
	  \underset{i \in \N^n}\sum |a_i| \rho^i < \infty   \ \ \text{if $k$ is Archimedean} \\
	 \underset{i \in \N^n} \max \{ |a_i| \rho_i \} \ \ \text{if $k$ is non-Archimedean }
	\end{cases}.
	 \] 
\end{defn}

If $k$ is non-Archimedean it is meaningful to develop a theory of affinoid algebras and affinoid spaces. We recall here the few concepts of that theory we will mainly use.

\begin{defn}
For a non-Archimedean $k$, we say that a topological or bornological $k$-algebra $A$ is an \emph{affinoid algebra} if
\[ A \cong \frac{T_k^n(\rho)}{I}, \]
for some $n \in \N$, some polyradius $\rho$ and an ideal $I \subset T_k^n(\rho)$, where on $\frac{T_k^n(\rho)}{I}$ is considered the quotient norm induced by $T_k^n(\rho)$ with its associated topology or canonical bornology.
\end{defn}

The dual category of the category of affinoid algebras is equivalent to the category of affinoid spaces. These affinoid spaces are used as building blocks of non-Archimedean analytic geometry. The affinoid space associated to the affinoid algebra $A$ will be denoted with $\cM(A)$ (implicitly referring to its representation by the Berkovich spectrum). 

\begin{defn}
Let $A$ be an affinoid $k$-algebra. One can define the algebra
\[ A \lt \rho^{-1} X \gt = \{ \underset{i \in \N^n} \sum a_i X_i \in A \ldbrack X \rdbrack | |a_i| \rho^i \to 0 \} \]
and for each $f_1, \dots, f_n \in A$ we can consider the morphism
\[ A \to \frac{A \lt \rho^{-1} X \gt}{(X_1 - f_1, \dots, X_n - f_n)} = A \lt f_1, \dots, f_n \gt. \]
We call it a \emph{Weierstrass localization}. The dual map of a Weierstrass localization $\cM(A \lt f_1, \dots, f_n \gt) \to \cM(A)$ is called a \emph{Weierstrass subdomain embedding} and it defines an admissible open subset of $\cM(A)$ with respect to the so-called weak G-topology.
\end{defn}

Without any restriction on $k$ one can develop the theory of dagger affinoid algebras, as did in \cite{Bam}. We recall here the basic definitions.

\begin{defn}
The algebras of overconvergent power-series on closed polydisks of polyradius $\rho = (\rho_1, \dots, \rho_n)$ are denoted by 
\[ W_k^n(\rho) = \limind_{\rho' > \rho } T_k^n(\rho'). \]
On $W_k^n(\rho)$ we consider the direct limit (LB) bornology.
\end{defn}

\begin{defn}
We say that a bornological $k$-algebra $A$ is a \emph{dagger affinoid algebra} if
\[ A \cong \frac{W_k^n(\rho)}{I}, \]
for some $n \in \N$, some polyradius $\rho$ and an ideal $I \subset W_k^n(\rho)$, where on $\frac{W_k^n(\rho)}{I}$ is considered the quotient bornology induced by $W_k^n(\rho)$.
\end{defn}

\begin{rmk}
In the previous work of the author, \cite{Bam}, \cite{Bam2}, \cite{BaBe}, \cite{BaBeKr}, dagger affinoid algebras were always considered as bornological algebras. We refer to those works for an explanation of why it is usually better to consider them as bornological algebras instead of topological algebras. On the contrary, in this work, we also consider on them the canonical LB topology, because we would like, for the sake of completeness, to prove our vanishing results on the cohomology in both cases: when coherent sheaves over a dagger affinoid space are considered are sheaves of bornological spaces and when they are considered as sheaves of locally convex spaces. It is also a more natural way to work out our proof and link it with existing results in literature. The canonical LB topology on dagger affinoid algebra can be obtained applying the functor ${}^t$ described in Definition \ref{defn:t_b_functors}. Notice also that for any dagger affinoid algebra $A$ one has that $A \cong ((A)^t)^b$.
\end{rmk}

Again, the dual category of the category of dagger affinoid algebras is equivalent to the category of dagger affinoid spaces. Dagger affinoid spaces are the building blocks of dagger analytic geometry and the dagger affinoid space associated with to the affinoid algebra $A$ will be denoted with $\cM(A)$. Finally, also for dagger affinoid algebras we can introduce Weierstrass localizations.

\begin{defn}
Let $A$ be a dagger affinoid $k$-algebra. One can define the algebra
\[ A \lt \rho^{-1} X \gt^\dagger = W_k^n(\rho) \wotimes_k A \]
and for each $f_1, \dots, f_n \in A$ we can consider the morphism
\[ A \to \frac{A \lt \rho^{-1} X \gt^\dagger}{(X_1 - f_1, \dots, X_n - f_n)} = A \lt f_1, \dots, f_n \gt^\dagger. \]
We call it a \emph{(dagger) Weierstrass localization}. The dual map of a (dagger) Weierstrass localization $\cM(A \lt f_1, \dots, f_n \gt) \to \cM(A)$ is called a \emph{(dagger) Weierstrass subdomain embedding} and it defines an admissible open subset of $\cM(A)$ with respect to the weak G-topology.
\end{defn}

\section{Derived functors of the projective limit functor} \label{sec:der_lim}

In this section we recall the main facts about the main tools we use in our proof of Theorem B for dagger quasi-Stein spaces: the derived functors of the projective limit functor in the context of quasi-abelian categories. In the first section we recall the general theory about the derivation of the projective limit functors of quasi-abelian categories from \cite{Pr}. Then, we discuss criteria for the vanishing of the higher derived functors in the category of locally convex spaces and in the category of bornological vector spaces.

\subsection{Projective limits in quasi-abelian categories}

In this section $\bC$ will denote a quasi-abelian category. Here we mainly recall results and definitions from \cite{Pr}. 

\begin{defn}
Let $I$ be a small category and $F: I^{\op} \to \bC$ an additive functor. The \emph{Roos complex} of $F$ is defined to be the complex in degree $\ge 0$ given by
\[ Roos(F) \doteq (\Roos^n(F) \doteq \prod_{i_0 \to i_1 \to \dots \to i_n} F(i_0))_{n \in \N} \]
where $i_0 \to i_1 \to \dots \to i_n$ is a chain of composable morphisms of $I$. We refer the reader to the Definition 3.2.1 of \cite{Pr} for the (quite long) definition of the differentials of this complex, since it is not crucial for our exposition.
\end{defn}

\begin{defn}
An object $F \in \bC^{I^{\op}}$ is called \emph{Roos acyclic} if its Roos complex is strictly quasi-isomorphic to a complex concentrated in degree $0$.
\end{defn}

From now on we will always suppose that $\bC$ has exact products.

\begin{prop} \label{prop:inv_prosmans}
	Let $I$ be a small category and let $\bC$ has exact products. Then, the family of objects in $\bC^{I^{op}}$ which are Roos-acyclic form a $\underset{i\in I}\limpro$-injective subcategory. In particular, the functor 
	\[ \limpro_{i\in I}:\bC^{I^{op}} \to \bC \]
	is right derivable to a functor 
	\[ D^{\ge 0}(\bC^{I^{op}} ) \to D^{\ge 0}(\bC ) \]
	and for any object $V_\bullet \in \bC^{I^{op}}$, we have a canonical isomorphism 
	\begin{equation}\label{eqn:Derived2Roos}
	\R \limpro_{i\in I} V_{i}\cong \Roos(V)
	\end{equation}
	where the right hand side is the Roos complex of $V$.
\end{prop}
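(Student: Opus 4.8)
The plan is to establish the three defining conditions of a $\limpro$-injective subcategory for the class of Roos-acyclic objects, and then to read off the derived-functor statement and the Roos formula from the general machinery of \cite{Pr} and \cite{ScQA}. I would organize the argument around the Roos complex as an explicit resolution-type device, exploiting that $\bC$ has exact products throughout.

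First I would verify the \emph{embedding condition}: for every $V_\bullet \in \bC^{I^{\op}}$ there is a strict monomorphism into a Roos-acyclic object. The natural candidate is the object $\prod_{i \to j} V(i)$ (or a suitable ``induced'' diagram built from products over comma categories), and the point is that diagrams of this product-type form are Roos-acyclic precisely because the Roos complex of such an object admits an explicit contracting homotopy. This is where the hypothesis of exact products is indispensable: it guarantees both that the relevant products exist and that the homotopy, which is assembled out of projection and insertion maps, consists of strict morphisms and computes a strictly exact (hence, by Remark \ref{rmk:LH_0}, $LH$-acyclic) complex. I expect this embedding/homotopy construction to be the \textbf{main obstacle}, since one must check that the co-augmentation $V_\bullet \to \prod_{i \to j}V(i)$ is a strict monomorphism in $\bC^{I^{\op}}$ and that the induced diagram is genuinely Roos-acyclic and not merely quasi-isomorphic in the non-strict sense.

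Next I would check the two \emph{closure conditions}. Given a short strictly exact sequence $0 \to X \to X' \to X'' \to 0$ in $\bC^{I^{\op}}$ with $X, X'$ Roos-acyclic, one applies the Roos-complex functor termwise; since each $\Roos^n$ is built from products and products are exact, the sequence of Roos complexes is a short strictly exact sequence of complexes, inducing a long exact sequence in $LH^\bullet$. Using Remark \ref{rmk:LH_0} to translate ``Roos-acyclic'' into ``$LH^n(\Roos(-)) = 0$ for $n \ge 1$'', the long exact sequence forces $X''$ to be Roos-acyclic as well. The same long-exact-sequence argument, now with all three objects Roos-acyclic, shows that $0 \to \limpro X \to \limpro X' \to \limpro X'' \to 0$ is strictly exact, because $\limpro$ is recovered as $LH^0(\Roos(-))$ (i.e. the kernel of the first Roos differential) and the higher $LH$ terms vanish. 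This establishes the third axiom of an $F$-injective subcategory for $F = \limpro$.

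Finally, having produced a $\limpro$-injective subcategory, the existence and computation of the derived functor follow formally. By the general derivation theorem for left exact functors relative to an $F$-injective subcategory (cf. Definition 1.3.1 of \cite{ScQA}), $\limpro$ is right derivable to a functor $D^{\ge 0}(\bC^{I^{\op}}) \to D^{\ge 0}(\bC)$, and $\R\limpro V_\bullet$ may be computed by applying $\limpro$ to any Roos-acyclic resolution of $V_\bullet$. The canonical isomorphism \eqref{eqn:Derived2Roos} then comes from the standard fact that the Roos complex $\Roos(V)$ itself functions as such a resolution: its degree-zero cohomology is $\limpro V_i$ and, for the product-type building blocks, its higher cohomology vanishes, so $\Roos(V) \cong \R\limpro_{i} V_i$ in $D^{\ge 0}(\bC)$. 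I would cite Prosmans \cite{Pr} (Definition 3.2.1 and the surrounding results) for the precise identification of the differentials and for the compatibility of this construction with the derived limit, rather than re-deriving the homotopy formulas in detail here.
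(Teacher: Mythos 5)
Your outline is correct, but note that the paper does not prove this proposition at all: its ``proof'' is the single line ``See Proposition 3.3.3 of \cite{Pr}.'' What you have written is a faithful reconstruction of Prosmans' argument (strict split monomorphism into the product-type coinduced diagrams, Roos-acyclicity of those via a contracting homotopy, the two closure axioms via exactness of products and the long exact sequence in $LH^\bullet$, and the identification $\R\limpro \cong \Roos$ from the Roos resolution), so it takes essentially the same approach as the cited source and contains no gap at this level of detail.
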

{\bf Proof.} 
See Proposition 3.3.3 of \cite{Pr}.
\hfill $\Box$

In the case when $I = \N$ the Roos complex is particularly simple to calculate.

\begin{lemma}\label{lem:RoosML}
	Let $\bC$ be a quasi-abelian category with exact products. Then, for a functor $V: \N^{op} \to \bC$ the Roos complex is strictly isomorphic to the complex 
	\begin{equation} \label{eqn:Weibel} 
	0 \to \limpro_{i \in \mathbb{N}}V_{i} \to \prod_{i=0}^{\infty} V_{i} \stackrel{\Delta}\longrightarrow \prod_{i=0}^{\infty} V_{i} \to 0
	\end{equation}
	where 	
\[ \Delta = (\dots, id_{V_2} - \pi_{2, 3}, id_{V_1} - \pi_{1, 2}, id_{V_0} - \pi_{0, 1})
\]
	and $\pi_{i, i+1}: V_{i + 1} \to V_i$ denote the system morphisms of $V$.
\end{lemma}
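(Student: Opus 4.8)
The plan is to compute the Roos complex for the index category $I = \N^{\op}$ directly from its definition and show that it collapses to the three-term complex in \eqref{eqn:Weibel}. The key simplification is that the poset $\N$ is totally ordered, so the chains of composable morphisms that index the products $\Roos^n(V)$ are highly constrained. First I would analyze what $\Roos^n(V)$ looks like for each $n$: a chain $i_0 \to i_1 \to \dots \to i_n$ in $\N^{\op}$ corresponds to a non-increasing sequence $i_0 \ge i_1 \ge \dots \ge i_n$ of natural numbers (equivalently a non-decreasing chain in $\N$, depending on orientation conventions). Because all morphisms in $\N$ are determined by their source and target, and because of the degeneracies coming from identity maps, the standard reduction to the \emph{normalized} (reduced) Roos complex applies: one may replace the full cochain complex by the subcomplex indexed only by chains of \emph{strictly} monotone morphisms, with the products indexed by strictly decreasing chains $i_0 > i_1 > \dots > i_n$.

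The crucial combinatorial observation is that in the totally ordered poset $\N$ there are essentially no non-degenerate chains of length $\ge 2$ that survive. More precisely, after normalization the only contributions come from $n = 0$ (the product $\prod_i V_i$ over objects) and $n = 1$ (the product over edges $i \to i+1$, since every strict relation factors through consecutive steps and the normalized complex only sees these), giving two copies of $\prod_{i} V_i$, while all $\Roos^n$ for $n \ge 2$ become acyclic/contractible and do not contribute to the cohomology. I would make this precise by exhibiting an explicit strict isomorphism (or a contracting homotopy on the acyclicity of the tail) between the normalized Roos complex and the complex \eqref{eqn:Weibel}, using that $\bC$ has exact products so that the relevant products exist and the differentials are morphisms of $\bC$. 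The differential in degree $0$ is then identified, up to the sign/indexing conventions of Prosmans' differential, with the map $\Delta$ whose components are $\id_{V_i} - \pi_{i,i+1}$, and the kernel of $\Delta$ is by definition $\limpro_{i \in \N} V_i$.

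Concretely, the steps would be: (1) recall from Definition 3.2.1 of \cite{Pr} the formula for the differentials of $\Roos(V)$ and specialize it to $I = \N^{\op}$; (2) pass to the normalized complex, justifying that degenerate summands (those chains containing an identity morphism) split off as an acyclic direct summand, so that the normalized complex is strictly quasi-isomorphic — in fact strictly isomorphic after the standard splitting — to the original one; (3) observe that the normalized term in degree $n$ is the product over strictly decreasing chains of length $n$, and check that for $n \ge 2$ these assemble into a strictly exact (acyclic) complex, so only degrees $0$ and $1$ remain; (4) read off that the degree-$0$ and degree-$1$ terms are both $\prod_{i=0}^\infty V_i$ and that the surviving differential is exactly $\Delta$; (5) identify $\ker \Delta$ with $\limpro_{i\in\N} V_i$ by the universal property of the projective limit, which inserts the leftmost term of \eqref{eqn:Weibel} and makes the map $\limpro V_i \to \prod V_i$ a strict monomorphism.

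The main obstacle I anticipate is step (3), the verification that the higher normalized terms genuinely contribute nothing — i.e.\ that the truncated tail of the normalized Roos complex in degrees $\ge 1$ computes only the single cokernel map $\Delta$ rather than higher cohomology. In the classical abelian setting this is the standard fact (Weibel) that for a tower indexed by $\N$ the derived limit is concentrated in degrees $0$ and $1$; here the care is entirely in the word \emph{strict}: one must check that the relevant isomorphisms and the splitting of degenerate summands are strict morphisms in the quasi-abelian category $\bC$, and that exactness of products guarantees the splittings are preserved. I would lean on Remark \ref{rmk:LH_0} to phrase acyclicity of the discarded tail as the vanishing of all $LH^n$, and on the hypothesis that $\bC$ has exact products to ensure that forming these (possibly infinite) products commutes with the strict-exactness checks, so that the collapse to \eqref{eqn:Weibel} is an isomorphism of complexes and not merely a quasi-isomorphism.
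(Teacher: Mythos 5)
The combinatorial reduction at the heart of your argument does not work as described, and it is precisely there that the paper has to invoke an external result. Normalization (splitting off the chains containing an identity morphism) does not reduce the degree-one term of the Roos complex to the product over \emph{consecutive} pairs: after normalization the term in degree $n$ is the product of $V_{i_0}$ over all strictly monotone chains $i_0, i_1, \dots, i_n$, so in degree $1$ you still have a product over arbitrary (not just adjacent) pairs, and in every degree $n \ge 2$ the term is a nonzero product over chains of length $n$. The claim that ``every strict relation factors through consecutive steps and the normalized complex only sees these'' is false: factoring a morphism $i \to j$ with $j > i+1$ through intermediate objects produces a \emph{longer} chain, not an identification of indexing sets, and the normalized complex retains all non-degenerate chains. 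Consequently the collapse to a two-term complex cannot be an isomorphism of complexes, normalized or not; it is only an isomorphism in $D^{\ge 0}(\bC)$, and the entire substance of the lemma is the vanishing of $LH^n$ of the Roos complex for $n \ge 2$ together with the identification of $LH^0$ and $LH^1$ with the kernel and cokernel of $\Delta$. Your step (3) correctly flags this as the main obstacle but supplies no mechanism for it beyond the incorrect normalization claim.

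The paper fills exactly this gap by citing Proposition 5.2.3 of \cite{Pr}: since $\N$ is countable, the derived projective limit functor has cohomological dimension $\le 1$, i.e.\ $LH^{n}(\R \limpro_{i} V_{i})=0$ for all $n\geq 2$; combined with the isomorphism \eqref{eqn:Derived2Roos} and the low-degree identification coming from the definition of the Roos differentials, this yields \eqref{eqn:Weibel}. If you want a self-contained argument you would need either to reproduce a Mitchell--Goblot-type bound in the quasi-abelian setting, or to exhibit an explicit strict quasi-isomorphism from the Roos complex onto \eqref{eqn:Weibel} (for instance, projection onto the components indexed by the morphisms $i \to i+1$ in degree $1$ and zero in higher degrees, which one checks is a chain map compatible with $\Delta$) and then prove it induces isomorphisms on all $LH^n$, using exactness of products to control strictness. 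Neither is provided by your outline, so as written the proof does not go through.
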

{\bf Proof.}
Since the cardinality of the set of natural numbers is less than the second infinite cardinal, Proposition 5.2.3 of \cite{Pr} implies that $LH^{n}(\R \underset{i\in I}\limpro V_{i})=0$ for all $n\geq 2$. The fact that $\R \underset{i\in I}\limpro V_{i}$ is represented by (\ref{eqn:Weibel}) follows from the general definition of the Roos complex, specialized with $I = \N$, and the isomorphism in (\ref{eqn:Derived2Roos}).  
\hfill $\Box$

\begin{cor} \label{cor:Weibel}
Let $\{ V_{i} \}_{i \in \N}$ be a projective system in $\bC$, then  $\R \underset{i\in \N}\limpro V_{i} \cong \underset{i\in \N} \limpro V_{i}$ if and only if the $\Delta$ map of the complex (\ref{eqn:Weibel}) is a strict epimorphism.
\end{cor}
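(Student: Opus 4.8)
The plan is to deduce both implications from the explicit two-term model of $\R\limpro_{i\in\N} V_i$ furnished by Lemma \ref{lem:RoosML}, combined with the strict-exactness criterion of Remark \ref{rmk:LH_0}. By Proposition \ref{prop:inv_prosmans} and Lemma \ref{lem:RoosML}, the object $\R\limpro_{i\in\N} V_i$ of $D^{\ge 0}(\bC)$ is represented by the complex $C^\bullet \doteq [\,\prod_{i} V_i \xrightarrow{\Delta} \prod_{i} V_i\,]$ placed in degrees $0$ and $1$, and $\limpro_{i\in\N} V_i = \ker\Delta = LH^0(C^\bullet)$ is its degree-zero cohomology. Since $\Delta$ kills the kernel inclusion $k\colon \limpro_{i\in\N} V_i \to \prod_{i} V_i$, the latter defines a canonical morphism of complexes $\iota\colon (\limpro_{i\in\N} V_i)[0] \to C^\bullet$, and the assertion $\R\limpro_{i\in\N} V_i \cong \limpro_{i\in\N} V_i$ is, by definition, the statement that $\iota$ is an isomorphism in $D^{\ge 0}(\bC)$, i.e. a strict quasi-isomorphism.

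First I would use the fact that $\iota$ is a strict quasi-isomorphism if and only if its mapping cone is strictly exact. Up to a shift and a sign this cone is exactly the sequence (\ref{eqn:Weibel}), so by Remark \ref{rmk:LH_0} the condition $\R\limpro_{i\in\N} V_i \cong \limpro_{i\in\N} V_i$ is equivalent to the strict exactness of (\ref{eqn:Weibel}). It then remains to check, termwise, that (\ref{eqn:Weibel}) is strictly exact precisely when $\Delta$ is a strict epimorphism. At the term $\limpro_{i\in\N} V_i$ and at the first factor $\prod_{i} V_i$ strict exactness is automatic: $k$ is a kernel, hence a strict monomorphism, and its image is $\ker\Delta$ by construction. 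Hence the whole condition collapses to strict exactness at the last $\prod_{i} V_i$, where the outgoing map is zero; this says exactly that $\im\Delta = \prod_{i} V_i$ with $\Delta$ strict, that is, that $\Delta$ is a strict epimorphism. Reading the chain of equivalences in both directions gives the corollary.

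The hard part will be the bookkeeping identifying the mapping cone of $\iota$ with (\ref{eqn:Weibel}) and verifying that strict exactness at the first two spots is free, so that the entire condition reduces to the behaviour of the final differential. A second point demanding care is that the relevant property at the last term is strict epimorphism, not mere epimorphism; this is precisely what the strict-exactness formulation of Remark \ref{rmk:LH_0} supplies, and it explains why one must ask the $\Delta$ of (\ref{eqn:Weibel}) to be a strict epimorphism rather than only an epimorphism.
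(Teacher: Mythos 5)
Your proof is correct and takes essentially the same route as the paper, whose entire argument is ``immediate consequence of Lemma \ref{lem:RoosML}'': you identify $\R\underset{i\in\N}\limpro V_i$ with the two-term complex from that lemma and observe that strict exactness of (\ref{eqn:Weibel}) is automatic except at the last term, where it amounts to $\Delta$ being a strict epimorphism. The only content you add beyond the paper is the explicit mapping-cone bookkeeping, which is carried out correctly.
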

\begin{proof}
Immediate consequence of Lemma \ref{lem:RoosML}.
\end{proof}

For the sake of clarity we put the following easy lemma, to be used later on.

\begin{lemma} \label{lemma:complexes}
Let $(V_\bullet^n)_{n \in \N}$ be a complex of elements of $\bC^{I^\op}$, such that for each $n \in \N$ the functor $\underset{i \in I}\limpro V_i^n: I^\op \to \bC$ is Roos acyclic. Then,
\[ L H^n(\limpro_{i \in I} V_i^\bullet) \cong \limpro_{i \in I} (L H^n (V_i^\bullet)) \]
is an isomorphism for each $n \in \N$.
\end{lemma}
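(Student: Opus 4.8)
The plan is to exploit the fact that, under the Roos-acyclicity hypothesis, both sides are computed by the explicit Weibel-type complex from Lemma \ref{lem:RoosML}. First I would reduce to the case $I = \N$ in spirit by noting that the Roos complex computes $\R\limpro$ (Proposition \ref{prop:inv_prosmans}), so the hypothesis that each $\limpro_{i \in I} V_i^n$ is Roos acyclic says precisely that $\R\limpro_{i \in I} V_i^n \cong \limpro_{i \in I} V_i^n$ in degree $0$, \ie the higher derived functors of $\limpro_{i \in I}$ vanish on each term $V_\bullet^n$. In the language of the left heart $LH(\bC)$, this means the functor $\limpro_{i \in I}: \bC^{I^\op} \to \bC$, viewed through its derived functor, sends each $V_i^n$ to a complex concentrated in degree $0$.

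The key step is then a standard spectral-sequence or exactness-of-the-derived-functor argument. Since $\R\limpro_{i \in I}$ is an exact functor on the derived category, it commutes with the cohomology functors $LH^n$ precisely when the higher derived functors vanish. Concretely, I would consider the double complex obtained by applying the Roos complex construction termwise to the complex $V_\bullet^\bullet \in (\bC^{I^\op})^{\N}$, and run the spectral sequence of this double complex. The hypothesis kills one of the filtration directions: because each $\limpro_{i \in I} V_i^n$ is Roos acyclic, the partial Roos cohomology of each column collapses to degree $0$, so the spectral sequence degenerates and yields
\[ LH^n\!\left(\limpro_{i \in I} V_i^\bullet\right) \cong \limpro_{i \in I}\!\left(LH^n(V_i^\bullet)\right). \]
Here I use Remark \ref{rmk:LH_0} to translate between strict exactness of complexes and vanishing of the $LH^n$, and Proposition \ref{prop:inv_prosmans} to identify derived $\limpro$ with the Roos complex throughout.

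The main obstacle I anticipate is making the spectral-sequence degeneration rigorous in the quasi-abelian setting, where one cannot naively manipulate the abelian-category machinery. The clean way around this is to pass to the left heart $LH(\bC)$: since $\bC$ embeds into the abelian category $LH(\bC)$ and $D(\bC) \simeq D(LH(\bC))$, the functor $\R\limpro_{i \in I}$ can be computed entirely in $D^+(LH(\bC))$, where the ordinary Grothendieck spectral sequence for the composition of $\limpro$ with cohomology is available. The Roos-acyclicity hypothesis guarantees that each $V_i^n$ is $\limpro$-acyclic in $LH(\bC)$, which is exactly the condition needed for the edge-map/degeneration argument. Translating back via Remark \ref{rmk:LH_0} then gives the stated isomorphism. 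The only care needed is to check that the acyclicity of the individual terms $V_i^n$ indeed suffices for the hypercohomology spectral sequence to degenerate on the relevant page, but this is the classical \emph{``acyclic resolution computes derived functors''} principle applied to the column filtration.
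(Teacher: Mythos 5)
Your strategy---totalize the termwise Roos complexes into a double complex and compare filtrations in the left heart---is genuinely different from the paper's, which instead splits $\limpro_{i\in I} V_i^\bullet$ into short strictly exact sequences and invokes Lemma 3.81 of \cite{BaBeKr}. But as written your argument has a real gap: you only run one of the two spectral sequences. The column filtration, together with the Roos-acyclicity of each term $V^n$, does degenerate and identifies the cohomology of the total complex with $LH^n(\limpro_{i\in I} V_i^\bullet)$; this is the correct use of the ``acyclic objects compute the derived functor'' principle. The right-hand side of the lemma, $\limpro_{i\in I} LH^n(V_i^\bullet)$, however, comes from the \emph{other} filtration. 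Its second page (after using exactness of products to commute $LH^q$ past $\Roos^p$) consists of the $p$-th right derived functors of $\limpro_{i\in I}$ applied to the cohomology diagrams $i \mapsto LH^q(V_i^\bullet)$, and its degeneration requires the Roos-acyclicity of \emph{those} diagrams---a condition that is not among the hypotheses and does not follow from acyclicity of the terms. This is exactly the classical obstruction visible in the Milnor sequence for towers of abelian groups,
\[ 0 \to {\limpro}^1\, LH^{n-1}(V_i^\bullet) \to LH^n(\limpro_{i} V_i^\bullet) \to \limpro_{i}\, LH^n(V_i^\bullet) \to 0, \]
where all terms of the complex may be $\limpro$-acyclic (e.g.\ towers of surjections) while the correction term is nonzero. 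Acyclicity of the terms alone therefore yields only an edge morphism $LH^n(\limpro_{i\in I} V_i^\bullet) \to \limpro_{i\in I} LH^n(V_i^\bullet)$, not an isomorphism.

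To close the gap you must either add the hypothesis that the diagrams $LH^q(V^\bullet)$ are themselves Roos-acyclic, or note that in the intended application (the proof of Theorem \ref{thm:kiehl}, where each complex $V_i^\bullet$ is strictly exact, so $LH^q(V_i^\bullet) \cong 0$) the offending $E_2$-terms vanish for trivial reasons. The paper's own one-line proof, via splitting into short strictly exact sequences, faces the same subtlety and defers it to the cited external lemma; your double-complex route is serviceable, but the degeneration of the second spectral sequence must be justified rather than asserted.
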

\begin{proof}
It follows easily by splitting the complex $\underset{i \in I}\limpro V_i^\bullet$ in strictly short exact sequences and apply Lemma 3.81 of \cite{BaBeKr}.
\end{proof}

\begin{lemma} \label{lemma:pro_object}
	Let $\{ V_i \}_{i \in I}$ and $\{ W_j \}_{j \in J}$ be two projective systems such that  $\underset{i \in I}{``\limpro"} V_i \cong \underset{j \in J}{``\limpro"} W_j$ as objects of $\bPro(\bC)$. Then
	\[  \R \underset{i \in I}{\limpro} V_i \cong \R \underset{j \in J}{\limpro} W_j \]
	in $D^{\ge 0}(\bC)$.
\end{lemma}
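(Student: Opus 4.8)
The plan is to reduce the statement to the invariance of the Roos complex under pro-isomorphism. By Proposition \ref{prop:inv_prosmans} we have canonical isomorphisms $\R\limpro_{i\in I} V_i \cong \Roos(V)$ and $\R\limpro_{j\in J} W_j\cong\Roos(W)$ in $D^{\ge 0}(\bC)$, so it suffices to produce an isomorphism $\Roos(V)\cong\Roos(W)$ in $D^{\ge 0}(\bC)$ depending only on the isomorphism class of $\underset{i \in I}{``\limpro"} V_i$ in $\bPro(\bC)$. In other words, the heart of the matter is to upgrade the assignment $V\mapsto\Roos(V)$ to a functor $\bPro(\bC)\to D^{\ge 0}(\bC)$, after which the given pro-isomorphism immediately yields mutually inverse morphisms of the associated objects.

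To this end I would first record the elementary functoriality of the Roos complex at the level of diagrams. A natural transformation $V\to V'$ of functors $I^{\op}\to\bC$ induces, through the maps on the factors $V(i_0)\to V'(i_0)$, a morphism of complexes $\Roos(V)\to\Roos(V')$; and a functor $\phi\colon J\to I$ together with the identification $W=V\circ\phi$ induces, by restricting the product over chains in $I$ to those chains lying in the image of $\phi$, a restriction morphism $\Roos(V)\to\Roos(V\circ\phi)$. The crucial input is that when $\phi$ is \emph{cofinal} this restriction map is a strict quasi-isomorphism: this is the cofinality invariance of the derived projective limit, which under the hypothesis that $\bC$ has exact products I would deduce from the identification of $\R\limpro$ with the Roos complex together with the standard cofinality comparison at the level of Roos complexes (detecting the strict quasi-isomorphism via $LH^n$ as in Remark \ref{rmk:LH_0}).

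With these pieces in hand, the given isomorphism $\underset{i \in I}{``\limpro"} V_i \cong \underset{j \in J}{``\limpro"} W_j$ is represented, after passing to a common cofiltered index category $K$ equipped with cofinal functors $p\colon K\to I$ and $q\colon K\to J$ (the standard common refinement of two isomorphic pro-systems), by a natural isomorphism $V\circ p\cong W\circ q$ of diagrams $K^{\op}\to\bC$. This produces a zig-zag
\[ \Roos(V)\to\Roos(V\circ p)\xrightarrow{\ \sim\ }\Roos(W\circ q)\leftarrow\Roos(W), \]
whose outer arrows are the restriction morphisms attached to $p$ and $q$ and whose middle arrow comes from the natural isomorphism. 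By the cofinality statement the outer arrows are strict quasi-isomorphisms, so inverting them in $D^{\ge 0}(\bC)$ yields the desired isomorphism $\R\limpro_{i\in I} V_i\cong\R\limpro_{j\in J} W_j$.

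The step I expect to be the main obstacle is precisely the assertion that $\R\limpro$ descends to a functor on $\bPro(\bC)$, namely the combination of the cofinality invariance of the Roos complex and the reduction of an isomorphism of pro-systems to a level isomorphism over a common cofinal index category. Both facts are classical in the theory of pro-objects, but they have to be checked to be compatible with the Quillen exact (rather than abelian) structure of $\bC$, so that the comparison maps are genuinely strict quasi-isomorphisms; this is where the exact-products hypothesis and Remark \ref{rmk:LH_0} are used to guarantee that the constructed morphisms are isomorphisms in $D^{\ge 0}(\bC)$.
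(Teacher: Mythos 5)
Your overall skeleton --- identify $\R\limpro$ with the Roos complex, make the Roos complex functorial for natural transformations and for reindexing along cofinal functors, and then represent the given pro-isomorphism at the level of diagrams --- is the right kind of argument (the paper itself simply cites Corollary 3.7.3 of \cite{Pr} for this lemma). The cofinality step is fine modulo a proof. But there is a genuine gap in the representation step: it is \emph{not} true that an isomorphism in $\bPro(\bC)$ can be represented, after passing to a common cofiltered index category with cofinal projections, by a levelwise \emph{isomorphism} $V\circ p\cong W\circ q$. The classical strictification results for pro-objects (Artin--Mazur, SGA~4) give only a level \emph{morphism} representing a given map of pro-objects; being an isomorphism of pro-objects means that this level map admits an inverse ``up to shifting the index'', not that each component is invertible. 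A concrete counterexample, already indexed by $\N$: the system $\cdots\xrightarrow{0}\Z\xrightarrow{0}\Z$ with zero transition maps is isomorphic to the zero pro-object, yet no cofinal reindexing produces a levelwise isomorphism with the zero system, since $\Z\not\cong 0$. (The conclusion of the lemma still holds there: the Roos differential $\Delta$ is the identity of $\prod\Z$, so $\R\limpro$ vanishes.)

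Consequently the middle arrow of your zig-zag does not exist as stated, and the real content of the lemma is exactly what your argument skips: one must show that a level morphism of systems which becomes an isomorphism in $\bPro(\bC)$ induces an isomorphism on Roos complexes in $D^{\ge 0}(\bC)$. This needs an actual argument --- for instance, showing that a system pro-isomorphic to zero has strictly exact Roos complex and then passing through the long exact sequences in the left heart for the kernel and cokernel systems, or working directly with the ``inverse up to shift'' data --- and it is here that the exact-products hypothesis and Remark \ref{rmk:LH_0} do genuine work. Your functoriality of $\Roos$ and the cofinality invariance are correct ingredients, but as written the proposal establishes only the much weaker statement for systems that are levelwise isomorphic after cofinal reindexing.
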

\begin{proof}
	See Corollary 3.7.3 of \cite{Pr}.
\end{proof}

From next section we will only consider projective systems indexed by $\N$.
Therefore, we fix once for all the notation for a projective system $\{V_n \}_{n \in \N}$: With
\[ \pi_i: \limpro_{n \in \N} V_n \to V_i \]
we denote the canonical morphisms and with
\[ \pi_{i,j}: V_j \to V_i \]
we denote the system morphisms for $i,j \in \N$.

\subsection{Projective limits of locally convex and bornological vector spaces}
The main reference for the functional analytic results of this section is \cite{Wen}. Indeed, the main purpose of this section is to re-interpret the results proved there in the context of quasi-abelian category theory and to show that the proofs easily adapt to encompass also the case when $k$ is a non-Archimedean, non-trivially valued field.


\begin{prop} 
	Both $\bLoc_k$ and $\bBorn_k$ have the following properties:
	\begin{enumerate}
	\item they are quasi-abelians;
	\item they have exact products;
	\item the forgetful functors $\bLoc_k \to \bSet_k$ and $\bBorn_k \to \bSet_k$ commute with limits;
	\item limits of complete objects are complete objects;
	\item strict epimorphisms are srujective maps that induces the quotient topology or the quotient bornology.
	\end{enumerate}
\end{prop}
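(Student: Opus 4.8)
The plan is to verify the five listed properties of $\bLoc_k$ and $\bBorn_k$ essentially one at a time, since each is a standard structural fact about categories of topological or bornological vector spaces, and the statement bundles them together for later reference rather than asserting a single deep result. The strategy throughout is to reduce everything to the concrete description of kernels, cokernels, and products in these categories, and to use the adjunction between the functors ${}^t$ and ${}^b$ of Definition \ref{defn:t_b_functors} to transfer facts between the locally convex and the bornological side whenever convenient.

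First I would recall that a category is quasi-abelian when it is additive, has kernels and cokernels, and the class of strict morphisms is stable under pullback and pushout; for $\bLoc_k$ and $\bBorn_k$ the additive structure and the existence of kernels/cokernels are classical, the kernel of $f\colon E\to F$ being the subspace $\ker f$ with the induced topology (resp.\ induced bornology) and the cokernel being $F/\overline{\im f}$ with the quotient topology (resp.\ $F/\im f$ with the quotient bornology). The stability of strict morphisms is the content of item (5), so I would prove \emph{quasi-abelianness} and the \emph{characterization of strict epimorphisms} together: a strict epimorphism is precisely a surjection inducing the quotient structure, and dually strict monomorphisms are the embeddings; from this the pullback/pushout stability follows by the usual diagram computations. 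For item (3), I would note that the forgetful functors to $\bSet_k$ (the category of $k$-vector spaces) create limits because limits are computed on underlying vector spaces and then equipped with the initial topology (resp.\ the initial bornology) with respect to the projections; concretely the product $\prod_I E_i$ carries the product topology (resp.\ the bornology generated by products of bounded sets), which is exactly the categorical limit.

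For \emph{exact products}, item (2), the point is that $\prod_I$ is always well defined and additive, and I must check it sends short strictly exact sequences to short strictly exact sequences. Using the explicit description of products as initial structures and the characterization of strict morphisms from item (5), a product of strict monomorphisms is a strict monomorphism and a product of strict epimorphisms is a strict epimorphism; the key computation is that on products the quotient topology (resp.\ quotient bornology) agrees with the product of the quotient structures, which holds because forming products commutes with forming quotients at the level of the defining seminorms (resp.\ disks). For item (4), \emph{limits of complete objects are complete}, I would use that a limit is a closed subobject of a product (an equalizer inside $\prod_I E_i$), that a product of complete objects is complete, and that a closed subobject of a complete object is complete — all standard facts, the bornological versions being handled by the corresponding notion of completeness in $\bBorn_k$ via the reference \cite{H2}.

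The step I expect to be the main obstacle is the exactness of arbitrary products in item (2), specifically the claim that products preserve strict \emph{epi}morphisms: for locally convex spaces one must confirm that an arbitrary product of quotient maps is again a quotient map, and the subtle part is matching the product of quotient topologies with the quotient topology on the product, which can fail for general continuous surjections but does hold for the structured quotients appearing here. I would isolate this as the technical heart of the proof and verify it by tracking a fundamental system of seminorms (resp.\ a basis of the bornology) through the product construction, appealing to \cite{J} on the locally convex side and \cite{H2} on the bornological side; the other four items are then either definitional or direct consequences of the concrete description of limits as closed subobjects of products.
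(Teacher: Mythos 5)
Your proposal is correct, but it is genuinely more self-contained than what the paper does: the paper's entire ``proof'' consists of pointers to the literature, sending items (1), (2) and (5) to Proposition 1.8, Proposition 1.9 and Corollary 1.7 of \cite{PrSc} on the bornological side and to Proposition 2.1.11, Proposition 2.2.2 and Lemma 2.1.10 of \cite{Pr2} on the topological side (with the remark that the arguments there, written for $k=\R$ or $\C$, carry over verbatim to any complete valued field), declaring (3) immediate from the computation of limits, and deducing (4) abstractly from the fact that the complete objects form a reflective subcategory --- the completion functor is left adjoint to the inclusion, so the subcategory is closed under all limits existing in the ambient category. Your concrete verifications are essentially the content of those references, and the one place you take a visibly different route is (4), where your ``equalizer inside a product of complete spaces is a closed, hence complete, subspace'' argument is the hands-on counterpart of the paper's adjunction argument; both are fine, and yours has the advantage of not requiring you to know that the completion functor exists. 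Two points of care rather than gaps: first, your description of the cokernel as $F/\overline{\im f}$ presupposes the Hausdorff convention for $\bLoc_k$, whereas the non-separated convention used in \cite{Pr2} gives $F/\im f$ with the quotient topology; either choice yields a quasi-abelian category with the same strict epimorphisms as in (5), but the descriptions of cokernels and of plain epimorphisms (dense image versus surjective) must be kept consistent with whichever convention you fix. Second, you correctly isolate the only computation in (2) that deserves writing out, namely that an arbitrary product of quotient maps is again a quotient map (openness of the product map on basic open boxes in $\bLoc_k$, and lifting a bounded box $\prod_i B_i$ to $\prod_i A_i$ with $f_i(A_i)=B_i$ in $\bBorn_k$); this is exactly the verification carried out in the cited sources.
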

{\bf Proof.} 
All the claims of this proposition need easy verifications. We give some references in literature for them. Notice that the references to the literature usually consider only the case when $k = \C$ or $k = \R$, but the same proofs work for any base field.

\begin{enumerate}
\item See Proposition 1.8 of \cite{PrSc} and Proposition 2.1.11 of \cite{Pr2}.
\item See Proposition 1.9 of \cite{PrSc} and Proposition 2.2.2 of \cite{Pr2}.
\item Immediate consequence of the computation of the limits.
\item The inclusion functor of the full sub-categories of complete objects in $\bLoc_k$ and in $\bBorn_k$ have a left adjoint which is called the completion functor.
\item See Corollary 1.7 of \cite{PrSc} and Lemma 2.1.10 (iii) of \cite{Pr2}.
\end{enumerate}
\hfill $\Box$

Let us study first the projective systems of locally convex spaces. The next lemma the classical Mittag-Leffler lemma for Fr\'{e}chet spaces.

\begin{lemma} \label{lem:TopML} 
	Let $\{ V_i \}_{i \in \N}$ be a projective system of Fr\'{e}chet spaces in $\bLoc_k$ indexed, for which all system morphisms are dense. Then, the Roos complex
	\begin{equation} 
	0 \to \limpro_{i \in \mathbb{N}}V_{i} \to \prod_{i \in \mathbb{N}}V_{i} \stackrel{\Delta}\to \prod_{i \in \mathbb{N}}V_{i} \to 0
	\end{equation}
	is strictly exact in $\bLoc_k$.
\end{lemma}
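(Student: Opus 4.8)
The plan is to reduce the strict exactness of the three-term Roos complex to the single assertion that the map $\Delta$ is a strict epimorphism, and then to split this into a purely algebraic part (surjectivity) and a purely topological part (openness). Indeed, by Corollary \ref{cor:Weibel} together with Remark \ref{rmk:LH_0}, the displayed complex is strictly exact precisely when $\Delta$ is a strict epimorphism: the inclusion $\limpro_{i\in\N} V_i \hookrightarrow \prod_{i\in\N} V_i$ is automatically a strict monomorphism with image $\ker\Delta$, since in $\bLoc_k$ the projective limit is computed as the closed subspace $\{(x_i) : x_i = \pi_{i,i+1}(x_{i+1})\}$ of the product endowed with the subspace topology. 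Note also that $\prod_{i\in\N} V_i$ is again a Fr\'echet space, being a countable product of Fr\'echet spaces, and that $\Delta$ is a continuous $k$-linear endomorphism of it.

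First I would dispose of strictness. Since the source and target of $\Delta$ are both Fr\'echet, the open mapping theorem --- valid over an arbitrary complete valued field $k$, Archimedean or not (see \cite{Wen}, and \cite{Bam2} for the non-Archimedean adaptation via webbed spaces) --- guarantees that $\Delta$ is automatically open onto its image as soon as it is surjective. Hence $\Delta$ is a strict epimorphism if and only if it is surjective, and the entire statement is reduced to showing that every $(y_i)_{i\in\N} \in \prod_{i\in\N} V_i$ lies in the image of $\Delta$, i.e. that the system $x_i - \pi_{i,i+1}(x_{i+1}) = y_i$ admits a solution $(x_i) \in \prod_{i\in\N} V_i$.

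This surjectivity is the classical Mittag-Leffler theorem, and I would reprove it so as to make transparent that only completeness and the density hypothesis enter, so that the argument is insensitive to the nature of $k$. The key observation is that replacing $(y_i)$ by $(y_i) + \Delta(c)$ for any $c = (c_i) \in \prod_{i\in\N} V_i$ changes neither solvability nor the solution up to the shift by $c$; so it suffices to correct the data into a summable shape. Writing $\pi_{i,k}$ for the composite system maps, a formal telescoping shows that if $\sum_{k\ge i}\pi_{i,k}(\tilde y_k)$ converges in $V_i$ for every $i$, then $x_i \doteq \sum_{k\ge i}\pi_{i,k}(\tilde y_k)$ solves $\Delta(x) = \tilde y$. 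I would therefore construct $c$ inductively: at the $k$-th step, using that $\pi_{k,k+1}$ has dense image, choose $c_{k+1}$ so that $\pi_{k,k+1}(c_{k+1})$ approximates $y_k + c_k$ closely enough --- with respect to the finitely many seminorms of $V_k$ controlling, via continuity of $\pi_{i,k}$ for $i\le k$, the relevant seminorms on $V_0,\dots,V_{k}$ --- to force $\pi_{i,k}(\tilde y_k)$ to be smaller than $2^{-k}$ in the first $k$ seminorms of each $V_i$ with $i\le k$. Completeness of each $V_i$ then makes $x_i$ converge, and $\Delta(x-c) = y$.

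The hard part, and the only place requiring genuine care, is this inductive bookkeeping: the seminorms of $V_k$ bear no a priori relation to those of the lower spaces $V_i$, so smallness must be propagated downward one level at a time and only for the finitely many indices already in play at step $k$. Everything else is formal. In particular the passage from surjectivity to strictness costs nothing beyond the open mapping theorem, and in the non-Archimedean case the convergence of $\sum_{k\ge i}\pi_{i,k}(\tilde y_k)$ is even easier, requiring merely that the summands tend to $0$.
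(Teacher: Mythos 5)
Your proposal is correct and follows essentially the same route as the paper: the paper's proof simply invokes the classical Mittag--Leffler surjectivity criterion (Theorem 3.2.1 of \cite{Wen}) for the map $\Delta$ together with the Open Mapping Theorem for Fr\'echet spaces to upgrade surjectivity to strictness. You have merely unpacked the cited surjectivity statement into its standard telescoping-plus-density proof, which is a faithful (and field-agnostic) reconstruction of the same argument.
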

{\bf Proof.} 
The lemma follows immediately from Theorem 3.2.1 of \cite{Wen} and the Open Mapping Theorem for Fr\'echet spaces.
\hfill $\Box$

We need more advanced results. As explained in Corollary \ref{cor:Weibel}, the problem of verifying that $\underset{i \in \N}\limpro V_i$ is exact is equivalent to check that the map
\[ \prod_{i \in \mathbb{N}}V_{i} \stackrel{\Delta_{V_\bullet}}\to \prod_{i \in \mathbb{N}}V_{i} \]
is a strict epimorphism, \ie a quotient map of locally convex spaces. In this case we can explicitly write
\[ \Delta_{V_\bullet}(\dots ,a_2,a_1,a_{0}) = (\dots, a_2 - \pi_{2,3}(a_3),a_1 - \pi_{1,2}(a_2),a_0 - \pi_{0,1}(a_1)). \]
For the sake of clarity, we divide the problem in two steps: one step is to check when the map $\Delta_{V_{\bullet}}$ is surjective and the other step is to check when from the surjectivity of $\Delta_{V_{\bullet}}$ we can also deduce that it is even a quotient map.
We do not enter in a study of all general results for locally convex spaces, but in principles the non-Archimedean version of all the results of \cite{Wen} should hold. We prove here only the ones that we strictly need.

The next is our version of Theorem 3.2.9 of \cite{Wen}.

\begin{thm} \label{thm:LB_proj_0}
	Let $\{ V_i \}_{i \in \N}$ be a projective system of LB spaces. Then, $\Delta_{V_\bullet}$ is surjective if and only if there is a sequence of Banach disks $B_n \subset V_n$, such that
	\[ \pi_{n, m}(B_m) \subset B_n, \ \ \forall m,n \in \N, m \ge n  \]
	and 
	\[ \forall n \in \N, \exists m \ge n, \forall l \ge m, \pi_{n, m}(V_m) \subset \pi_{n, l}(V_l) + B_n. \]
	.
\end{thm}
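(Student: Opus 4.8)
The plan is to read the surjectivity of $\Delta_{V_\bullet}$ as a solvability statement and then treat the two implications separately. Since the image of $\Delta_{V_\bullet}$ consists exactly of the ``coboundaries'', surjectivity is equivalent to the assertion that for every $(b_n)_{n\in\N}\in\prod_n V_n$ the system of equations $a_n-\pi_{n,n+1}(a_{n+1})=b_n$, $n\in\N$, admits a solution $(a_n)_{n\in\N}$. Telescoping these equations gives, for every $N\ge n$, the identity $a_n=\sum_{k=n}^{N-1}\pi_{n,k}(b_k)+\pi_{n,N}(a_N)$, so the whole problem is to manufacture a consistent family of ``tails'' $a_N$ that make the formal series $\sum_{k\ge n}\pi_{n,k}(b_k)$ convergent at every level.

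For sufficiency ($\Leftarrow$) I would assume the Banach disks $B_n$ with properties (1) and (2) given and solve the equations by a Mittag-Leffler type correction. The point of a Banach disk $B_n$ is that the normed space $V_{n,B_n}$ it spans is complete, so that a series whose terms lie in $2^{-k}B_n$ converges in $V_n$. Condition (2) is used to trade an element $\pi_{n,m}(x)$ coming from level $m$ for an element $\pi_{n,l}(y)$ coming from a much deeper level $l$, up to an error lying in $B_n$; iterating this ``pushing to deeper levels'' with the scaling factors $2^{-k}$ makes the partial sums Cauchy in each $V_{n,B_n}$, while property (1) guarantees that the limits obtained at the various levels $n$ are compatible under the $\pi_{n,n+1}$, so that they genuinely solve $a_n-\pi_{n,n+1}(a_{n+1})=b_n$. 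The technical core here is the simultaneous bookkeeping: the corrections must be organised by a single diagonal construction so that convergence holds in all the spaces $V_{n,B_n}$ at once and the consistency relations survive the passage to the limit.

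For necessity ($\Rightarrow$) I would exploit the fact that a countable product of LB spaces is webbed (both products and LB spaces being webbed in the sense of Definition \ref{defn:top_webs}) and that bounded subsets of an LB space are localised, i.e.\ contained in, hence absorbed by, a Banach disk of one of its Banach steps. Assuming $\Delta_{V_\bullet}$ surjective, and using that $\prod_n V_n$ is simultaneously webbed and ultrabornological, the open mapping theorem makes $\Delta_{V_\bullet}$ open (a strict epimorphism); from this openness I would extract, for each $n$, a Banach disk controlling the ``defect'' between the decreasing subspaces $\pi_{n,l}(V_l)$, which is exactly condition (2). Property (1) is then arranged a posteriori by replacing each $B_n$ with the absolutely convex hull of $\bigcup_{m\ge n}\pi_{n,m}(B_m)$ and checking that this enlargement is still a Banach disk. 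A contrapositive variant would instead build, whenever no such disks exist, a bounded family of data $(b_n)$ whose telescoped series is genuinely divergent, contradicting solvability.

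The main obstacle, and the reason this is phrased as ``our version'' of Theorem 3.2.9 of \cite{Wen}, is that the functional-analytic inputs of the necessity part --- the open mapping theorem for webbed spaces, the webbedness of the product, and the localisation of bounded sets --- are classically recorded only for $k=\R$ or $k=\C$. I would therefore have to verify that these tools carry over to a non-trivially valued non-Archimedean $k$, using the non-Archimedean notion of web fixed in Definition \ref{defn:top_webs}; once this is granted, Wengenroth's argument transfers essentially unchanged. I expect this non-Archimedean compatibility check, together with the simultaneous-convergence bookkeeping in the sufficiency part, to be the genuinely delicate points.
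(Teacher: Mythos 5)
The paper does not write this argument out: its proof is a two-line citation of Theorem 3.2.9 of \cite{Wen}, together with the observation that the lemmas that proof rests on (Theorems 3.2.1 and 3.2.5 of \cite{Wen}) are statements about complete metrizable groups with no convexity input, so that the only thing to check over a non-Archimedean $k$ is that a Banach disk $B_n$ still induces a complete metric on the subspace it spans. Your sufficiency half is consistent with that route: it is the Mittag--Leffler correction scheme of Theorem 3.2.1 of \cite{Wen}, run in the Banach spaces $V_{n,B_n}$. One point you should make explicit is where the factors $2^{-k}$ come from: condition (2) only gives an error in $B_n$, not in $2^{-k}B_n$, and the self-improvement to $\pi_{n,m}(V_m)\subset\pi_{n,l}(V_l)+\varepsilon B_n$ uses that both sides of the inclusion other than $B_n$ are linear subspaces (rescale $x$ by $\varepsilon^{-1}$ and use stability of $\pi_{n,l}(V_l)$ under scalars). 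With that remark the bookkeeping you describe closes.

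The necessity half is where your proposal genuinely diverges from the cited proof, and as sketched it has a gap. De Wilde's open mapping theorem (granting its non-Archimedean version, which the author treats in \cite{Bam2}) upgrades surjectivity of $\Delta_{V_\bullet}$ to openness, but openness is a statement about images of $0$-neighbourhoods, and $0$-neighbourhoods of an LB space are absorbing, unbounded sets carrying no information about Banach disks; I do not see how ``openness'' produces bounded sets $B_n$, let alone Banach disks, controlling the defects $\pi_{n,m}(V_m)\setminus\pi_{n,l}(V_l)$. The tool that actually does this is a Baire category argument inside the Banach steps $V_{n,j}$ of the LB spaces (writing $V_n=\bigcup_j V_{n,j}$ and locating a step whose intersection with the relevant sum of subspaces is non-meagre); this is exactly the mechanism the paper displays when it proves the refinement, Theorem \ref{thm:LB_proj_0_strong}, and it is also what Theorem 3.2.5 of \cite{Wen} encodes. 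Note too that the paper deliberately postpones webs and the open mapping circle of ideas to the separate question of whether $\Delta_{V_\bullet}$ is a \emph{quotient} map (Theorem \ref{thm:web_quotient}); they are not needed for bare surjectivity. Finally, your a posteriori repair of the nesting condition (1) by taking the absolutely convex hull of $\bigcup_{m\ge n}\pi_{n,m}(B_m)$ is not justified: an infinite union of images of Banach disks need not be bounded, and even when it is, its hull need not be a Banach disk; the nesting has to be built into the recursive construction of the $B_n$ (as in the chains of sets $A_j^{(n)}$ and $\tilde B_n$ in the proof of Theorem \ref{thm:LB_proj_0_strong}).
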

\begin{proof}
The adaptation of the proof of Theorem 3.2.9 of \cite{Wen} to the non-Archimedean case is very easy, because it relies on lemmas which use only the general notion of metric space, without any convexity assumption. Therefore, we can use the same proof noticing that $B_n \subset V_n$ are Banach disks which induce metric topologies on the subspaces of $V_n$ they span, which satisfies the hypothesis of Theorem 3.2.1 and Theorem 3.2.5 of \cite{Wen}, which are abstract results on metrizable groups required to prove the theorem.
\end{proof}

We also need a stronger version of last theorem. And to prove it we need two lemmas.

\begin{lemma} \label{lemma:LB_proj_0_strong_1}
Let $f: X \to Y$ be a continuous linear map between Hausdorff locally convex spaces and let $A \subset X$ and $B \subset Y$ be Banach disks. Then,
$f(A) + B$ and $A \cap f^{-1} (B)$ are Banach disks.
\end{lemma}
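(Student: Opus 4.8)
The plan is to verify the two defining properties of a Banach disk directly for each of the two sets, namely that each set is absolutely convex (a disk) and that the vector subspace it spans, equipped with the gauge (Minkowski) seminorm, is a Banach space. The absolute convexity is the easy half: since $f$ is linear and $A$, $B$ are absolutely convex, the image $f(A)$ is absolutely convex, and both $f(A)+B$ and $A \cap f^{-1}(B)$ are intersections/sums of absolutely convex sets, hence absolutely convex. So the content lies in the completeness of the associated normed spaces.

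For $A \cap f^{-1}(B)$ I would argue as follows. Write $C = A \cap f^{-1}(B)$ and let $X_A$, $Y_B$ denote the linear spans of $A$ and $B$ with their gauge norms, which are Banach by hypothesis. The span $X_C$ of $C$ sits inside $X_A$, and I claim the gauge norm of $C$ on $X_C$ is equivalent to the norm making $X_C$ complete. The key observation is that $f$ restricts to a bounded (norm-decreasing, up to a constant) linear map $X_A \to Y_B$ on the relevant subspaces, because $f(A) \subset$ some multiple of $B$ on the domain $f^{-1}(B)$. Then $X_C$ is exactly the preimage under this bounded map of $Y_B$, intersected with $X_A$; concretely one shows a Cauchy sequence in $X_C$ (for the gauge of $C$) is Cauchy both in $X_A$ and, via $f$, in $Y_B$, so it converges in $X_A$ to a limit $x$ with $f(x)$ landing in $Y_B$, and a standard closedness argument places the limit back in the unit ball of the gauge of $C$. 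The completeness of both $X_A$ and $Y_B$ and the continuity of $f$ are what make the two limits compatible.

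For $f(A) + B$ I would use the standard trick of realizing it as a quotient of a product of Banach spaces. Consider the bounded linear map $X_A \times Y_B \to X_{f(A)+B}$ sending $(a,b) \mapsto f(a) + b$; its image is the span of $f(A)+B$, and the gauge norm of $f(A)+B$ is (up to equivalence) precisely the quotient norm induced from the Banach space $X_A \times Y_B$ by this surjection. Since a quotient of a Banach space by a closed subspace is Banach, it suffices to check that the kernel is closed, which follows from the continuity of $(a,b) \mapsto f(a)+b$ into a Hausdorff space together with completeness. An alternative and perhaps cleaner route is to invoke the known fact that the gauge of a sum of two Banach disks is the infimal convolution of the two gauges and that completeness is preserved under this operation; but the quotient-of-a-product formulation keeps the argument self-contained.

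The main obstacle I anticipate is \textbf{not} the algebra but the bookkeeping needed to pass between the ambient Hausdorff locally convex topology on $X$ and $Y$ and the finer gauge-norm topologies on the spanned subspaces: one must make sure that the completeness being established is genuinely completeness of the gauge norm of the \emph{new} disk, and that Cauchy sequences in the new gauge do converge \emph{within} the new disk rather than merely in the ambient space. The Hausdorff hypothesis on $X$ and $Y$ is exactly what guarantees that the gauge seminorms are norms and that the various limits are unique and consistent, so it should be used carefully at the point where one identifies the limit of a Cauchy sequence.
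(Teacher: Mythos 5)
Your argument is correct and is essentially the paper's: the paper packages your two halves into the single exact sequence $0 \to X_{A\cap f^{-1}(B)} \xrightarrow{\,x\mapsto(x,-f(x))\,} X_A\times Y_B \xrightarrow{\,(x,y)\mapsto f(x)+y\,} Y_{f(A)+B}\to 0$, so that $Y_{f(A)+B}$ is a separated quotient of the Banach space $X_A\times Y_B$ and $X_{A\cap f^{-1}(B)}$ is its closed kernel --- precisely your quotient argument and your graph/Cauchy argument, respectively. One phrase worth tidying: $f$ need not restrict to a bounded map $X_A\to Y_B$ (there is no reason to have $f(A)\subset\lambda B$); what your Cauchy argument actually uses is only that the gauge of $A\cap f^{-1}(B)$ equals the maximum of the gauge of $A$ and the pullback under $f$ of the gauge of $B$, which is immediate and suffices.
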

\begin{proof}
Consider the subspace $X_{A \cap f^{-1} (B)} \subset X$ spanned by $A \cap f^{-1} (B)$ and the subspace $Y_{f(A) + B} \subset Y$ spanned by $f(A) + B$. There is an exact sequence 
\[ 0 \to X_{A \cap f^{-1} (B)} \stackrel{i}{\to} X_A \times Y_B \stackrel{q}{\to} Y_{f(A) + B} \to 0 \]
where $i(x) = (x, -f(x))$ embeds the graph of $f|_{X_{A \cap f^{-1} (B)}}$ in $X_A \times Y_B$, where $X_A \subset X$ is the Banach subspace of $X$ spanned by $A$ and $Y_B \subset Y$ is the Banach subspace of $Y$ spanned by $B$, and $q(x,y) = f(x) + y$. Since the unit ball of $Y_{f(A) + B}$, is $f(A) + B$ and it is a sum of two bounded disks of $Y$, then $Y_{f(A) + B}$ is separated. But a separated quotient of a Banach space is a Banach space and $X_{A \cap f^{-1} (B)}$, being the kernel of $q$ is a normed space mapped onto a closed subspace of $X_A \times Y_B$. Therefore, both $f(A) + B$ and $A \cap f^{-1} (B)$ are Banach disks.
\end{proof}

\begin{lemma} \label{lemma:LB_proj_0_strong_2}
Let $X$ be a Hausdorff topological vector space, $A$ a bounded
subset of $X$, and $B \subset X$ a Banach disk. Suppose that there exists a $\la \in k$, with $|\la| = \frac{1}{2}$ such that $A \subset B + \la A$, then $A \subset \la' B$ with $|\la'| = 2 + \epsilon$, with $\epsilon > 0$.
\end{lemma}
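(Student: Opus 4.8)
The plan is to turn the self-referential inclusion $A \subset B + \lambda A$ into a convergent geometric series inside the Banach space $X_B$ spanned by $B$. First I would fix $a \in A$ and apply the hypothesis repeatedly: writing $a = b_0 + \lambda a_1$ with $b_0 \in B$ and $a_1 \in A$, then $a_1 = b_1 + \lambda a_2$, and so on, after $n$ steps I obtain
\[ a = \sum_{j=0}^{n-1} \lambda^j b_j + \lambda^n a_n, \qquad b_j \in B, \ a_n \in A. \]

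Next I would analyse the two summands separately. For the partial sums, since $B$ is a Banach disk each $b_j$ satisfies $\| b_j \|_B \le 1$ for the gauge seminorm of $B$, hence $\| \lambda^j b_j \|_B = |\lambda|^j \| b_j \|_B \le 2^{-j}$; as $X_B$ is a Banach space and $\sum_j 2^{-j} = 2 < \infty$, the series $\sum_{j \ge 0} \lambda^j b_j$ converges in $X_B$ to some $b$ with $\| b \|_B \le 2$ (in the non-Archimedean case the ultrametric inequality even gives $\| b \|_B \le 1$). For the remainder, the boundedness of $A$ together with $|\lambda^n| = 2^{-n} \to 0$ forces $\lambda^n a_n \to 0$ in $X$. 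Since the inclusion $X_B \rhook X$ is continuous, the partial sums also converge to $b$ in $X$; letting $n \to \infty$ in the displayed identity and using that $X$ is Hausdorff I conclude $a = b$, so $a \in X_B$ with $\| a \|_B \le 2$.

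Finally, because the valuation on $k$ is non-trivial there exists $\lambda' \in k$ with $|\lambda'| > 2$; setting $\epsilon = |\lambda'| - 2 > 0$ we have $\| a \|_B \le 2 < |\lambda'|$, whence $\| (\lambda')^{-1} a \|_B \le 1$ and therefore $a \in \lambda' B$. As $a \in A$ was arbitrary this yields $A \subset \lambda' B$, as required.

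The step I expect to require the most care is the passage to the limit identifying $a$ with $b$: one must keep track of two different topologies — norm convergence of the series in $X_B$ and convergence of the tail $\lambda^n a_n$ in the ambient space $X$ — and invoke both the continuity of $X_B \rhook X$ and the Hausdorff property of $X$ to glue them. The only other point deserving attention is that the target scalar cannot in general be taken of absolute value exactly $2$, since the value group of $k$ need not contain $2$; this is precisely why the statement allows an arbitrary $\epsilon > 0$ rather than asserting $A \subset 2B$.
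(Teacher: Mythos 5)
Your proposal is correct and follows essentially the same argument as the paper: iterate the inclusion to get $a = \sum_{j<n} \la^j b_j + \la^n a_n$, let the geometric series converge in the Banach space $X_B$, kill the tail using the boundedness of $A$, and identify the limit with $a$ via the Hausdorff property. Your write-up is in fact somewhat more careful than the paper's about the interplay of the two topologies and about why the scalar must have absolute value strictly greater than $2$.
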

\begin{proof}
Given $a \in A$ we choose inductively $a_n \in A$ and $b_n \in B$ such that
\[ a = b_0 + \la a_0 = b_0 + \la (b_1 + \la a_1) = \sum_{i = 0}^{n} \la^i b_i + \la^n a_n \]
The first sum converges in the vector subspace spanned by $B$ in $X$ to an element $b \in \ol{\la^{-1} B} \subset \la' B$ (where the closure is taken with respect to the gauge norm induced by $B$) for any $\la'$ with $|\la'| > 2$. Since $\la^n a_n$ tends to $0$ in $X$ by the boundedness of $A$, we get $a = b \in \la' B.$
\end{proof}

\begin{thm} \label{thm:LB_proj_0_strong}
	Let $\{ V_i \}_{i \in \N}$ be a projective system of LB spaces.
	The following two conditions are equivalent:
	\begin{enumerate}
	\item $\Delta_{V_\bullet}$ is surjective;
	\item \begin{equation} \label{eqn:statement}
	 \forall n \in \N, \exists m \ge n, B_n \subset V_n, \pi_{n, m}(V_m) \subset \pi_{n}(\limpro_{i \in \N} V_i) + B_n
	 \end{equation}
	 where $B_n$ are bounded.
	\end{enumerate}
\end{thm}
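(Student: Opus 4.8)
The plan is to prove the equivalence by pivoting through the characterization already obtained in Theorem \ref{thm:LB_proj_0}: since that theorem identifies the surjectivity of $\Delta_{V_\bullet}$ with the existence of a compatible family of Banach disks $\{B_n\}$ satisfying $\pi_{n,m}(B_m) \subset B_n$ together with the ``stabilization modulo $B_n$'' condition $\forall n\, \exists m\, \forall l \ge m$, $\pi_{n,m}(V_m) \subset \pi_{n,l}(V_l) + B_n$, it suffices to show that this family condition is equivalent to condition (2). The one elementary fact used throughout is that $\pi_n = \pi_{n,l}\circ\pi_l$ forces $\pi_n(\limpro_{i\in\N} V_i) \subset \pi_{n,l}(V_l)$ for every $l \ge n$; this makes one of the two inclusions between the conditions immediate and isolates where the real work is needed.

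For the implication (2) $\Rightarrow$ (1), I would argue as follows. Given the bounded sets $B_n$ of (2), the inclusion $\pi_n(\limpro_{i\in\N} V_i) \subset \pi_{n,l}(V_l)$ gives at once $\pi_{n,m}(V_m) \subset \pi_{n,l}(V_l) + B_n$ for all $l \ge m$, which is the stabilization condition of Theorem \ref{thm:LB_proj_0} but only with $B_n$ bounded. It then remains to replace the $B_n$ by genuine Banach disks that are moreover compatible under the system maps. I would first absorb each $B_n$ into a Banach disk $\hat B_n$ (available from the LB structure of $V_n$), then set $C_n := \sum_{j \ge n} \pi_{n,j}(\la_j \hat B_j)$ with scalars $\la_j$ decreasing fast enough that the series defines a bounded, completant disk; Lemma \ref{lemma:LB_proj_0_strong_1} guarantees that each $\pi_{n,j}(\la_j \hat B_j)$ and their partial sums are Banach disks, and the shifted-sum structure yields the compatibility $\pi_{n,m}(C_m) \subset C_n$ directly. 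Since $C_n \supset \hat B_n \supset B_n$, both hypotheses of Theorem \ref{thm:LB_proj_0} hold and $\Delta_{V_\bullet}$ is surjective.

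The substantial direction is (1) $\Rightarrow$ (2). Starting from the compatible Banach disks $\{B_n\}$ of Theorem \ref{thm:LB_proj_0}, I fix $n$ and the associated $m$, take an arbitrary $x \in V_m$, and must produce a genuine element $z = (z_i) \in \limpro_{i\in\N} V_i$ with $\pi_n(z) - \pi_{n,m}(x)$ lying in a fixed bounded set. The idea is a successive-approximation (diagonal) construction: iterating the stabilization condition along an increasing sequence of levels $l_k \to \infty$, one builds elements $y_k \in V_{l_k}$ whose images $\pi_{j,l_k}(y_k)$ form, for each level $j$, a Cauchy sequence with respect to the gauge of $B_j$ whose successive corrections shrink geometrically, so that the limits $z_j := \lim_k \pi_{j,l_k}(y_k)$ exist by completeness of the Banach disks, are mutually compatible (whence $z = (z_j) \in \limpro_{i\in\N} V_i$), and absorb all but a bounded part of $\pi_{n,m}(x)$. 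Here Lemma \ref{lemma:LB_proj_0_strong_1} keeps the images and intersections of disks appearing in the iteration within the class of Banach disks, while Lemma \ref{lemma:LB_proj_0_strong_2} is exactly the summation/absorption step that converts the geometric improvement of the error into the statement that the residual error is absorbed by a scalar multiple of $B_n$, hence bounded. I expect the main obstacle to be precisely the bookkeeping of this iteration: organizing the corrections so that they are simultaneously compatible across all levels (yielding a true element of the projective limit) while keeping the accumulated error inside a single bounded set. It is this coupling of ``compatibility'' and ``boundedness'' that the two lemmas are designed to decouple.
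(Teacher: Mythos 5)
Your overall skeleton---pivoting through Theorem \ref{thm:LB_proj_0}---is the same as the paper's, and your observation that $\pi_n(\limpro_{i \in \N} V_i) \subset \pi_{n,l}(V_l)$ correctly reduces (2) $\Rightarrow$ (1) to upgrading the merely bounded sets $B_n$ to a linked family of Banach disks. But that upgrade is exactly where your proposal has a genuine gap, and it is the entire content of the paper's proof. You propose to ``absorb each $B_n$ into a Banach disk $\hat B_n$ (available from the LB structure of $V_n$)''; this fails for general LB spaces. An LB space need not be regular, and by Grothendieck's factorization theorem any bounded Banach disk of $V_n = \limind V_{n,l}$ is already bounded in some step $V_{n,l}$, so in a non-regular LB space there exist bounded sets contained in no bounded Banach disk at all. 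Even granting the absorption, your $C_n = \sum_{j \ge n} \pi_{n,j}(\la_j \hat B_j)$ is a countable sum of bounded sets of an LB space, which need not be bounded (let alone completant), since the summands may escape to ever larger steps; Lemma \ref{lemma:LB_proj_0_strong_1} only covers finite sums. This gap is precisely the difference between Theorem \ref{thm:LB_proj_0} (Theorem 3.2.9 of \cite{Wen}, where Banach disks are given) and the present statement (Theorem 3.2.16 of \cite{Wen}, where only bounded sets are given), and the paper closes it with a Baire-category argument: it intersects sets of the form $\pi_{0,j}(A_j^{(0)}) + B_0$ with the Banach space $\lt \pi_{0,1}(B_1) \gt$, extracts non-meager pieces, and uses Lemmas \ref{lemma:LB_proj_0_strong_1} and \ref{lemma:LB_proj_0_strong_2} to manufacture the linked Banach disks $A_j^{(n)}$ and $\tilde B_n$ satisfying the hypotheses of Theorem \ref{thm:LB_proj_0}. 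Some such non-meagerness device is unavoidable here; a direct construction of the disks will not work.

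A secondary point: you have the difficulty inverted. The paper treats (1) $\Rightarrow$ (2) as the easy direction---it follows from the condition of Theorem \ref{thm:LB_proj_0} by essentially the successive-approximation and absorption argument you sketch, with the completeness of the linked Banach disks and Lemma \ref{lemma:LB_proj_0_strong_2} converting the geometrically shrinking corrections into a single bounded error---and spends all of its effort on (2) $\Rightarrow$ (1). Your sketch of (1) $\Rightarrow$ (2) is reasonable in outline, but it does not compensate for the missing Baire step in the other direction, which is where the theorem actually lives.
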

\begin{proof}
We are adapting the proof of Theorem 3.2.16 of \cite{Wen}. Suppose that condition (2) holds. We can assume $m = n + 1$ without loss of generality. Let $\{ B_{n,l} \}_{l \in \N}$ be fundamental sequences of the Banach disks of $V_n$. Notice that for each $n \in \N$ we have that $V_n = \underset{l \in \N} \bigcup B_{n,l}$ (because $B_{n,l}$ is a base for the canonical bornology of $V_n$). We have that
\[ \pi_{0, 1}(B_1) \subset \pi_{0}(\limpro_{i \in \N} V_i) + B_0 \subset \pi_{0}(\pi_2^{-1}(\bigcup_{l \in \N} V_{2, l})) + B_0 = \bigcup_{l \in \N} \pi_{0}(\pi_2^{-1}(V_{2, l})) + B_0 . \]
Since the vector subspace of $V_0$ spanned by $\pi_{0, 1}(B_1)$ is Banach, we denote it by $\lt \pi_{0, 1}(B_1) \gt$, there exists a $l_2 \in \N$ such that
\[ \lt  \pi_{0, 1}(B_1) \gt \cap (\pi_{0}(\pi_2^{-1}(V_{2, l_2})) + B_0) \]
is a non-meager subset of $\lt \pi_{0, 1}(B_1) \gt$. Repeating the reasoning we can find an $l_3 \in \N$ such that
\[ \lt  \pi_{0, 1}(B_1) \gt \cap (\pi_{0}(\pi_2^{-1}(V_{2, l_2}) \cap \pi_3^{-1}(V_{3, l_3}) ) + B_0) \]
is not meager in $\lt \pi_{0, 1}(B_1) \gt$, and therefore, by induction, for any $h \ge 2$, we can find $l_h$'s such that for a fixed $j \ge 2$
\[ \lt  \pi_{0, 1}(B_1) \gt \cap (\pi_{0}( \bigcap_{h \ge 2}^j \pi_h^{-1}(V_{h, l_h})) + B_0) \]
is not meager in $\lt \pi_{0, 1}(B_1) \gt$. The same conclusions hold for the bigger subsets
\[ \lt  \pi_{0, 1}(B_1) \gt \cap (\pi_{0, j}( \bigcap_{h \ge 2}^j \pi_{h, j}^{-1}(V_{h, l_h})) + B_0), \]
for any $j \ge k$.

For $j \ge 2$ we can define the sets
\[ A_j^{(0)} = \bigcap_{h \ge 2}^j \pi_{h, j}^{-1}(V_{h, l_h})  \]
which are Banach disks as a consequence of Lemma \ref{lemma:LB_proj_0_strong_1}. Since $\lt  \pi_{0, 1}(B_1) \gt \cap (\pi_{0, j}( A_j^{(0)}) + B_0)$ is not meager, its closure in $\lt  \pi_{0, 1}(B_1) \gt$ has an interior point which can be assumed to be $0$. Hence there is a $\mu_j \in (k^{\circ \circ})^\times$, such that
\[ \mu_j \pi_{0, 1}(B_1) \subset \ol{\lt  \pi_{0, 1}(B_1) \gt \cap (\pi_{0, j}( A_j^{(0)}) + B_0)} \subset  \pi_{0, j}( A_j^{(0)}) + B_0 + \la \mu_j \pi_{0, 1}(B_1)  \]
where $\la \in k$ and $|\la| = \frac{1}{2}$. Lemma \ref{lemma:LB_proj_0_strong_2} and Lemma \ref{lemma:LB_proj_0_strong_1} combined imply 
\[ \frac{\mu_j}{\la'} \pi_{0, 1}(B_1) \subset \pi_{0, j}( A_j^{(0)}) + B_0,  \]
for a suitable $\la' \in k^\times$ whose valuation can be bounded independently of $n$ and $j$, from which we can deduce that
\[ \mu_j^{(1)} B_1 \subset \pi_{1, j}( A_j^{(0)}) + \pi_{0, 1}^{-1}(B_0)  \]
where $\mu_j = \frac{\mu_j^{(1)}}{\la'}$. By the same reasoning we can produce a sequence of Banach disks $A_j^{(n)} \subset V_j$ and sequences of elements of $(k^{\circ \circ})^\times$, denoted $\{ \mu_j^{(n)} \}_{n,j \in \N}$, with $j \ge n + 2$, such that
\begin{enumerate}
\item $\mu_j^{(n)} B_{n+1} \subset \pi_{j, n+1}(A_j^{(n)}) + \pi_{n, n+ 1}^{-1}(B_n)$;
\item $\pi_{j, j+1}(A_{j+1}^{(n)}) \subset A_j^{(n)}$;
\item and replacing $A_j^{(n)}$ by $A_k^{(1)} + \dots + A_j^{(n)}$ if necessary (operation that does not affect the previous two points) we have that $A_j^{(n - 1)} \subset A_j^{(n)}$.
\end{enumerate}
Now we define inductively, $\tilde{B}_0 = B_0$ and 
\[ \tilde{B}_{n + 1} = \pi_{n, n +1}^{-1}(\tilde{B}_n) \cap (B_n + \pi_{n + 1, n + 2}(A_{n+2}^{(n)})) \]
Notice that $\tilde{B}_n$ is a Banach disk of $V_n$ as a consequence of Lemma \ref{lemma:LB_proj_0_strong_1}.

Proceeding by induction on $n$ we show that for all $n \in \N$ and $j \ge n + 1$ there are $\delta^n_j \in (k^{\circ})^\times$ with
\begin{equation} \label{eqn:star}
 \delta^n_j B_n \subset \tilde{B}_n + \pi_{n, j}(A_j^{(n - 1)}). 
\end{equation}
The base of the induction is clear with $n = 0$ and $\delta^n_j = 1$. Suppose we have found for some $n \in \N$ constants $\delta^n_j \in (k^{\circ})^\times$ such that (\ref{eqn:star}) holds for all $k \ge n + 1$. We set $\delta^{n + 1}_j = \frac{\mu_j^{(n)}}{\epsilon_j^n} \delta^n_j$ (where $\epsilon_k = 1$ if $k$ is non-Archimedean and $\epsilon_k = 2$ otherwise). Then, the point (1) and the induction hypothesis yield
\[ \delta^{n + 1}_j B_{n +1} \subset  \frac{\delta_j^n}{\epsilon_j^n} \pi_{j, n+1}(A_j^{(n)}) + \frac{1}{\epsilon_j^n} \pi_{n, n+ 1}^{-1}( \delta_j^n B_n) \subset  \]
\[ \subset \frac{1}{\epsilon_j^n} \pi_{j, n+1}(A_j^{(n)}) + \frac{1}{\epsilon_j^n} \pi_{n, n+ 1}^{-1}(\tilde{B}_n + \pi_{n, j}(A_j^{(n - 1)})). \]
One can easily check the relation
\[ \pi_{n, n+ 1}^{-1}(\tilde{B}_n + \pi_{n, j}(A_j^{(n - 1)})) \subset \pi_{n, n+ 1}^{-1}(\tilde{B}_n) + \pi_{n + 1, j}(A_j^{(n - 1)}) \]
which in combination with the point (3) gives
\[ \delta^{n + 1}_j B_{n +1} \subset \pi_{n, n+ 1}^{-1}(\tilde{B}_n) + \pi_{n + 1, j}(A_j^{(n)}). \]
Since $\pi_{n, n+ 1}^{-1}(\tilde{B}_n) \subset B_{n + 1}$ we have that
\[ \delta^{n + 1}_j B_{n +1} \subset \pi_{n, n+ 1}^{-1}(\tilde{B}_n) \cap (B_{n + 1} + \pi_{n + 1, j}(A_j^{(n)})) + \pi_{n + 1, j}(A_j^{(n)}) \]
and applying the definition of $\tilde{B}_{n + 1}$ and (2) we get
\[ \delta^{n + 1}_j B_{n +1} \subset \tilde{B}_{n + 1} + \pi_{n + 1, j}(A_j^{(n)})  \]
as required. To conclude the proof it is enough to check that from condition (\ref{eqn:star}) we can deduce the condition of Theorem \ref{thm:LB_proj_0} (which easily implies (\ref{eqn:statement})). Multiplying (\ref{eqn:statement}) by $\delta^n_j$ we get
\[ \pi_{n, n+1}(V_{n +1}) \subset \pi_{n}(\limpro_{i \in \N} V_i) + \delta^n_j B_n \subset \pi_{n, j}(V_j) + \tilde{B}_n + \pi_{n, j}(A_j^{(n-1)}) \subset \pi_{n, j}(V_j) + \tilde{B}_n \]
concluding the proof.
\end{proof}

Then, we need to find conditions which imply $\Delta_{V_\bullet}$ to be a quotient map. For this purpose we need to use the notion of topological web introduced so far.

\begin{lemma} \label{lemma:web_LB}
Let $X$ be an LB space, then $X$ has a strict ordered web.
\end{lemma}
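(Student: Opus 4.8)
Let $X$ be an LB space. Then $X$ has a strict ordered web.

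The plan is to build the web explicitly from the LB structure of $X$. Write $X = \limind_{n} X_n$ as a countable direct limit of Banach spaces, where we may assume the system maps $X_n \to X_{n+1}$ are injective with closed image (or at least that the $X_n$ form an increasing sequence of Banach subspaces whose union is $X$, with the LB topology being the finest locally convex topology making each inclusion continuous). Denote by $B_n$ the closed unit ball of $X_n$. The idea is to index the web so that the first coordinate selects the Banach space $X_n$ in which we are working, and the subsequent coordinates shrink the ball dyadically (or by powers of a scalar of absolute value $<1$) inside that fixed Banach space.

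First I would define, for the empty sequence, $\cW(\void) = X$. For a single index $n_0$, I would set $\cW(n_0)$ to be the subspace $X_{n_0}$ (or a suitable absolutely convex absorbing set inside it), ensuring property (2) and the covering property (3) at the first level, since $X = \bigcup_{n} X_n$. Then, for longer sequences $(n_0, n_1, \dots, n_i)$ with $i \ge 1$, I would set
\[ \cW(n_0, n_1, \dots, n_i) = \pi_{n_i} B_{n_0}, \]
where $\pi_{n_i}$ is a scalar of absolute value tending to $0$ as $n_i \to \infty$ (e.g.\ $|\pi_{n_i}| = |\alpha|^{n_i}$ for a fixed $\alpha \in (k^{\circ\circ})^\times$), so that each $\cW(n_0,\dots,n_i)$ is a scaled ball of the \emph{fixed} Banach space $X_{n_0}$ chosen at the first step. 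Property (1) (absolute convexity) is immediate since scalar multiples of the absolutely convex $B_{n_0}$ are absolutely convex; property (3) holds because $\bigcup_{n} \pi_n B_{n_0} = X_{n_0}$ (the scalars $\pi_n$ exhaust a cofinal set of small radii, so their union is the whole subspace). For the convergence property (4), given a sequence $s : \N \to \N$, all the sets $\cW(s(0),\dots,s(i))$ for $i \ge 1$ live inside the single Banach space $X_{s(0)}$; choosing the $\la_i$ to decay geometrically guarantees that $\sum_{i} \mu_i x_i$ is absolutely convergent in the Banach norm of $X_{s(0)}$, hence convergent in $X_{s(0)}$, and therefore convergent in $X$ since the inclusion $X_{s(0)} \hookrightarrow X$ is continuous.

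To get the web \emph{ordered}, I would arrange the indexing monotonically: since larger first index $n_0$ gives a larger Banach space $X_{n_0} \supset X_{n_0'}$ for $n_0 \ge n_0'$, and at deeper levels a larger $n_i$ gives a \emph{smaller} ball $\pi_{n_i} B_{n_0}$, I would reverse the convention on the deeper coordinates (or equivalently define $\cW(n_0,\dots,n_i)$ using $|\pi_{n_i}|$ \emph{increasing} in $n_i$) so that $s \le s'$ componentwise forces $\cW(s(0),\dots,s(i)) \subset \cW(s'(0),\dots,s'(i))$ at every level, matching Definition of ordered web. For strictness, I would verify that the tail series $\sum_{i=j}^{\infty} \mu_i x_i$ lands in $\cW(s(0),\dots,s(j))$: because all terms from level $j$ onward lie in scaled copies of $B_{s(0)}$ that are contained in $\cW(s(0),\dots,s(j))$ up to the geometric factor, a sufficiently fast decay of the $\la_i$ confines the tail sum to the prescribed set. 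The main obstacle I anticipate is bookkeeping the two competing monotonicities—the first coordinate enlarging the ambient Banach space while deeper coordinates shrink the ball—so that ordered-ness and the strictness estimate hold simultaneously; this is resolved by a careful choice of which coordinate controls which scale, keeping in mind that the essential point making everything work is that beyond the first index the entire web stays inside a \emph{single} Banach space, where completeness gives convergence for free.
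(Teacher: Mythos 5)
Your overall strategy is the same as the paper's: take a fundamental sequence of Banach disks $B_n$ of the LB space (the unit balls of the defining Banach spaces), let the first index of the web select which disk $B_{n_0}$ one works in, and let the deeper indices only rescale inside the fixed Banach space $X_{B_{n_0}}$, so that condition (4) reduces to absolute convergence in a single Banach space. That core idea is correct and is exactly what the paper's (very terse) proof does. However, your concrete formula $\cW(n_0,\dots,n_i)=\pi_{n_i}B_{n_0}$ has a genuine defect: a set depending only on the first and the \emph{last} coordinate cannot satisfy axiom (3) of Definition \ref{defn:top_webs} at depth $i\ge 2$. Indeed (3) forces $\bigcup_{n}\cW(n_0,\dots,n_i,n)=\cW(n_0,\dots,n_i)$, and if the left-hand side depends only on $(n_0,\,n)$ in its last slot, the union is independent of $n_i$, so all the sets $\cW(n_0,\dots,n_i)$ at a given depth would have to coincide with the largest one; with $|\pi_n|\to\infty$ the union is all of $X_{B_{n_0}}$, which breaks (3), and then (4) fails too since $X_{B_{n_0}}$ is unbounded. (With $|\pi_n|\to 0$, as you first wrote, your own claim $\bigcup_n\pi_nB_{n_0}=X_{B_{n_0}}$ is false — you need the scalars to grow, not shrink — so the two halves of your paragraph contradict each other.) Your proposed remedy, ``reverse the convention on the deeper coordinates,'' does not resolve this: the tension is not between the first coordinate and the deeper ones, but between axiom (3) (union over the \emph{appended} index must reproduce the set) and monotone dependence on the \emph{last} index alone.

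The fix, which is what the paper's formula $|\mu|=\min\{i,s(1),\dots,s(i)\}$ is encoding, is to let the scale at depth $i$ depend on the \emph{minimum} of the whole string: for instance $\cW(n_0,n_1,\dots,n_i)=\alpha^{-m}B_{n_0}$ with $m=\min\{n_1,\dots,n_i\}$ and $\alpha\in(k^{\circ\circ})^\times$ fixed (using powers of $\alpha$ rather than $|\mu|=m$ also avoids assuming the value group of $k$ contains every integer). Then appending an index $n\ge m$ leaves the set unchanged while smaller $n$ shrink it, so the union over $n$ recovers the set and (3) holds; orderedness follows because both $n_0\mapsto B_{n_0}$ (arranging the $B_n$ increasing) and $m\mapsto\alpha^{-m}B_{n_0}$ are monotone in the right direction; and along a fixed branch $s$ all sets of depth $\ge 1$ sit inside the single bounded disk $\alpha^{-s(1)}B_{s(0)}$ of the Banach space $X_{B_{s(0)}}$, where a geometric choice of the $\la_i$ gives absolute convergence of the series and of its tails into (the closure of) $\cW(s(0),\dots,s(j))$, yielding strictness. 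With this modification your argument goes through and coincides with the paper's construction.
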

\begin{proof}
	Let $\{ B_n \}_{n \in \N}$ be a base for the canonical bornology of $X$, then
	\[ \cW(s(1), \dots, s(i)) \doteq \{ \mu B_{s(1)} | \mu \in k, |\mu| = \min \{ i, s(1), \dots, s(i) \} \} \]
	is a strictly ordered web on $X$.
\end{proof}

\begin{lemma} \label{lemma:web_quotient}
Let $\{ V_i \}_{i \in \N}$ be a projective system of Hausdorff locally convex spaces. If each $X_n$ has
a strict ordered web $\cW^n$, then the surjectivity of $\Delta_{V_\bullet}$ implies that
\[ \exists s: \N \to \N, \forall n \in \N, \exists m \ge n, \forall l \ge m  \]
\[ \pi_{n,m}(V_m) \subset \pi_{n}(\limpro_{i \in \N} V_i) + \bigcap_{j = 1}^n \pi_{n,j}^{-1}(\cW^j(s(j), \dots, s(n))) \]
\end{lemma}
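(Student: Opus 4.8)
The plan is to adapt to the present setting the webbed-space results of \cite{Wen} that stand next to the surjectivity criteria already imported in Theorems \ref{thm:LB_proj_0} and \ref{thm:LB_proj_0_strong}, running the same Baire-category and De~Wilde localization scheme but now with the web sets $\cW^j$ playing the role that the fundamental Banach disks $B_{n,l}$ played in the proof of Theorem \ref{thm:LB_proj_0_strong}. First I would record the consequence of surjectivity of $\Delta_{V_\bullet}$ in telescoped form: for every $(b_i)\in\prod_i V_i$ there is $(a_i)\in\prod_i V_i$ with $b_i=a_i-\pi_{i,i+1}(a_{i+1})$, hence $a_n=\sum_{i=n}^{N-1}\pi_{n,i}(b_i)+\pi_{n,N}(a_N)$ for all $N>n$. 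This is the identity that, once the tail $\pi_{n,N}(a_N)$ is controlled through the web, will produce a projective-limit element $w$ whose projection $\pi_n(w)$ accounts for the leading part of the expression. As in Theorem \ref{thm:LB_proj_0_strong}, one reduces to $m=n+1$ without loss of generality.

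The heart of the argument is a level-by-level Baire construction. Property (3) of Definition \ref{defn:top_webs} lets me write each $V_j=\bigcup_{p\in\N}\cW^j(p)$ and, iterating, expand every web set as a countable union of its immediate successors. Working inside the normed spaces spanned by the chosen web sets — which carry enough completeness from the summability condition (4) for a category argument, the relevant preimages and intersections being Banach disks by Lemma \ref{lemma:LB_proj_0_strong_1} — I would select, by a diagonal Baire-category argument parallel to the one producing the sets $A_j^{(0)}$ in Theorem \ref{thm:LB_proj_0_strong}, web indices $s(1),s(2),\dots$ for which the relevant intersections $\bigcap_{j}\pi_{j,n}^{-1}(\cW^j(s(j),\dots,s(n)))$ are non-meager, hence have closures with nonempty interior. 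Strictness of the webs is then used twice: first to pass from the closure to a genuine inclusion, absorbing the closure error exactly as in Lemma \ref{lemma:LB_proj_0_strong_2}; and second, more importantly, to sum the tail series $\sum_{i\ge n}\mu_i x_i$ coherently at every level so that its partial sums converge to an honest element $w\in\limpro_{i\in\N}V_i$, which is what upgrades the approximate covering to the stated one with correction term lying in $\pi_n(\limpro_{i\in\N} V_i)$. Because the webs are ordered, the defining monotonicity of an ordered web guarantees that a single sequence $s$ may be chosen to serve all $n$ simultaneously, rather than one sequence per level.

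The step I expect to be the main obstacle is precisely this second use of strictness: converting the Baire-theoretic ``closure has interior'' statements, which a priori live only level by level, into a single series whose sum is compatible across the whole spectrum and therefore defines an element of the projective limit, while keeping the residual term inside the prescribed intersection of preimages of web sets. This is the point where the product $\prod_i V_i$ fails to be Baire and De~Wilde's open mapping theorem cannot be quoted directly, so the localization must be carried out by hand along the spectrum. Finally, as in Theorems \ref{thm:LB_proj_0} and \ref{thm:LB_proj_0_strong}, none of these steps uses convexity in an essential way — only the metric structure of the spanned Banach-disk spaces and the summability built into (4) — so the argument remains valid verbatim for a non-Archimedean base field $k$, up to the harmless replacement of the constant $2$ by $1$ in the non-Archimedean estimates.
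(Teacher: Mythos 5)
Your plan is essentially the paper's own proof: the paper disposes of this lemma by citing Theorem 3.2.13 of \cite{Wen} and observing that the argument there uses only facts valid for metrizable topological groups (hence transfers to non-Archimedean $k$), and your sketch is a faithful outline of exactly that De~Wilde-style localization — Baire category in the Banach-disk spans, strictness to sum the tails into an element of $\limpro_{i\in\N}V_i$, orderedness to fix a single sequence $s$. The obstacle you flag is real but is precisely what the cited theorem resolves, so no further comparison is needed.
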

\begin{proof}
	For the proof of this lemma we refer to Theorem 3.2.13 of \cite{Wen}, since only results valid for any metrizable topological group are used in the proof of that result.
\end{proof}


\begin{lemma} \label{lemma:web_quotient2}
Let $\{ V_i \}_{i \in \N}$ be a projective system of locally convex spaces,
the following conditions are equivalent:
\begin{itemize}
\item $\R \underset{i \in \N} \limpro V_i \cong \underset{i \in \N} \limpro V_i$;
\item $\forall n \in \N$ and $\forall U$ open $0$-neighborhood of $V_n$, $\exists m \ge n$ such that $\pi_{n, m}(V_m) \subset \pi_n(\underset{i \in \N}\limpro V_i) + U$.
\end{itemize}
\end{lemma}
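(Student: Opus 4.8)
The plan is to reduce the statement to a single assertion about the map $\Delta_{V_\bullet}$ of the complex (\ref{eqn:Weibel}) and then prove the two resulting implications separately. By Lemma \ref{lem:RoosML} the object $\R\limpro_{i\in\N}V_i$ is represented by (\ref{eqn:Weibel}), so Corollary \ref{cor:Weibel} tells us that $\R\limpro_{i\in\N}V_i\cong\limpro_{i\in\N}V_i$ is equivalent to $\Delta_{V_\bullet}$ being a strict epimorphism. By point (5) of the Proposition listing the properties of $\bLoc_k$ this means exactly that $\Delta_{V_\bullet}$ is surjective and a quotient (hence open) map. Writing $V:=\limpro_{i\in\N}V_i$, I would record once and for all the telescoping identity $a_n-\pi_{n,m}(a_m)=\sum_{i=n}^{m-1}\pi_{n,i}\bigl((\Delta_{V_\bullet}a)_i\bigr)$, valid for $m>n$ and $a=(a_i)\in\prod_i V_i$, since it converts statements about preimages under $\Delta_{V_\bullet}$ into statements about the transition maps.

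For the implication ``strict epimorphism $\Rightarrow$ neighborhood condition'' I expect an elementary and self-contained argument. Fix $n$ and a $0$-neighborhood $U$ of $V_n$. Since $\Delta_{V_\bullet}$ is open, the set $\Delta_{V_\bullet}(\{a:a_n\in U\})$ is a $0$-neighborhood of $\prod_i V_i$, hence contains a basic one that constrains only finitely many coordinates, say those in a finite set $F$; I then pick $m$ with $m\ge n+1$ and $m-1>\max F$. For $x\in V_m$ I consider the ``spike'' $b\in\prod_i V_i$ with $b_{m-1}=\pi_{m-1,m}(x)$ and all other components zero; because $m-1\notin F$ this $b$ lies in the basic neighborhood, so there is $a$ with $a_n\in U$ and $\Delta_{V_\bullet}a=b$. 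The telescoping identity yields $a_n-\pi_{n,m}(a_m)=\pi_{n,m}(x)$, while the vanishing of $(\Delta_{V_\bullet}a)_i$ for $i\ge m$ shows $(a_i)_{i\ge m}$ is a compatible thread, which I extend downward by $v_i:=\pi_{i,m}(a_m)$ to a genuine $v\in V$ with $\pi_n(v)=\pi_{n,m}(a_m)$. Thus $\pi_{n,m}(x)=a_n-\pi_n(v)\in\pi_n(V)+U$, and as $x$ is arbitrary, $\pi_{n,m}(V_m)\subset\pi_n(V)+U$.

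The converse — deducing that $\Delta_{V_\bullet}$ is a strict epimorphism from the neighborhood condition — is where the real difficulty lies, and here I would import the functional-analytic results of \cite{Wen} in the forms already adapted in Theorem \ref{thm:LB_proj_0_strong} and Lemma \ref{lemma:web_quotient}. The openness of $\Delta_{V_\bullet}$ is the more accessible half: reading the argument of Lemma \ref{lemma:web_quotient} in reverse, and using that it only invokes properties of metrizable topological groups (so that it transfers unchanged to non-Archimedean $k$), the quantifier ``for all $U$'' in the hypothesis is precisely what forces $\Delta_{V_\bullet}$ to carry neighborhoods to neighborhoods. The genuine obstacle is surjectivity, because the hypothesis is only an \emph{approximation} statement: to manufacture actual preimages I would rerun the inductive construction underlying Theorem \ref{thm:LB_proj_0_strong}, assembling from the approximating data a convergent series inside suitable Banach disks whose sum solves $\Delta_{V_\bullet}a=b$. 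This passage from approximate to exact solvability is the crux, and it rests on the Baire-category and metric-completeness arguments of Theorems 3.2.1 and 3.2.5 of \cite{Wen}; as those are stated for metrizable groups they carry over verbatim, so the base field enters only at this last, routine, point.
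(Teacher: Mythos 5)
Your first implication (strict epimorphism $\Rightarrow$ neighbourhood condition) is complete, correct, and in fact more self-contained than the paper, which proves the whole lemma only by reference to Theorem 3.3.1 of \cite{Wen}. The reduction via Corollary \ref{cor:Weibel} to the statement that $\Delta_{V_\bullet}$ is a surjective open map, the telescoping identity, and the ``spike'' argument all check out: the spike $b$ lies in a basic $0$-neighbourhood of the product because it avoids the finitely many constrained coordinates, its preimage $a$ has $a_n \in U$, the vanishing of $(\Delta_{V_\bullet}a)_i$ for $i \ge m$ makes the tail of $a$ a compatible thread $v$, and $\pi_{n,m}(x) = a_n - \pi_n(v) \in U + \pi_n(\underset{i \in \N}\limpro V_i)$.

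The converse is where your proposal has a genuine gap. You propose to obtain surjectivity of $\Delta_{V_\bullet}$ by ``rerunning the inductive construction underlying Theorem \ref{thm:LB_proj_0_strong}'', but that construction is specific to LB spaces: it needs a fundamental sequence of Banach disks in each $V_n$ and Baire category inside the Banach subspaces they span. Lemma \ref{lemma:web_quotient2} is stated for arbitrary locally convex spaces, which carry no such structure, so the metrizable-group results 3.2.1 and 3.2.5 of \cite{Wen} have nothing to act on; likewise ``reading Lemma \ref{lemma:web_quotient} in reverse'' is not an argument for openness. What \emph{can} be done elementarily is the implication [$\Delta_{V_\bullet}$ surjective $+$ neighbourhood condition] $\Rightarrow$ [$\Delta_{V_\bullet}$ open]: given $b$ small in coordinates $\le m$ and any solution $a$ of $\Delta_{V_\bullet}a = b$, your telescoping identity expresses $a_n$ as a small sum plus $\pi_{n,m}(a_m)$; the neighbourhood condition writes $\pi_{n,m}(a_m) = \pi_n(v) + u$ with $v \in \underset{i \in \N}\limpro V_i$ and $u \in U$; and subtracting the thread $v \in \ker \Delta_{V_\bullet}$ yields a solution whose $n$-th (hence every earlier) coordinate is small. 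This is exactly the form in which the lemma is used in Theorem \ref{thm:web_quotient}, where surjectivity is a standing hypothesis. If you want the lemma literally as stated --- surjectivity deduced from the neighbourhood condition alone for arbitrary locally convex $V_i$ --- you must supply that argument separately: note that the condition holds trivially (with $m = n$) whenever $\pi_n(\underset{i \in \N}\limpro V_i)$ is dense in $V_n$, so deducing surjectivity from it is a Mittag--Leffler-type statement that normally requires completeness and metrizability, and the ``Baire-category and metric-completeness arguments'' you invoke have nothing to attach to in this generality.
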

\begin{proof}
For the proof of this lemma we refer to Theorem 3.3.1 of \cite{Wen}. Also for this result the argumentations of \cite{Wen} are valid, with only small obvious changes, for any base field.
\end{proof}

\begin{thm} \label{thm:web_quotient}
	Let $\{ V_i \}_{i \in \N}$ be a projective system of locally convex spaces such that $\Delta_{V_\bullet}$ is surjective. Then, if $V_i$ have strict ordered webs $\Delta_{V_\bullet}$ is a quotient map.
\end{thm}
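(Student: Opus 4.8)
The plan is to deduce the statement from the two structural characterisations already at our disposal, reducing everything to a single neighborhood-growth condition. Since $\Delta_{V_\bullet}$ is assumed surjective, being a quotient map is equivalent to being a strict epimorphism (part (5) of the Proposition recording the basic properties of $\bLoc_k$, together with the standard fact that a surjective open map is a strict epimorphism), and by Corollary \ref{cor:Weibel} this in turn is equivalent to $\R\limpro_{i\in\N} V_i\cong\limpro_{i\in\N} V_i$. Hence, by Lemma \ref{lemma:web_quotient2}, proving that $\Delta_{V_\bullet}$ is a quotient map is the same as verifying that for every $n\in\N$ and every open $0$-neighborhood $U\subset V_n$ there is $m\ge n$ with $\pi_{n,m}(V_m)\subset\pi_n(\limpro_{i\in\N}V_i)+U$. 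So the whole proof amounts to establishing this neighborhood condition.

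First I would feed in the web hypothesis through Lemma \ref{lemma:web_quotient}: because $\Delta_{V_\bullet}$ is surjective and every $V_j$ carries a strict ordered web $\cW^j$, there is a single sequence $s:\N\to\N$ such that for each $n$ one finds $m\ge n$ with
\[ \pi_{n,m}(V_m)\subset\pi_n(\limpro_{i\in\N}V_i)+\bigcap_{j=1}^{n}\pi_{n,j}^{-1}\big(\cW^j(s(j),\dots,s(n))\big). \]
The remaining task, and the heart of the matter, is to trade this fixed web-set error term for an arbitrary prescribed neighborhood $U$. Note that simply enlarging $s$ only makes the web sets grow (by the ordering), so the neighborhood cannot be reached by monotonicity alone; this is exactly the step where the strictness of the webs must be exploited and where the merely algebraic surjectivity gets upgraded to topological openness.

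The hard part will therefore be an absorbing/summation argument of De Wilde type, adapted to an arbitrary (possibly non-Archimedean) base field. Given $n$ and $U$, I would start from a point $y=\pi_{n,m}(x_m)$ and apply the web condition repeatedly at increasing indices to write $y$ as $\pi_n(\xi)$, with $\xi\in\limpro_{i\in\N}V_i$, plus a series of successive corrections, each lying in a web set of increasing depth. The convergence axiom (4) of the webs, together with strictness, lets me select scalars $\mu_i\in k$ with $|\mu_i|\le\lambda_i$ so that the correction series converges and each of its tails lands in the corresponding web set; choosing the tail deep enough forces it inside $U$. Passing to the limit exhibits $y$ as an element of $\pi_n(\limpro_{i\in\N}V_i)+U$, which is precisely the neighborhood condition. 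Once this is secured, Lemma \ref{lemma:web_quotient2} and Corollary \ref{cor:Weibel} (as in the first paragraph) close the argument, since $\Delta_{V_\bullet}$ is already surjective. I expect the summation/absorbing step to be the only genuine obstacle; the reductions surrounding it are formal.
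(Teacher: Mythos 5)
Your reductions coincide with the paper's: you pass through Corollary \ref{cor:Weibel} and Lemma \ref{lemma:web_quotient2} to turn ``quotient map'' into the neighborhood condition, and you feed the web hypothesis in through Lemma \ref{lemma:web_quotient}. The gap is precisely in the step you flag as the obstacle. The iteration you propose --- writing $y=\pi_{n,m}(x_m)$ as $\pi_n(\xi)$ plus a series of corrections lying in web sets of increasing depth --- is not licensed by the lemmas available: the conclusion of Lemma \ref{lemma:web_quotient} applies only to elements of the images $\pi_{n,m}(V_m)$, whereas after the first application your correction term is an arbitrary element of a web set, so there is nothing to re-apply the inclusion to. Moreover, axiom (4) of Definition \ref{defn:top_webs} yields convergence only for series $\sum \mu_i x_i$ with $|\mu_i|\le\la_i$; your corrections are web-set elements, not small multiples of them, so the convergence-of-tails argument does not attach to them as described.

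The missing ingredient is much simpler and requires no series construction. Applying Lemma \ref{lemma:web_quotient} at a level $n'\ge n$ and pushing forward along $\pi_{n,n'}$ gives, for each $n'$, some $m'$ with $\pi_{n,m'}(V_{m'})\subset \pi_n(\limpro_{i\in\N}V_i)+\cW^n(s(n),\dots,s(n'))$, i.e.\ the error term is a web set of $V_n$ of arbitrarily large depth along the \emph{fixed} strand $s$. Axiom (4) then forces these sets to be absorbed by any given $0$-neighborhood $U$: if $\cW^n(s(n),\dots,s(n'))\not\subset SU$ for all $n'$ and all scalars $S$, one extracts elements $x_l$ in deeper and deeper web sets whose admissible multiples $\mu_l x_l$ avoid $U$, contradicting the convergence of $\sum_l \mu_l x_l$ (whose terms must tend to $0$). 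Since $\pi_{n,m'}(V_{m'})$ and $\pi_n(\limpro_{i\in\N}V_i)$ are linear subspaces, multiplying the inclusion by $S^{-1}$ yields $\pi_{n,m'}(V_{m'})\subset\pi_n(\limpro_{i\in\N}V_i)+U$, which is exactly the condition of Lemma \ref{lemma:web_quotient2}. This absorption observation is what your plan should replace the summation argument with; the strictness and orderedness of the webs are consumed entirely inside Lemma \ref{lemma:web_quotient} and are not needed again at this stage.
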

\begin{proof}
This proof is essentially the same proof of Theorem 3.3.3 of \cite{Wen}. Let \[ \cW^n = \{ \cW^n (s(0), \dots, s(i)) \subset V_n | s \in \N^\N, i \in \N \} \]
be strict ordered webs in $V_n$. Lemma \ref{lemma:web_quotient} implies that there is $s \in \N^\N$ such that for each $n \in \N$ there is $m \in \N$ with
\[ \pi_{n, m}(V_m) \subset \pi_n(\limpro_{i \in \N} V_i) + \bigcap_{j = 1}^n \pi_{j, n}^{-1}(\cW^j(s(j), \ldots, s(n))). \]
We fix $n \in \N$ and $U$ to be a zero neighborhood of $V_n$. Then, there are $j \ge n$ and $S > 0$ such that $\cW^j (s(j), \ldots, s(n)) \subset S U$, since otherwise there would be natural numbers $n_l < n_{l+1}$ and $x_l \in \frac{1}{\la^{l+1}} \cW( s (n), \dots,s(n_l))$ with $x_l \notin U$ for  all $l \in \N$ and $0 < |\la| < 1$, contradicting the convergence of the series $\underset{l=1}{\overset{\infty}\sum} x_l$. Then
\[ \pi_{n, m}(V_m) \subset \pi_n(\limpro_{i \in \N} V_i) + \cW^n(s(n), \ldots, s(j)) \subset \pi_n(\limpro_{n \in \N} V_n) + S U  \]
and multiplying with $S^{-1}$ we get
\[ \pi_{n, m}(V_m) \subset \pi_n(\limpro_{i \in \N} V_i) + U  \]
which permits us to apply Lemma \ref{lemma:web_quotient2} to deduce the theorem.
\end{proof}

We now turn to the study of the case of bornological vector spaces.
The following easy observation permits to use a good part of the results we discussed for locally convex spaces for calculating the derived functors of $\underset{n \in \N} \limpro$ for bornological vector spaces.

\begin{lemma} \label{lemma:born_proj}
	Let $\{ V_n \}_{n \in \N}$ be a family of bornological vector spaces. Then, the differential map of the Roos complex 
	\[ \prod_{n \in \N} V_n \stackrel{\Delta_{V_\bullet}}{\to} \prod_{n \in \N} V_n \]
	is surjective if and only if the map
	\[ \prod_{n \in \N} (V_n)^t \stackrel{\Delta_{V_\bullet}^t}{\to} \prod_{n \in \N} (V_n)^t \]
	is surjective.
\end{lemma}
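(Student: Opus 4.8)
The plan is to observe that, since the statement concerns \emph{surjectivity} and not strict epimorphy, both conditions are purely set-theoretic conditions on the underlying linear map, and then to check that $\Delta_{V_\bullet}$ and $\Delta_{V_\bullet}^t$ are literally the same map of underlying vector spaces. Once this is established the equivalence is immediate, and there is no analytic content to verify.

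First I would recall from Definition \ref{defn:t_b_functors} that the functor ${}^t$ does not alter the underlying $k$-vector space of a bornological space: it only re-equips the space with the locally convex topology whose $0$-neighborhoods are the bornivorous subsets. Consequently the underlying vector space of $(V_n)^t$ coincides with that of $V_n$, and a bounded linear map $f \colon V \to W$ and its image $f^t \colon V^t \to W^t$ agree as maps of underlying vector spaces. Next I would invoke the property that the forgetful functors from $\bLoc_k$ and from $\bBorn_k$ commute with limits, so that both $\prod_{n \in \N} V_n$ (computed in $\bBorn_k$) and $\prod_{n \in \N} (V_n)^t$ (computed in $\bLoc_k$) have the \emph{same} underlying vector space, namely the set-theoretic product $\prod_{n \in \N} V_n$.

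The differential of the Roos complex is given by the explicit algebraic formula
\[ \Delta_{V_\bullet}(\dots, a_2, a_1, a_0) = (\dots, a_2 - \pi_{2,3}(a_3), a_1 - \pi_{1,2}(a_2), a_0 - \pi_{0,1}(a_1)), \]
which is built only from the identity maps and the system morphisms $\pi_{n,n+1}$. Since ${}^t$ leaves each $\pi_{n,n+1}$ unchanged on underlying vector spaces, the map $\Delta_{V_\bullet}^t$ is defined by exactly the same formula on the same underlying vector space. Therefore $\Delta_{V_\bullet}$ and $\Delta_{V_\bullet}^t$ coincide as linear endomorphisms of $\prod_{n \in \N} V_n$, and because surjectivity of a bounded (resp. continuous) linear map is nothing but surjectivity of its underlying map of sets, the two assertions are one and the same. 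The only point one must be careful about, and the sole reason the lemma is not entirely vacuous, is to stress that this comparison works precisely because we are comparing surjectivity rather than strict epimorphy: the latter involves the quotient bornology versus the quotient topology, which genuinely differ between $\bBorn_k$ and $\bLoc_k$ and are handled separately in the subsequent results.
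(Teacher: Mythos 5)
Your proposal is correct and follows essentially the same route as the paper's own proof: both arguments reduce to the observations that the forgetful functors to sets commute with products and that ${}^t$ preserves underlying vector spaces, so that the two $\Delta$ maps coincide as set maps and surjectivity is the same condition on both sides. Your additional remark about why the argument stops at surjectivity rather than strict epimorphy matches the caveat the paper itself makes in Remark \ref{rmk:frechet}.
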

\begin{proof}
	The computation of products in $\bLoc_k$ and $\bBorn_k$ show that the forgetful functors $\bLoc_k \to \bSet$ and $\bBorn_k \to \bSet$ commute with products. The claim follows form the fact that the underlying $k$-vector spaces of $(V_n)^t$ and $V_n$ are isomorphic.
\end{proof}

\begin{cor}
	Given a strictly exact sequence $0 \to E_\bullet \to F_\bullet \to G_\bullet \to 0$ of elements of $\bCBorn_k^{\N^\op}$, then the map
	\[ \limpro_{n \in \N} F_n \to \limpro_{n \in \N} G_n \]
	is surjective if and only if the map
	\[ \limpro_{n \in \N} (F_n)^t \to \limpro_{n \in \N} (G_n)^t \]
	is surjective.
\end{cor}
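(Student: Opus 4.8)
The plan is to reduce the statement to the level of underlying $k$-vector spaces, in exactly the spirit of the proof of Lemma \ref{lemma:born_proj}. The crucial observation is that here ``surjective'' means surjectivity of the underlying map of sets, not the stronger property of being a strict epimorphism (quotient map), which is the separate issue addressed by the web machinery in Theorem \ref{thm:web_quotient}. So what has to be compared is only the underlying map of vector spaces on the two sides.

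First I would record that the functor ${}^t: \bBorn_k \to \bLoc_k$ of Definition \ref{defn:t_b_functors} acts as the identity on underlying $k$-vector spaces and on the underlying maps of morphisms; hence the projective systems $\{ F_n \}_{n \in \N}$ and $\{ (F_n)^t \}_{n \in \N}$ (and likewise those built from $E_\bullet$ and $G_\bullet$) carry literally the same underlying system of vector spaces and transition maps. Next I would invoke the third listed property of $\bLoc_k$ and $\bBorn_k$ (the forgetful functors to $\bSet$ commute with limits), together with the fourth (a projective limit of complete objects is complete, so the limit of the $F_n$ computed in $\bBorn_k$ already lies in $\bCBorn_k$). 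Writing $U$ for the relevant forgetful functor, this gives that the underlying vector space of $\limpro_{n \in \N} F_n$ is the algebraic projective limit $\limpro_{n \in \N} U(F_n)$, and the identical computation produces the underlying vector space of $\limpro_{n \in \N} (F_n)^t$. The same holds with $G$ in place of $F$, and these identifications are compatible with the maps induced by $F_n \to G_n$, since $(F_n)^t \to (G_n)^t$ has the same underlying map. Consequently $\limpro_{n \in \N} F_n \to \limpro_{n \in \N} G_n$ and $\limpro_{n \in \N} (F_n)^t \to \limpro_{n \in \N} (G_n)^t$ are one and the same map of $k$-vector spaces.

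The conclusion is then immediate: a morphism in $\bBorn_k$ (respectively in $\bLoc_k$) is surjective precisely when its image under the forgetful functor is a surjection of sets, and we have just identified these two images. Hence $\limpro_{n \in \N} F_n \to \limpro_{n \in \N} G_n$ is surjective if and only if $\limpro_{n \in \N} (F_n)^t \to \limpro_{n \in \N} (G_n)^t$ is.

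Finally I would comment on the hypotheses and the main obstacle. The completeness assumption and the strict exactness of $0 \to E_\bullet \to F_\bullet \to G_\bullet \to 0$ are not strictly needed for the set-theoretic equivalence itself; they place us in the situation of interest, where, through the six-term exact sequence relating $\limpro$ with its first derived functor $\Coker \Delta_{E_\bullet}$ (computed by the Weibel complex of Lemma \ref{lem:RoosML}), the surjectivity of $\limpro_{n \in \N} F_n \to \limpro_{n \in \N} G_n$ records the vanishing of the connecting map into the derived limit of $E_\bullet$. The one point demanding care, and the reason I deliberately argue on underlying vector spaces rather than transporting a long exact sequence from $\bBorn_k$ to $\bLoc_k$, is that ${}^t$ is not known to be exact; so the equivalence must be established at the underlying set level, where ${}^t$ is transparently the identity, instead of by comparing derived functors in the two categories separately.
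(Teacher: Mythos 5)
Your proposal is correct and rests on the same observation the paper uses: the functor ${}^t$ is the identity on underlying $k$-vector spaces and the forgetful functors commute with the relevant limits, so the two maps in question are literally the same map of sets. The only cosmetic difference is that you argue directly on the projective limits, whereas the paper's one-line proof routes the same reduction through Lemma \ref{lemma:born_proj} and the Roos-complex description of Lemma \ref{lem:RoosML}; the substance is identical, and your closing remark that the completeness and strict-exactness hypotheses are not needed for the set-theoretic equivalence itself is accurate.
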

\begin{proof}
	Immediate consequence of Lemma \ref{lemma:born_proj} combined with Lemma \ref{lem:RoosML}.
\end{proof}

\begin{rmk} \label{rmk:frechet}
What Lemma \ref{lemma:born_proj} does not imply is that strictly exactness is preserved, which is not true in general, even in very natural examples. Indeed, for any base field, one can find a Fr\'echet space $F$ and a closed subspace $E \subset F$ such that the quotient bornology of $\frac{F}{E}$ does not coincide with the von Neumann bornology of $\frac{F}{E}$. See Remark 3.73 of \cite{BaBeKr} for more informations on how to construct such examples.
\end{rmk}

Therefore, we can deduce the following corollary.

\begin{cor} \label{cor:proj_born_top}
	Let $\{ V_n \}_{n \in \N}$ be a projective system of bornological LB spaces. The map $\Delta_{V_\bullet}$ of the Roos complex of $\{ V_n \}_{n \in \N}$ is surjective (as map of bornological vector spaces) if and only if $\{ V_n \}_{n \in \N}$ satisfies one of the equivalent conditions of Theorem \ref{thm:LB_proj_0} or Theorem \ref{thm:LB_proj_0_strong}.
\end{cor}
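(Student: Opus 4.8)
The plan is to reduce the statement to the already-proved topological criteria of Theorem \ref{thm:LB_proj_0} and Theorem \ref{thm:LB_proj_0_strong} by transporting the projective system along the functor ${}^t$, using Lemma \ref{lemma:born_proj} as the bridge between the two settings. The first step is to check that $\{(V_n)^t\}_{n \in \N}$ is again a projective system of LB spaces, now living in $\bLoc_k$. Each $V_n$ is by definition a countable direct limit of Banach spaces in $\bBorn_k$, and the functor ${}^t$ of Definition \ref{defn:t_b_functors} is a left adjoint to ${}^b$ (indeed, a linear map $f$ defines an element of $\Hom_{\bLoc_k}(E^t, F)$ exactly when it maps bounded sets of $E$ to topologically bounded sets of $F$, which is the same as $f \in \Hom_{\bBorn_k}(E, F^b)$). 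Being a left adjoint, ${}^t$ commutes with the countable direct limits defining the LB structure, so each $(V_n)^t$ is a countable direct limit of Banach spaces in $\bLoc_k$, i.e. an LB space; applying ${}^t$ to the system morphisms $\pi_{n,m}$ turns $\{(V_n)^t\}$ into a genuine projective system.

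Next I would invoke Lemma \ref{lemma:born_proj}: the bornological map $\Delta_{V_\bullet}$ is surjective if and only if the locally convex map $\Delta_{(V_\bullet)^t}$ is surjective. Since $\{(V_n)^t\}$ is a projective system of LB spaces in $\bLoc_k$, Theorem \ref{thm:LB_proj_0} (resp. Theorem \ref{thm:LB_proj_0_strong}) applies to it verbatim and characterizes the surjectivity of $\Delta_{(V_\bullet)^t}$ by the stated conditions on Banach disks and system morphisms. Composing these two equivalences yields the corollary.

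The step requiring genuine care, and the main obstacle, is to verify that the data entering those conditions is the same whether it is read off from the bornological space $V_n$ or from the locally convex space $(V_n)^t$, so that the conditions really are conditions on $\{V_n\}$. Three observations settle this. The underlying $k$-vector spaces of $V_n$ and $(V_n)^t$ agree and the maps $\pi_{n,m}$ are the same linear maps, so the set-theoretic inclusions $\pi_{n,m}(B_m) \subset B_n$ and $\pi_{n,m}(V_m) \subset \pi_{n,l}(V_l) + B_n$ are literally the same statements in both pictures; moreover, since the forgetful functors to $\bSet$ commute with limits, the underlying set of $\limpro_i (V_i)^t$ equals that of $\limpro_i V_i$, so the subset $\pi_n(\limpro_i V_i)$ occurring in \eqref{eqn:statement} is unchanged. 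The delicate part is the matching of the \emph{Banach disks} themselves: a bornologically bounded disk of $V_n$ is automatically topologically bounded in $(V_n)^t$ (this is exactly the unit $V_n \to ((V_n)^t)^b$ of the adjunction, which is a bounded map), and being a Banach disk is an intrinsic property of a disk; this already gives that any bornological data satisfying the conditions also satisfies them for $(V_n)^t$, hence the implication from the conditions to surjectivity. For the reverse reading one uses that for the LB spaces in play the unit $V_n \to ((V_n)^t)^b$ is an isomorphism, so that the von Neumann bornology of $(V_n)^t$ recovers the original bornology and the two families of Banach disks coincide. With this dictionary between $V_n$ and $(V_n)^t$ in hand, the corollary is immediate.
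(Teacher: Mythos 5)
Your proposal is correct and follows exactly the paper's route: the paper's entire proof reads ``Immediate consequence of Lemma \ref{lemma:born_proj}'', i.e.\ surjectivity of $\Delta_{V_\bullet}$ is transferred to $\Delta_{(V_\bullet)^t}$ and then Theorems \ref{thm:LB_proj_0} and \ref{thm:LB_proj_0_strong} are applied to the locally convex system. Your additional care in matching the Banach disks and bounded sets between $V_n$ and $(V_n)^t$ goes beyond what the paper records and is welcome, though strictly speaking the corollary only needs the conditions read on $\{(V_n)^t\}$, so the (delicate, regularity-dependent) claim that $V_n \to ((V_n)^t)^b$ is an isomorphism is not actually required.
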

\begin{proof}
Immediate consequence of Lemma \ref{lemma:born_proj}.
\end{proof}

Finally, we need to find conditions to ensure that the $\Delta$ map of the Roos complex is a quotient map of bornological vector spaces. We start with the bornological version of Mittag-Leffler Lemma.

\begin{lemma} \label{lem:BornML} 
	Let $\{ V_n \}_{n \in \N}$ be a projective system of Fr\'{e}chet spaces in $\bBorn_k$, where all system morphisms are dense and $\underset{i \in \N}\limpro V_{i}$ is nuclear (in the bornological sense see Definition \ref{defn:nuclear_born}). Then, the Roos complex
	\begin{equation} 
	0 \to \limpro_{i \in \N}V_{i} \to \prod_{i \in \N}V_{i} \stackrel{\Delta}\to \prod_{i \in \N}V_{i} \to 0
	\end{equation}
	is strictly exact.
\end{lemma}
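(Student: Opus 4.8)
The plan is to reduce, via Corollary \ref{cor:Weibel}, the strict exactness of the Roos complex to the single assertion that $\Delta$ is a strict epimorphism in $\bBorn_k$. Indeed, $\limpro_{i} V_i$ is by construction the kernel of $\Delta$ and carries the induced subspace bornology, so the strict monomorphism $\limpro_i V_i \to \prod_i V_i$ and the exactness in the middle degree are automatic; everything rests on showing that $\Delta$ is surjective and that it induces the quotient bornology on the target $\prod_i V_i$.

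Surjectivity I would obtain by transporting the question to $\bLoc_k$. Writing $V_i = E_i^b$ for Fr\'echet spaces $E_i$, the system $\{ E_i \} = \{ (V_i)^t \}$ is a projective system of Fr\'echet spaces with dense transition maps, so Lemma \ref{lem:TopML} applies and shows that
\[ 0 \to \limpro_{i \in \N} E_i \to \prod_{i \in \N} E_i \stackrel{\Delta^t}{\to} \prod_{i \in \N} E_i \to 0 \]
is strictly exact in $\bLoc_k$; in particular $\Delta^t$ is a topological quotient map, and \emph{a fortiori} surjective. By Lemma \ref{lemma:born_proj} the map $\Delta$ is then surjective in $\bBorn_k$ as well, with kernel $K \doteq \limpro_i V_i$.

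The genuinely delicate point is the bornological strictness. Applying the functor ${}^b$ to the topologically strictly exact sequence above gives exactly the sequence in the statement, which we now know to be a surjective complex; what is not formal, and what Remark \ref{rmk:frechet} warns may fail for a general Fr\'echet quotient, is that the product (von Neumann) bornology on the target coincides with the quotient bornology transported by $\Delta$. This is where the hypothesis that $K$ is nuclear (Definition \ref{defn:nuclear_born}) is used. Observe first that $K$, being a countable projective limit of Fr\'echet spaces, is itself a Fr\'echet space, so bornological nuclearity makes it a nuclear Fr\'echet space, hence Fr\'echet--Schwartz and in particular Montel. The functional-analytic input I would invoke is a lifting of bounded sets: for a topological quotient map of Fr\'echet spaces whose kernel is nuclear, every bounded subset of the quotient is the image of a bounded subset of the source. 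Concretely, given a product-bounded $B \subset \prod_i E_i$, one lifts it through the open map $\Delta^t$ and corrects the ambiguity, which ranges in $K$: because the Banach disks generating the nuclear structure of $K$ are linked by nuclear (hence compact) maps, the telescoping corrections can be summed into a single bounded set, so $B$ lifts to a bounded set of the source. Granting this, the quotient and von Neumann bornologies on $\prod_i V_i$ agree, $\Delta$ is a strict epimorphism, and the Roos complex is strictly exact.

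The main obstacle is this last lifting step. Surjectivity and the monomorphism part follow softly from the topological Mittag--Leffler lemma and from Lemma \ref{lemma:born_proj}, whereas controlling the bornology of the quotient is exactly the phenomenon that can break down (Remark \ref{rmk:frechet}); the nuclearity of $K$ is the precise hypothesis that repairs it, and making the bounded-set lifting rigorous — in both the Archimedean and non-Archimedean cases, along the lines of \cite{Wen} and \cite{H2} — is where the real work lies.
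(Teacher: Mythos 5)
You have set the problem up correctly: by Corollary \ref{cor:Weibel} everything reduces to $\Delta$ being a strict epimorphism, surjectivity does follow from Lemma \ref{lem:TopML} transported by Lemma \ref{lemma:born_proj}, and you have located the difficulty in exactly the right place, namely that the quotient bornology on the target must coincide with the product von~Neumann bornology. But that step is the entire content of the lemma (it is precisely the phenomenon Remark \ref{rmk:frechet} warns about), and your proposal does not prove it. The lifting principle you invoke --- ``for a topological quotient map of Fr\'echet spaces with nuclear kernel, every bounded subset of the quotient is the image of a bounded subset of the source'' --- is stated without proof or reference, and the one-sentence sketch about summing telescoping corrections does not establish it. Two concrete problems: first, the classical Archimedean results of this type (quasinormable or Schwartz kernel) give lifting of bounded sets only \emph{with closure}, i.e. $B\subset\overline{q(A)}$, which is not enough to conclude strictness in $\bBorn_k$, so an exact lifting statement needs a genuinely finer argument; second, whatever statement you settle on must be proved over an arbitrary (possibly non-Archimedean) valued field, which none of the standard references cover. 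As you yourself concede, ``this is where the real work lies'' --- so what you have is a correct reduction plus an unproved key lemma, not a proof.

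For comparison, the paper avoids the lifting problem entirely and never leaves $\bBorn_k$: it observes that it suffices to know that the middle terms $\prod_{i\in\N}V_i$ of the Roos complex are themselves nuclear bornological spaces (this is where the hypothesis that $\limpro_{i\in\N}V_i$ is nuclear enters, together with the stability of bornological nuclearity under countable products, Proposition 3.51 of \cite{BaBeKr}), and then applies the bornological Mittag--Leffler result for nuclear spaces, Proposition 3.79 of \cite{BaBeKr}, in which the strictness of the quotient is already built into the nuclearity. If you want to salvage your route, the missing bounded-set lifting lemma is the thing you must actually prove, in a form valid over any valued base field and yielding exact (not closure) lifting.
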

{\bf Proof.} 
It is enough to check that also $\underset{i \in \N}\prod V_{i}$ is a nuclear bornological space, and apply 3.79 from \cite{BaBeKr}.

To see that $\underset{i \in \mathbb{N}}\prod V_{i}$ is nuclear one can write it as
\[ \prod_{i \in \mathbb{N}}V_{i} \cong \limpro_{n \in \N} \prod_{i \le n} V_{i} \] 
where the maps of the projective limit on the right hand side are the canonical projections $\underset{i \le n + 1}\prod V_{i} \to \underset{i \le n}\prod V_{i}$. Defining 
\[ \tilde{V}_{i, n} \doteq \begin{cases}
 V_i, \ \ \text{if } i \le n \\
 0 \ \ \text{otherwise}
\end{cases} \]
one easily checks that 
\[ \limpro_{n \in \N} \prod_{i \le n} V_{i} \cong \limpro_{n \in \N} \prod_{i \in \N} \tilde{V}_{i, n} \cong \prod_{i \in \N} \limpro_{n \in \N} \tilde{V}_{i, n} \] 
and $\underset{n \in \N}\limpro \tilde{V}_{i, n} \cong \underset{n \in \N}\limpro V_n$ (non-canonically) is nuclear for each $i$. Therefore, applying Proposition 3.51 from \cite{BaBeKr} we obtain that $\underset{i \in \mathbb{N}} \prod V_{i}$ is nuclear.
\hfill $\Box$

\begin{rmk}
Notice that in Lemma \ref{lem:BornML} there is the additional, important, hypothesis that $\underset{i \in \N}\limpro V_{i}$ is nuclear with respect to Lemma \ref{lem:TopML}. Indeed, without this hypothesis one can find bornological Fr\'echet spaces for which the lemma does not hold (as mentioned in Remark \ref{rmk:frechet}), in contrast with the topological case of the Mittag-Leffler Lemma.
\end{rmk}

Finally, we prove our result which ensure that $\Delta_{V_{\bullet}}$ is automatically a quotient map for a projective system of bornological LB spaces.

\begin{thm} \label{thm:born_LB_quotient}
	Let $\{ V_n \}_{n \in \N}$ be a projective system of bornological LB spaces such that $\Delta_{V_{\bullet}}$ is surjective. Then, $\Delta_{V_{\bullet}}$ is a quotient map. 
\end{thm}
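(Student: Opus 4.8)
The plan is to deduce the statement from its locally convex counterpart, Theorem \ref{thm:web_quotient}, and then to upgrade the conclusion from a topological to a bornological quotient map by exploiting the special nature of the webs produced in Lemma \ref{lemma:web_LB}. First I would apply the functor ${}^t$: since $\Delta_{V_\bullet}$ is surjective in $\bBorn_k$, Lemma \ref{lemma:born_proj} gives that $\Delta_{V_\bullet}^t$ is surjective in $\bLoc_k$, where $\{ (V_i)^t \}_{i \in \N}$ is now a projective system of locally convex LB spaces. By Lemma \ref{lemma:web_LB} each $(V_i)^t$ carries a strict ordered web, so Theorem \ref{thm:web_quotient} applies and shows that $\Delta_{V_\bullet}^t$ is a quotient map of locally convex spaces.

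The key observation --- and the reason the theorem needs no extra hypothesis, in contrast with the bornological Mittag--Leffler Lemma \ref{lem:BornML} --- is that the web constructed in Lemma \ref{lemma:web_LB} has all of its pieces $\cW^n(s(0), \dots, s(i))$ equal to scalar multiples of members of a base of the canonical bornology of $V_n$, so that every web piece is a \emph{bounded} subset. I would then revisit the proof of Theorem \ref{thm:web_quotient}: Lemma \ref{lemma:web_quotient} furnishes a sequence $s \in \N^\N$ such that for each $n$ there is $m \ge n$ with
\[ \pi_{n, m}(V_m) \subset \pi_n(\limpro_{i \in \N} V_i) + \bigcap_{j = 1}^n \pi_{j, n}^{-1}(\cW^j(s(j), \dots, s(n))). \]
In the topological argument one bounds the web piece $\cW^n(s(n), \dots, s(j))$ inside a rescaled neighbourhood and invokes Lemma \ref{lemma:web_quotient2}. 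In the bornological setting I would instead use the strictness of the web to run the lifting construction of Theorem 3.3.3 of \cite{Wen}, keeping the partial sums inside prescribed bounded web pieces, so that the element of $\prod_{i \in \N} V_i$ produced to lift a given point of the target lies in a bounded set depending only on the bounded set one started from.

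This shows that every bounded subset of the target $\prod_{i \in \N} V_i$ is the image under $\Delta_{V_\bullet}$ of a bounded subset of the source, which is exactly the assertion that $\Delta_{V_\bullet}$ induces the quotient bornology, \ie that it is a strict epimorphism of $\bBorn_k$; equivalently, by Corollary \ref{cor:Weibel}, that $\R \limpro_{i \in \N} V_i \cong \limpro_{i \in \N} V_i$. I expect the only delicate point to be precisely this uniform boundedness: the strict ordered web guarantees the convergence of the relevant series --- which already yields the topological quotient property through Lemma \ref{lemma:web_quotient2} --- but one must check that the limit lands in a bounded set chosen uniformly over the points of the prescribed target-bounded set, and that this choice can be made from the bornology rather than from a neighbourhood basis. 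This uniformity has no counterpart in the purely topological statement and is exactly where the bornological nature of the web of an LB space is used.
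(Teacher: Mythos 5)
Your first step (pass to $\bLoc_k$ via ${}^t$, invoke Lemma \ref{lemma:web_LB} and Theorem \ref{thm:web_quotient} to get that $\Delta_{V_\bullet}^t$ is a topological quotient map) is correct but does not by itself advance the bornological claim: as the paper's own Remark \ref{rmk:frechet} stresses, strict exactness is \emph{not} transported back along ${}^t$/${}^b$, so the topological quotient property of $\Delta_{V_\bullet}^t$ gives no information about whether $\Delta_{V_\bullet}$ induces the quotient bornology. Everything therefore rests on your second step, and that step is asserted rather than proved. What you need is: for every bounded subset $\prod_{n} B_n$ of the target there is a single bounded subset $\prod_n A_n$ of the source with $\Delta_{V_\bullet}(\prod_n A_n) \supset \prod_n B_n$. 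The topological machinery you invoke has no bornological counterpart in the paper: Lemma \ref{lemma:web_quotient2} (the bridge from the condition $\pi_{n,m}(V_m) \subset \pi_n(\limpro V_i) + U$ to the quotient property) is a genuinely topological statement, and there is no analogue with $U$ replaced by bounded sets. Observing that the web pieces of Lemma \ref{lemma:web_LB} are bounded only recovers condition (\ref{eqn:statement}) of Theorem \ref{thm:LB_proj_0_strong}, which is \emph{equivalent to surjectivity} and hence cannot alone yield the quotient property. Moreover, the recursion $a_n = b_n + \pi_{n,n+1}(a_{n+1})$ has no starting point, so one cannot solve coordinatewise inside prescribed bounded sets; the "uniform boundedness of the liftings" that you yourself flag as the delicate point is in fact the entire content of the theorem, and no mechanism for it is supplied.

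The paper's proof proceeds quite differently and avoids webs altogether. It factors $\Delta_{V_\bullet}$ through the two-term maps $\Delta' : V_{n+1} \times V_n \to V_n$, which are surjective because $\Delta_{V_\bullet}$ is and the projections are bornological quotient maps. Since $V_{n+1} \times V_n$ has a \emph{countable base} for its bornology, Buchwalter's Theorem (Theorem 4.9 of \cite{Bam2}, the bornological open mapping theorem) applies and shows each $\Delta'$ is a bornological quotient map; the bounded liftings so obtained are then assembled over $n$ into a bounded subset of $\prod_n V_n$ surjecting onto $\prod_n B_n$. This localization is essential precisely because Buchwalter's Theorem does not apply to the infinite product $\prod_n V_n$ (which has no countable base), and it is the ingredient your proposal is missing. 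If you want to keep your web-based outline, you would have to prove a bornological version of Lemma \ref{lemma:web_quotient2} from scratch, which is not a routine adaptation.
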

\begin{proof}
The bornology of $\underset{n \in \N} \prod V_n$ is given by the bornology generated by subsets of the form $\underset{n \in \N}\prod B_n$, for $B_n \subset V_n$ bounded. By the explicit definition of $\Delta_{V_\bullet}$, there is a commutative diagram
	\[
\begin{tikzpicture}
\matrix(m)[matrix of math nodes,
row sep=2.6em, column sep=2.8em,
text height=1.5ex, text depth=0.25ex]
{  \underset{n \in \N}\prod V_n  & \underset{n \in \N}\prod V_n  \\
   V_{n+ 1} \times V_n & V_n   \\};
\path[->,font=\scriptsize]
(m-1-1) edge node[auto] {$\Delta_{V_{\bullet}}$} (m-1-2);
\path[->,font=\scriptsize]
(m-1-1) edge node[auto] {$\pi'$} (m-2-1);
\path[->,font=\scriptsize]
(m-1-2) edge node[auto] {$\pi_n$}  (m-2-2);
\path[->,font=\scriptsize]
(m-2-1) edge node[auto] {$\Delta'$}  (m-2-2);
\end{tikzpicture},
\]
for each $n \in \N$. So, $\Delta'$ is also surjective (because $\pi'$ is a quotient map). Notice that $V_{n + 1} \times V_n$ has a countable base for its bornology. Therefore, we can apply Buchwalter’s Theorem (Theorem 4.9 of \cite{Bam2}) to deduce that $\Delta'$ is a quotient map. This implies that for a given $\underset{n \in \N}  \prod B_n \subset \underset{n \in \N} \prod V_n$ we can consider the pre-images $(\Delta' \circ \pi')^{-1}{B_n} = \underset{i \ne n, n + 1} \prod \{0 \} \times (\Delta')^{-1}(B_n)$ and that
\[ \bigcup_{n \in \N} (\Delta' \circ \pi')^{-1}{B_n} \]
is bounded in $\underset{n \in \N} \prod V_n$, because it is easy to verify that for each $i$
\[ \pi_i(\bigcup_{n \in \N} (\Delta' \circ \pi')^{-1}{B_n}) \subset \{0\} \cup B_i \cup \pi_i((\Delta' \circ \pi')^{-1}(B_{i -1})). \]
By construction
\[ \Delta_{V_\bullet}(\bigcup_{n \in \N} (\Delta' \circ \pi')^{-1}{B_n}) \supset \prod_{n \in \N} B_n \]
which proves that $\Delta_{V_\bullet}$ is a quotient map.
\end{proof}


\section{Theorems A and B for quasi-Stein spaces} \label{sec:thm_B}


We begin this secion by recalling the definition of quasi-Stein spaces due to Kiehl. As usual the non dagger version of teh definitions are meaningful only for non-Archimedean base fields.

\begin{defn}
	A non-Archimedean $k$-analytic space $X$ is said \emph{quasi-Stein} if it admits an affinoid covering $U_1 \subset U_2 \subset ... $ such that
	\[ X = \bigcup_{i \in \N} U_i \]
	and the restrictions maps $\cO_X(U_{i+1}) \to \cO_X(U_i)$ are Weierstrass localizations. If $U_i$ is contained in the interior of $U_{i+1}$ then $X$ is said to be \emph{Stein}.
\end{defn}

\begin{defn}
	A $k$-Fr\'echet algebra $A$ is said \emph{quasi-Stein} if it admits an isomorphism
	\[ A = \limpro_{i \in \N} A_i \]
	where $A_i$ are affinoid algebras and the maps $A_{i + 1} \to A_i$ are Weierstrass localizations. If $\cM(A_i)$ is contained in the interior of $\cM(A_{i+1})$ then $X$ is said to be \emph{Stein}.
\end{defn}

The definition of quasi-Stein space has an obvious dagger version.

\begin{defn}
	A $k$-dagger analytic space $X$ is said \emph{quasi-Stein} if it admits an affinoid covering $U_1 \subset U_2 \subset ... $ such that
	\[ X = \bigcup_{i \in \N} U_i \]
	and the restrictions maps $\cO_X(U_{i+1}) \to \cO_X(U_i)$ are Weierstrass localizations. If $U_i$ is contained in the interior of $U_{i+1}$ then $X$ is said to be \emph{Stein}.
\end{defn}

\begin{rmk}
	If $k$ is non-Archimedean the request of $\cO_X(U_{i+1}) \to \cO_X(U_i)$ to be Weierstrass localizations is equivalent to ask that the set-theoretic images of the restriction maps are bornologically dense. For $k$ Archimedean Weierstrass localizations are always dense but also non-Weierstrass localizations can have dense images.
\end{rmk}

\begin{defn}
	A $k$-bornological algebra $A$ is said \emph{quasi-Stein} if it admits an isomorphism
	\[ A = \limpro_{i \in \N} A_i \]
	where $A_i$ are dagger affinoid algebras and the maps $A_{i + 1} \to A_i$ are Weierstrass localizations. If $\cM(A_i)$ is contained in the interior of $\cM(A_{i+1})$ then $X$ is said to be \emph{Stein}.
\end{defn}


For the sake of clarity we recall the definition of coherent sheaf.

\begin{defn}
Let $(X, \cO_X)$ be a ringed space. A sheaf of $\cO_X$-modules $\sF$ is said to be coherent if 
\begin{itemize}
\item $\sF$ is of \emph{finite type}, \ie for each $x \in X$ there exists an open neighborhood $x \in U$ such that $\sF(U)$ is a finite $\cO_X(U)$-module;
\item for any open set $U \subset X$, any $n \in \N$ and any morphism $\varphi \colon \cO_X^n|_U \to \sF|_U$ of $\cO_X$-modules, the kernel of $\varphi$ is finitely generated.
\end{itemize}
We denote the category of coherent sheaves over $(X, \cO_X)$ by $\bCoh(X)$.
\end{defn}

Our aim is to prove that dagger quasi-Stein spaces satisfy the property stated in the next definition, which is a problem left open by the work of Grosse-Kl\"onne \cite{GK}.

\begin{defn}
We say that a locally ringed space $(X, \cO_X)$ \emph{satisfies the Theorem B} if for every $\sF \in \bCoh(X)$ one has that
	\[ H^i(X, \sF) = 0, \ \ \forall i > 0. \]
\end{defn}

\begin{defn}
Let $(X, \cO_X)$ be a locally ringed space. We say that a subspace $Y \subset X$ is \emph{closed} if $Y = \{ x \in X | f(x) = 0, \forall f \in \sI  \}$, where $\sI$ is a coherent ideal of $\cO_X$. The canonical inclusion morphism $(Y, \frac{\cO_X}{\cI}) \to (X, \cO_X)$ is called a \emph{closed immersion}.
\end{defn}

\begin{lemma} \label{lemma:thm_B_closed_immersion}
Let $(X, \cO_X)$ be a locally ringed space which satisfies Theorem B and $(Y, \cO_Y) \rhook (X, \cO_X)$ a closed immersion. Then, $Y$ satisfies Theorem B.
\end{lemma}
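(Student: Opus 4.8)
The plan is to push a coherent sheaf on $Y$ forward to $X$ along the closed immersion $\iota \colon (Y, \cO_Y) \rhook (X, \cO_X)$, where $\cO_Y = \cO_X / \cI$ for the coherent ideal $\cI$ defining $Y$, and then to invoke Theorem B for $X$. So fix $\sG \in \bCoh(Y)$; I claim that $H^i(Y, \sG) \cong H^i(X, \iota_* \sG)$ for all $i$, and that $\iota_* \sG$ is coherent on $X$. Granting both, Theorem B for $X$ gives $H^i(X, \iota_* \sG) = 0$ for $i > 0$, whence $H^i(Y, \sG) = 0$ for $i > 0$; since $\sG$ is arbitrary, $Y$ satisfies Theorem B.

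For the cohomological comparison, recall that $\iota$ is a homeomorphism of $Y$ onto the closed subset $\{ x \in X \mid f(x) = 0, \forall f \in \cI \} \subset X$. Consequently the direct image functor $\iota_*$ on sheaves of abelian groups is exact: its effect on stalks is $(\iota_* \sG)_x = \sG_x$ for $x \in Y$ and $(\iota_* \sG)_x = 0$ for $x \notin Y$, the latter because the complement of $Y$ is open, so that some neighborhood of $x$ has empty preimage. Moreover $\iota_*$ carries flasque sheaves to flasque sheaves. Hence, choosing a flasque (Godement) resolution $\sG \to \sG^\bullet$ on $Y$ and applying $\iota_*$, one obtains a flasque resolution $\iota_* \sG \to \iota_* \sG^\bullet$ on $X$. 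Since $\Gamma(X, \iota_* \sH) = \Gamma(Y, \sH)$ for every sheaf $\sH$ on $Y$, taking global sections of the two resolutions yields literally the same complex, and therefore $H^i(Y, \sG) \cong H^i(X, \iota_* \sG)$ for all $i$.

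The main obstacle is to show that $\iota_* \sG$ is coherent as an $\cO_X$-module. Here one uses that the structure sheaf $\cO_X$ of the analytic spaces under consideration is a coherent sheaf of rings, a consequence of the fact that the (dagger) affinoid algebras building them are Noetherian. Since $\cI$ is coherent, the quotient $\cO_Y = \cO_X / \cI$ is then a coherent $\cO_X$-module. Now $\sG$, viewed through the surjection $\cO_X \to \cO_Y$, is an $\cO_X$-module, and locally it admits a finite presentation $\cO_Y^m \to \cO_Y^n \to \sG \to 0$ by $\cO_Y$-modules; as $\cO_Y^m$ and $\cO_Y^n$ are coherent over $\cO_X$ and the category of coherent $\cO_X$-modules is closed under cokernels and kernels, $\sG$ is coherent over $\cO_X$. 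Since $\iota$ identifies $Y$ with a closed subset, this is exactly the statement that $\iota_* \sG \in \bCoh(X)$.

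Combining the two steps with Theorem B for $X$ completes the argument. I expect the coherence transfer of the previous paragraph to be the only delicate point: it rests entirely on $\cO_X$ being a coherent sheaf of rings, a property that fails for a general locally ringed space but holds for the spaces at hand. The cohomological comparison, by contrast, is a purely topological fact about closed embeddings and is routine.
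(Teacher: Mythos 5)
Your proposal is correct and follows the same route as the paper, which simply cites Theorem 6.1.10 of Poineau for the cohomological identification $H^i(Y,\sG)\cong H^i(X,\iota_*\sG)$ and Serre (FAC, Th\'eor\`eme 3, \S 16) for the coherence of $\iota_*\sG$; you have merely written out the content of those two references. The only remark worth making is that your coherence step leans on $\cO_X$ being a coherent sheaf of rings, which is a harmless but slightly stronger input than strictly needed (the kernel of $\cO_X^n|_U\to\iota_*\sG|_U$ is an extension of a finite-type sheaf by $\cI^n$, so coherence of $\cI$ and of $\sG$ over $\cO_Y$ already suffice), and you correctly flag that it holds for the analytic spaces to which the lemma is applied.
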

\begin{proof}
We can apply Theorem 6.1.10 of \cite{Poi} because the push-forward of a coherent sheaf by a closed immersion is a coherent sheaf (cf. Theorem 3 \S 16 of \cite{Serre}).
\end{proof}

We also notice that the statement known as Theorem A, \ie that all coherent sheaves are generated by global sections, is a formal consequence of Theorem B, as next proposition recall. Therefore, we focus on the study of Theorem B.

\begin{prop}
Let $(X, \cO_X)$ be a locally ringed space which satisfies Theorem B, then all coherent sheaves over $X$ are generated by global sections.
\end{prop}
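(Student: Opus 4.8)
The plan is to reduce the global generation statement to a pointwise one and then feed it into the vanishing of $H^1$ supplied by Theorem B. Fix $\sF \in \bCoh(X)$ and a point $x \in X$. Since $\sF$ is of finite type, the stalk $\sF_x$ is a finitely generated module over the local ring $\cO_{X,x}$. By Nakayama's lemma it therefore suffices to prove that the images of the global sections $H^0(X,\sF) = \Gamma(X,\sF)$ span the fibre $\sF_x / \fm_x \sF_x$ over the residue field $\kappa(x) = \cO_{X,x}/\fm_x$: once this is known for every $x \in X$, Nakayama shows that the global sections generate each stalk $\sF_x$ over $\cO_{X,x}$, which is exactly the assertion that $\sF$ is generated by its global sections.

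To exhibit the fibre as a cohomological quotient I would realise the reduced point $\{x\}$ as a closed subspace of $(X,\cO_X)$, cut out by its coherent ideal sheaf $\sI_x$, so that $(\sI_x)_x = \fm_x$ while $(\sI_x)_y = \cO_{X,y}$ for $y \neq x$. Then $\sI_x\sF$, being a finite-type subsheaf of the coherent sheaf $\sF$, is again coherent, and the quotient $\sF/\sI_x\sF$ is a coherent sheaf with stalk $\sF_x/\fm_x\sF_x$ at $x$ and $0$ elsewhere; in other words it is the skyscraper sheaf at $x$ with value the fibre. In particular $H^0(X, \sF/\sI_x\sF) \cong \sF_x/\fm_x\sF_x$.

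Now I would invoke Theorem B. The short exact sequence of coherent sheaves
\[ 0 \to \sI_x\sF \to \sF \to \sF/\sI_x\sF \to 0 \]
yields the exact cohomology sequence
\[ H^0(X, \sF) \to H^0(X, \sF/\sI_x\sF) \to H^1(X, \sI_x\sF). \]
Because $\sI_x\sF$ is coherent, Theorem B gives $H^1(X, \sI_x\sF) = 0$, so the map $H^0(X,\sF) \to \sF_x/\fm_x\sF_x$ is surjective. This is precisely the surjectivity onto the fibre demanded in the first step, and the Nakayama reduction there then completes the proof.

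The step I expect to be the main obstacle is the construction underlying the second paragraph: verifying that a single point carries a coherent ideal sheaf whose quotient is a skyscraper with the correct stalk. This is the only place where more than formal homological algebra is needed, and it is where the geometry of the ambient analytic space enters; in the full generality of an arbitrary locally ringed space it must be read as the standing requirement that reduced points are closed subspaces in the sense defined above, a condition satisfied in all the (dagger) analytic situations to which the proposition is applied. The remaining ingredients — coherence of $\sI_x\sF$, the computation of the skyscraper cohomology, and the Nakayama step — are routine.
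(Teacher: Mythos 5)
Your argument is correct and is the classical Cartan--Serre--Kiehl deduction of Theorem A from Theorem B; the paper itself gives no proof, deferring to Theorem 6.1.9 of \cite{Poi}, which proceeds in essentially the same way (vanishing of $H^1(X,\sI_x\sF)$ for the coherent ideal sheaf $\sI_x$ of a point, followed by Nakayama). The only caveat is the one you already flag yourself: for a bare locally ringed space the existence of a coherent ideal sheaf cutting out $\{x\}$ as a closed subspace is a genuine extra hypothesis, but it holds in all the analytic settings in which the proposition is applied here.
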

\begin{proof}
See \cite{Poi}, Theorem 6.1.9.
\end{proof}

\subsection{Coherent sheaves on analytic spaces}

As stated in the previous section, Theorem B is a purely algebo-geometric statement. But when the base locally ringed space is an analytic space coherent sheaves are canonically equipped with a richer structure. Indeed, on the spaces of sections one can put canonically locally convex topologies and bornologies of convex type. We do not enter here in a detailed explanation of how to do it (anyway it will be clear from our proof of Theorem B). We refer the reader to \S V.6 of \cite{GR} for a description of the Fr\'echet topology on the modules of sections of a coherent sheaf over a complex analytic spaces. For the non-Archimedean case, we can refer Chapter 2 of \cite{Ber} where it is shown how to put Banach norms on spaces of sections and we refer to Chapter 6 of \cite{Bam} for the dagger analytic cases, where LB bornologies are put on the spaces of sections of a coherent sheaf over a (dagger) affinoid space. Therefore, it is meaningful to consider a coherent sheaf over an analytic space $X$ not only as a sheaf of abelian groups but also as a sheaf of locally convex or bornological vector spaces.

Considering this additional structure broads the methods we can use to analyze them, and in particular it permits to apply the functional analytic results explained so far. Moreover, as proved in Proposition 2.2.7 of \cite{ScQA}, the category of sheaves with values on a quasi-abelian category is quasi-abelian and therefore it makes sense to calculate the derived functors of the global section functor using the theory of quasi-abelian categories. 

In particular, the calculation of the cohomology of coherent sheaves, thought as sheaves with values in locally convex of bornological vector spaces, gives more informations about the sheaves because not only the exactness of the maps is detected but also the strictly exactness, as next lemma explains.

\begin{lemma} \label{lemma:quasi}
	Let $\sF$ be a coherent sheaf over an analytic space $X$ such that
	\[ \R \Gamma(X, \sF) = \Gamma(X, \sF) \]
	in $D(\bLoc_k)$ or $D(\bBorn_k)$, then $\R \Gamma(X, \sF) = \Gamma(X, \sF)$ in $D(\bAb)$.
\end{lemma}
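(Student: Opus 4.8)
The plan is to exploit the faithful forgetful functor $F\colon \bLoc_k \to \bAb$ (respectively $F \colon \bBorn_k \to \bAb$) sending a space to its underlying abelian group, and to show that it carries the hypothesis (vanishing of the higher cohomology in the \emph{strict}, quasi-abelian sense) to the conclusion (vanishing of the ordinary sheaf cohomology). First I would check that $F$ is exact in the relevant sense. Given a short strictly exact sequence $0 \to X \to Y \to Z \to 0$ in $\bLoc_k$, the map $X \to Y$ is injective (monomorphisms in $\bLoc_k$ and $\bBorn_k$ are injective maps) and $Y \to Z$ is surjective by property (5) of the Proposition on $\bLoc_k$ and $\bBorn_k$ (strict epimorphisms are surjective), while the equality of the image of $X$ with the kernel of $Y \to Z$ persists after forgetting the topology or bornology. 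Hence $0 \to F(X) \to F(Y) \to F(Z) \to 0$ is short exact in $\bAb$, so $F$ sends strictly exact complexes to exact complexes; by Remark \ref{rmk:LH_0}, if $LH^n(V_\bullet) \cong 0$ for all $n$ then $H^n(F(V_\bullet)) = 0$ for all $n$.

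Next I would establish the compatibility $F \circ \R\Gamma = \R\Gamma \circ F$ on a single explicit resolution. By property (3) of the same Proposition the forgetful functor commutes with limits, so $F(\Gamma(X,\sF)) = \Gamma(X, F\sF)$. To derive this, I would represent $\R\Gamma(X,\sF)$ in $D(\bLoc_k)$ by the \v{C}ech complex of $\sF$ for an affinoid (respectively dagger affinoid) covering, carrying its natural locally convex or bornological structure; the members of such a covering are acyclic for coherent sheaves both in the quasi-abelian framework and in $\bAb$ (Kiehl's theorem), so this complex computes $\R\Gamma$ in $D(\bLoc_k)$ and, after applying $F$, computes $\R\Gamma$ in $D(\bAb)$, since $F$ commutes with the products and differentials. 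For a quasi-Stein $X$ this is the Weibel complex $\prod_{i} \Gamma(U_i,\sF) \stackrel{\Delta}{\longrightarrow} \prod_{i} \Gamma(U_i,\sF)$ of Lemma \ref{lem:RoosML}, whose $F$-image is the classical two-term complex computing $\limpro$ and $\R^1\limpro$ of the tower $\{ \Gamma(U_i,\sF) \}_{i}$.

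Then I would conclude. The hypothesis says the representing complex $C^\bullet$ is isomorphic in $D(\bLoc_k)$ to $\Gamma(X,\sF)$ in degree $0$, so the cone of $\Gamma(X,\sF) \to C^\bullet$ is strictly exact; applying $F$ and the exactness of the first step, the cone of $F(\Gamma(X,\sF)) \to F(C^\bullet)$ is exact, whence $F(C^\bullet)$ has cohomology $F(\Gamma(X,\sF))$ in degree $0$ and $0$ elsewhere. As $F(C^\bullet)$ computes $\R\Gamma$ in $D(\bAb)$, this gives $H^i(X,\sF) = 0$ for $i > 0$, that is $\R\Gamma(X,\sF) = \Gamma(X,\sF)$ in $D(\bAb)$. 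In the Weibel incarnation this is immediate: by Corollary \ref{cor:Weibel} the hypothesis amounts to $\Delta$ being a strict epimorphism, hence surjective by property (5), so $F(\Delta)$ is surjective and the ordinary $\R^1\limpro$, its cokernel, vanishes.

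The main obstacle is precisely the intertwining $F \circ \R\Gamma = \R\Gamma \circ F$: the two cohomologies are a priori computed by different derivation procedures, one inside $\bLoc_k$ or $\bBorn_k$ and one inside $\bAb$, and the content of the lemma is that the exact, limit-preserving functor $F$ matches them. I would neutralize this by fixing a single representing complex coming from an acyclic covering, so that both cohomologies are read off the same underlying complex of abelian groups; the only genuine inputs are the acyclicity of affinoids (available from Kiehl's theorem, re-proved in this framework) together with properties (3) and (5) above. The argument applies verbatim to $\bBorn_k$, since both cited properties hold there as well.
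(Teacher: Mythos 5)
Your proposal is correct and rests on exactly the same observation as the paper's (one-line) proof: the underlying complex of abelian groups of a strictly exact complex in $\bLoc_k$ or $\bBorn_k$ is an exact complex of abelian groups, so the vanishing transfers along the forgetful functor. The extra care you take in fixing a single representing complex (an acyclic \v{C}ech resolution) to identify the two derived functors is a legitimate filling-in of a step the paper leaves implicit, but it is not a different argument.
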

\begin{proof}
The underlying complex of abelian groups of a strictly exact complex of locally convex of bornological vector space is an exact complex of abelian groups.
\end{proof}

The converse of Lemma \ref{lemma:quasi} does not need to hold.

\begin{lemma} \label{lemma:thm_B_closed_immersion_strong}
	Let $(X, \cO_X)$ be a analytic space which satisfies Theorem B when coherent sheaves are thought as sheaves with value in $\bLoc_k$ or $\bBorn_k$ and $(Y, \cO_Y) \rhook (X, \cO_X)$ a closed immersion. Then, coherent sheaves over $Y$ have vanishing cohomology in $D(\bLoc_k)$ or $D(\bBorn_k)$ too.
\end{lemma}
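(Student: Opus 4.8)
The plan is to mirror the proof of Lemma \ref{lemma:thm_B_closed_immersion}, but to carry the functional-analytic structure through the argument inside the quasi-abelian category of sheaves valued in $\bLoc_k$ (resp. $\bBorn_k$), which is quasi-abelian by Proposition 2.2.7 of \cite{ScQA}. Write $i \colon (Y, \cO_Y) \rhook (X, \cO_X)$ for the closed immersion and let $\sF \in \bCoh(Y)$. First I would record that the push-forward $i_* \sF$ is again coherent on $X$ (Theorem 3, \S 16 of \cite{Serre}), and that for an open $U \subseteq X$ one has $(i_*\sF)(U) = \sF(U \cap Y)$, so that $i_*\sF$ is canonically a sheaf with values in $\bLoc_k$ (resp. $\bBorn_k$) and, in particular, $\Gamma(X, i_*\sF) \cong \Gamma(Y, \sF)$ as objects of $\bLoc_k$ (resp. $\bBorn_k$), not merely as abelian groups.

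The key step is to verify that $i_*$, regarded as a functor between the quasi-abelian categories of $\bLoc_k$- (resp. $\bBorn_k$-) valued sheaves, is exact, so that $\R i_* \sF \cong i_*\sF$ is concentrated in degree $0$. Since $i$ is a closed embedding of topological spaces, the stalk of $i_*\sF$ at a point $x$ is $\sF_{x}$ when $x \in Y$ and is $0$ otherwise; this gives the vanishing of higher direct images at the level of underlying sheaves, and the remaining point is that $i_*$ moreover preserves \emph{strict} exactness, which holds because the transition $(i_*\sF)(U)=\sF(U\cap Y)$ is an identification of spaces of sections together with their locally convex topology (resp. bornology).

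With exactness of $i_*$ in hand I would invoke the composition of derived functors. The inverse image $i^{-1}$ is exact and left adjoint to $i_*$, so $i_*$ preserves injective objects and the Grothendieck/Leray spectral sequence for $\Gamma(X,-)\circ i_* = \Gamma(Y,-)$ applies in the quasi-abelian setting, yielding a canonical isomorphism
\[ \R\Gamma(Y, \sF) \cong \R\Gamma(X, \R i_* \sF) \cong \R\Gamma(X, i_* \sF) \]
in $D(\bLoc_k)$ (resp. $D(\bBorn_k)$). By hypothesis $X$ satisfies Theorem B in the functional-analytic sense and $i_*\sF$ is coherent, so $\R\Gamma(X, i_*\sF) \cong \Gamma(X, i_*\sF)$ is concentrated in degree $0$; combining this with the identification of the first step gives $\R\Gamma(Y, \sF) \cong \Gamma(Y, \sF)$, which is the asserted vanishing.

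I expect the main obstacle to be precisely the strictness in the middle step. While the exactness of $i_*$ on sheaves of abelian groups (vanishing of higher direct images for a closed embedding) is standard and already underlies Theorem 6.1.10 of \cite{Poi} used in Lemma \ref{lemma:thm_B_closed_immersion}, here one must additionally check that $i_*$ carries short strictly exact sequences of $\bLoc_k$- (resp. $\bBorn_k$-) valued sheaves to strictly exact sequences, and that $\Gamma(X, i_*\sF) \cong \Gamma(Y, \sF)$ is an isomorphism of locally convex (resp. bornological) spaces rather than only of the underlying vector spaces. This is exactly the extra information, beyond Lemma \ref{lemma:thm_B_closed_immersion}, that the strengthened statement is meant to record.
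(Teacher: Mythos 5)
Your proposal is correct and follows essentially the same route as the paper: the paper's proof likewise rests on the exactness of the direct image functor for a closed immersion on $\bLoc_k$- and $\bBorn_k$-valued sheaves (citing the arguments of section I.1 of \cite{GR}) together with the coherence of the push-forward (Theorem 3 \S 16 of \cite{Serre}), and then applies the Theorem B hypothesis on $X$. You simply spell out the composition-of-derived-functors step and the strictness caveat that the paper leaves implicit.
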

\begin{proof}
	The proofs in the first part of section I.1 of \cite{GR} applies also for sheaves with values in $\bLoc_k$ and $\bBorn_k$, giving that the direct image functor associated to a closed immersion is exact.  Again, the direct image of a coherent sheaf with respect to a closed immersion is a coherent sheaf by Theorem 3 \S 16 of \cite{Serre}.
\end{proof}

\subsection{A reinterpretation of the proof of Kiehl's Theorem}

In this section we briefly explain how to (re)deduce the Theorem B of Kiehl using the results presented in section \ref{sec:der_lim}. This proof also serves as blueprint (or maybe as a lemma) for the main proof of this paper, which will be built over the ideas explained in this section. So, for this section $k$ is constrained to be a non-Archimedean valued field. We notice that the argument we give is not very different from the original one of Kiehl, but the way we rewrite it makes easier to spot its essential parts and thus it makes more clear how to generalize the result. 

\begin{thm} \label{thm:kiehl}
	Let $X$ be a quasi-Stein space. Then, $X$ satisfies Theorem B when coherent sheaves of $X$ are thought as sheaves with values in $\bLoc_k$.
\end{thm}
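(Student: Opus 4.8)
The plan is to exploit the increasing affinoid covering $X = \bigcup_{i \in \N} U_i$ to reduce $\R\Gamma(X, \sF)$ to a derived projective limit of Banach spaces, and then to invoke the Mittag-Leffler Lemma for Fr\'echet spaces (Lemma \ref{lem:TopML}). First I would record the input from classical rigid geometry: since each $U_i$ is affinoid, Tate's acyclicity theorem (Kiehl's Theorem B for affinoids) gives $H^q(U_i, \sF) = 0$ for $q > 0$, so that $\R\Gamma(U_i, \sF) \cong \sF(U_i)$ is concentrated in degree $0$. Moreover $\sF(U_i)$ is a finitely generated module over the affinoid algebra $A_i \doteq \cO_X(U_i)$, hence carries a canonical Banach — in particular Fr\'echet — structure as a quotient of some $A_i^{\,r}$. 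The restriction maps assemble $\{\sF(U_i)\}_{i \in \N}$ into a projective system in $\bLoc_k$.

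The central identification is $\R\Gamma(X, \sF) \cong \R\limpro_i \R\Gamma(U_i, \sF) \cong \R\limpro_i \sF(U_i)$ in $D^{\ge 0}(\bLoc_k)$. On global sections this rests on $\Gamma(X, \sF) = \limpro_i \sF(U_i)$; on the derived level it reflects that, for the increasing union, the only contributions to higher cohomology are the higher derived limit terms $LH^p(\R\limpro_i(-))$ and the higher cohomologies $H^q(U_i, \sF)$, the latter vanishing by the affinoid acyclicity just recalled. Concretely this is extracted from the \v{C}ech complex of the covering $\{U_i\}$, which simplifies because $U_i \cap U_j = U_{\min(i,j)}$, matching the shape of the Roos complex for $I = \N$ described in Lemma \ref{lem:RoosML}.

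Next I would verify the hypotheses of Lemma \ref{lem:TopML}. The system maps $\sF(U_{i+1}) \to \sF(U_i)$ are dense: since the restriction $A_{i+1} \to A_i$ is a Weierstrass localization it has dense image, and $\sF(U_i) \cong \sF(U_{i+1}) \wotimes_{A_{i+1}} A_i$ is finitely generated, so the $A_{i+1}$-span of the images of a generating set is dense in $\sF(U_i)$. Having a projective system of Fr\'echet spaces with dense transition maps, Lemma \ref{lem:TopML} tells us the Roos complex is strictly exact; equivalently, by Corollary \ref{cor:Weibel}, $\R\limpro_i \sF(U_i) \cong \limpro_i \sF(U_i)$ is concentrated in degree $0$. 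Combining this with the identification above yields $\R\Gamma(X, \sF) \cong \Gamma(X, \sF)$, and by Remark \ref{rmk:LH_0} this forces $H^i(X, \sF) = 0$ for every $i > 0$.

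The main obstacle is precisely the identification $\R\Gamma(X, \sF) \cong \R\limpro_i \sF(U_i)$ inside the quasi-abelian derived category: one must check that the global section functor for sheaves valued in $\bLoc_k$ genuinely computes the derived limit of the sections over the $U_i$, rather than merely the underlying statement in $\bAb$. I expect the cleanest route is the \v{C}ech spectral sequence for the nested cover $\{U_i\}$ together with the affinoid acyclicity, using that the nerve of this cover collapses so that its \v{C}ech complex is literally the Roos complex of $\{\sF(U_i)\}$; everything else is then a formal consequence of the functional-analytic machinery of Section \ref{sec:der_lim}.
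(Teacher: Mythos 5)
Your proposal is correct and follows essentially the same route as the paper: both arguments reduce Theorem B to (i) affinoid acyclicity of $\sF$ on each $U_n$, (ii) the combinatorial collapse $U_{i_0}\cap\dots\cap U_{i_m}=U_{\min(i_0,\dots,i_m)}$ together with exactness of products, and (iii) the Mittag-Leffler Lemma \ref{lem:TopML} applied to the Fr\'echet system $\{\sF(U_n)\}$ whose transition maps are dense because the $A_{n+1}\to A_n$ are Weierstrass localizations. The only difference is organizational: the paper writes $\sF\cong\underset{n}\limpro\,(\iota_n)_*\sF_n$ and commutes $LH^\bullet$ with $\limpro$ via Lemma \ref{lemma:complexes}, whereas you identify the (normalized) \v{C}ech complex of the nested cover with the Roos complex of $\{\sF(U_i)\}$ directly---two bookkeepings of the same argument.
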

\begin{proof}
   Let $\sF$ be a coherent sheaf on $(X, \cO_X)$. Consider an affinoid exhaustion
	\[ X = \bigcup_{n \in \N} U_n. \]
	It is a standard result of \u{C}ech cohomology that the covering $\{U_n\}_{n \in \N}$ can be used to calculate the \u{C}ech cohomology of $\sF$. For each $n \in \N$ we can consider the inclusion $\iota_n: U_n \to X$ and the coherent sheaf $\sF_n = \iota_n^{*}(\sF)$ on $U_n$ and the push-forward sheaf $(\iota_n)_*(\sF_n)$ on $X$. The value of $(\iota_n)_*(\sF_n)$ can be easily calculated as 
	\[ (\iota_n)_*(\sF_n) (V) = \sF(U_n \cap V) \]
	for each admissible open $V \subset X$. Therefore, as a sheaf
	\[ \sF \cong \limpro_{n \in \N} (\iota_n)_*(\sF_n) \]
	and hence for each admissible open $V \subset X$
	\[ (\limpro_{n \in \N} (\iota_n)_*(\sF_n))(V) \cong \limpro_{n \in \N} ((\iota_n)_*(\sF_n)(V)) \]
	because limits of sheaves are calculated as pre-sheaves. Consider now the \u{C}ech complex for the covering $\{ U_n \}_{n \in \N}$ of $X$,
	\[ 0 \to \sF(X) \to \prod_{i \in \N} \sF(U_i) \to \prod_{i,j \in \N, i < j} \sF(U_i \cap U_j) \to \cdots \]
	this is equivalent to 
	\[ 0 \to (\limpro_{n \in \N} (\iota_n)_*(\sF_n))(X) \to \prod_{i \in \N} (\limpro_{n \in \N} (\iota_n)_*(\sF_n))(U_i) \to \prod_{i,j \in \N, i < j} (\limpro_{n \in \N} (\iota_n)_*(\sF_n))(U_i \cap U_j) \to \cdots \]
	which is equivalent to
	\[ 0 \to \limpro_{n \in \N} \sF_n(U_n) \to \prod_{i \in \N} \limpro_{n \in \N} \sF_n(U_i \cap U_n) \to \prod_{i,j \in \N, i < j} \limpro_{n \in \N} \sF_n(U_i \cap U_j \cap U_n) \to \cdots \]
	which is equivalent to
	\[ 0 \to \limpro_{n \in \N} \sF(U_n) \to \limpro_{n \in \N} \prod_{i \in \N} \sF(U_i \cap U_n) \to \limpro_{n \in \N} \prod_{i,j \in \N, i < j} \sF(U_i \cap U_j \cap U_n) \to \cdots . \]
	For each $n \in \N$ the complex
	\[ 0 \to \sF_n(U_n) \to \prod_{i \in \N} \sF_n(U_i \cap U_n) \to \prod_{i,j \in \N, i < j} \sF_n(U_i \cap U_j \cap U_n) \to \cdots \]
	is strictly exact, because $\sF_n$ is coherent on $U_n$ and $U_n$ satisfies Theorem B, as a consequence of Tate's Theorem. For $m \ge 0$ the projective limit
	\[ \limpro_{n \in \N} \prod_{i_0 < \dots < i_m \in \N} \sF_n(U_{i_0} \cap \dots \cap U_{i_m} \cap U_n) \cong \prod_{i_0 < \dots < i_m \in \N}  \limpro_{n \in \N} \sF_n(U_{i_0} \cap \dots \cap U_{i_m} \cap U_n) \]
	is exact, because each term $\sF_n(U_{i_0} \cap \dots \cap U_{i_m} \cap U_n)$ is eventually constant when $n \to \infty$ (because $U_{i_0} \cap \dots \cap U_{i_m} \cap U_n = U_j$ with $j = \min \{ i_0, \dots, i_m, n \}$) and because direct products are exact in $\bLoc_k$. Therefore, we can apply Lemma \ref{lemma:complexes} to deduce the theorem once we checked that also the projective limit 
\begin{equation} \label{eqn:F}
\limpro_{n \in \N} \sF_n(U_n)
\end{equation} 
has vanishing higher derived functors. This follows as an easy application of the classical Mittag-Leffler Lemma for Fr\'echet spaces \ref{lem:TopML}. The maps $\cO_X(U_{n + 1}) \to \cO_X(U_n)$ are with dense images, because they are Weierstrass localizations. Therefore, for an $r$ such that the upper horizontal maps of the diagram
	\[
\begin{tikzpicture}
\matrix(m)[matrix of math nodes,
row sep=2.6em, column sep=2.8em,
text height=1.5ex, text depth=0.25ex]
{  \cO_{U_n}(U_n)^r  & \sF(U_n) \\
   \cO_{U_n}(U_n)^r \wotimes_{\cO_{U_n}(U_n)} \cong \cO_{U_{n- 1}}(U_{n - 1})^r  & \sF(U_{n-1}) \cong \sF(U_n) \wotimes_{\cO_{U_n}(U_n)} \cO_{U_{n- 1}}(U_{n - 1})  \\};
\path[->,font=\scriptsize]
(m-1-1) edge node[auto] {$$} (m-1-2);
\path[->,font=\scriptsize]
(m-1-1) edge node[auto] {$$} (m-2-1);
\path[->,font=\scriptsize]
(m-1-2) edge node[auto] {$$}  (m-2-2);
\path[->,font=\scriptsize]
(m-2-1) edge node[auto] {$$}  (m-2-2);
\end{tikzpicture}.
\]
is surjective, also the bottom horizontal is surjective because tensor product preserve surjections. The left vertical one is dense, which implies that the right one is dense because tensor products preserve dense maps. Hence, we can apply Lemma \ref{lem:TopML} to deduce that the limit (\ref{eqn:F}) has vanishing higher functors.
\end{proof}

\begin{cor}
Let $X$ be a quasi-Stein space. Then, $X$ satisfies Theorem B in the classical sense.
\end{cor}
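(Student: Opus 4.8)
The plan is to deduce this corollary formally from Theorem \ref{thm:kiehl} together with the comparison Lemma \ref{lemma:quasi}; no new geometric or functional-analytic input is needed, since all the substance has already been extracted in Theorem \ref{thm:kiehl}. First I would recall that the classical statement of Theorem B for $X$ is precisely the assertion that $H^i(X, \sF) = 0$ for all $i > 0$ and all $\sF \in \bCoh(X)$, where these are the derived functors of the global section functor computed in the category of abelian groups (equivalently of $\cO_X$-modules), that is, $\R \Gamma(X, \sF) \cong \Gamma(X, \sF)$ in $D(\bAb)$. Theorem \ref{thm:kiehl} provides exactly this vanishing in the enriched form: regarding $\sF$ as a sheaf with values in $\bLoc_k$, the \u{C}ech complex computing the cohomology is strictly exact in $\bLoc_k$, so that $\R \Gamma(X, \sF) \cong \Gamma(X, \sF)$ in $D(\bLoc_k)$.

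The single step of the proof is then to invoke Lemma \ref{lemma:quasi}. Its hypothesis, that $\R \Gamma(X, \sF) = \Gamma(X, \sF)$ in $D(\bLoc_k)$, is literally the conclusion of Theorem \ref{thm:kiehl}, and the lemma outputs $\R \Gamma(X, \sF) = \Gamma(X, \sF)$ in $D(\bAb)$. The underlying mechanism is that the forgetful functor $\bLoc_k \to \bAb$ sends a strictly exact complex of locally convex spaces to an exact complex of abelian groups, so the finer vanishing in the quasi-abelian category degenerates correctly to ordinary cohomological vanishing. Since $\sF \in \bCoh(X)$ was arbitrary, $X$ satisfies Theorem B in the classical sense.

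As for the difficulty: there is essentially no obstacle remaining, the corollary being a formal passage along the forgetful functor, and the one potentially subtle point (that strict exactness degenerates to exactness of the underlying complexes of abelian groups) is exactly what Lemma \ref{lemma:quasi} records. The only thing to keep in mind is that the \u{C}ech presentation of $\R \Gamma$ used in Theorem \ref{thm:kiehl} does compute the ordinary sheaf cohomology, which is standard for the cofinal affinoid exhaustion $\{U_n\}_{n \in \N}$ and was already used in the proof of that theorem; hence no further verification is required.

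\begin{proof}
By Theorem \ref{thm:kiehl}, for every $\sF \in \bCoh(X)$ one has $\R \Gamma(X, \sF) = \Gamma(X, \sF)$ in $D(\bLoc_k)$. Applying Lemma \ref{lemma:quasi} we obtain $\R \Gamma(X, \sF) = \Gamma(X, \sF)$ in $D(\bAb)$, that is, $H^i(X, \sF) = 0$ for all $i > 0$. As this holds for every coherent sheaf $\sF$, the space $X$ satisfies Theorem B in the classical sense.
\end{proof}
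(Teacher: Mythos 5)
Your proof is correct and follows exactly the paper's own argument: the corollary is obtained by combining Theorem \ref{thm:kiehl} with Lemma \ref{lemma:quasi}, so the strict exactness in $D(\bLoc_k)$ degenerates to ordinary exactness in $D(\bAb)$. No gaps.
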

\begin{proof}
It is enough to apply Lemma \ref{lemma:quasi} in combination with Theorem \ref{thm:kiehl}.
\end{proof}

\begin{rmk}
We stated our version of Kiehl's result in $\bLoc_k$, and not in $\bBorn_k$ because we do not even know if it is true in $\bBorn_k$! Notice that to apply the Mittag-Leffler Lemma for bornological Fr\'echet spaces one needs to know that the limit space, in our case $\underset{n \in \N}\limpro \sF_n(U_n)$, is nuclear. But this is false for quasi-Stein spaces in general. It is true only for the Stein ones. Notice that thanks to \ref{cor:proj_born_top} we know that also in the bornological case the \u{C}ech complex of $\sF$ is exact (considering only the underlying abelian groups) but we do not know if it is strictly exact when $\sF_n(U_n)$ are equipped with the von Neumann bornologies and then the projective limit bornology is calculated. Nevertheless the next theorem holds.
\end{rmk}

\begin{thm} \label{thm:kiehl_born}
Let $X$ be a quasi-Stein space. Then, $X$ satisfies Theorem B when coherent sheaves of $X$ are thought as sheaves with values in $\bBorn_k$ if the section spaces $\sF(U)$ are equipped with the compactoid bornology induced by their locally convex topologies.
\end{thm}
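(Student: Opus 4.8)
The plan is to follow the blueprint of the proof of Theorem \ref{thm:kiehl} essentially verbatim, transporting every step from $\bLoc_k$ to $\bBorn_k$, and then to isolate the single place where the topological argument breaks down — the application of the Mittag-Leffler lemma — as exactly the step that the compactoid bornology is designed to repair. So I would fix $\sF \in \bCoh(X)$ together with an affinoid exhaustion $X = \bigcup_{n} U_n$, write $\sF \cong \limpro_n (\iota_n)_*(\sF_n)$ with $\sF_n = \iota_n^*\sF$, use that $\{U_n\}$ computes \u{C}ech cohomology, and rewrite the \u{C}ech complex of $\sF$ as the projective limit over $n$ of the \u{C}ech complexes of the $\sF_n$. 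This is a purely sheaf-theoretic manipulation, insensitive to the choice of bornology; the only inputs are that limits of sheaves are computed termwise and that $U_{i_0}\cap\dots\cap U_{i_m}\cap U_n = U_{\min\{i_0,\dots,i_m,n\}}$.

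Next I would establish the termwise inputs needed for Lemma \ref{lemma:complexes}. For each fixed $n$ the \u{C}ech complex of the coherent sheaf $\sF_n$ on the affinoid $U_n$ is strictly exact in $\bBorn_k$ by Theorem B for (dagger) affinoids. For each multidegree $m\ge 1$ the term $\sF_n(U_{i_0}\cap\dots\cap U_{i_m}\cap U_n)$ is eventually constant in $n$, so the associated projective system is Roos acyclic, and since products are exact in $\bBorn_k$ the limit commutes with the product. Thus the entire problem collapses, via Lemma \ref{lemma:complexes} and Remark \ref{rmk:LH_0}, to showing that the degree-zero term $\limpro_n \sF_n(U_n)$ is Roos acyclic.

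This last reduction is the crux, and here I would invoke the bornological Mittag-Leffler lemma, Lemma \ref{lem:BornML}. Its density hypothesis on the transition maps $\sF(U_{n+1})\to\sF(U_n)$ is verified exactly as in Theorem \ref{thm:kiehl}: they are Weierstrass localizations, hence dense, and density propagates through the presentation of $\sF(U_n)$ as a quotient of a free module via the completed tensor product. The remaining hypothesis of Lemma \ref{lem:BornML} is nuclearity of the limit, which is precisely what fails for the von Neumann bornology on a general (non-Stein) quasi-Stein space and what the compactoid bornology is meant to restore: equipping each Banach space $\sF(U_n)$ with its compactoid bornology turns it, via the correspondence between compactoid subsets and nuclear direct-limit presentations, into a nuclear bornological space in the sense of Definition \ref{defn:nuclear_born}, and nuclearity then passes to the projective limit and to the product by Proposition 3.51 of \cite{BaBeKr}. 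Granting this, Lemma \ref{lem:BornML} applies, $\limpro_n\sF_n(U_n)$ is Roos acyclic, and one concludes $\R\Gamma(X,\sF)=\Gamma(X,\sF)$ in $D(\bBorn_k)$, hence $H^i(X,\sF)=0$ for $i>0$.

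I expect the main obstacle to be exactly this nuclearity claim and its compatibility with the rest of the argument. One must check that replacing the von Neumann bornology by the compactoid bornology genuinely yields a nuclear bornological space — delicate, since compactoid does not coincide with nuclear in the same naive way over an Archimedean base — and that this replacement does not destroy the strict exactness of the fixed-$n$ affinoid \u{C}ech complexes used in the second step, so that Lemma \ref{lemma:complexes} can still be applied. In particular the compactoid models, rather than the von Neumann ones, must be shown to satisfy the hypotheses of Lemma \ref{lem:BornML} (bornological Fr\'echet spaces with nuclear limit). Once nuclearity of the compactoid limit is secured and its compatibility with the termwise strict exactness is verified, the conclusion follows formally as in the locally convex case.
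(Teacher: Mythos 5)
Your route is genuinely different from the paper's, and as written it has a gap at its crux. The paper does not redo the Kiehl argument inside $\bBorn_k$ at all: it observes that the functor $\bCpt\colon \bLoc_k \to \bBorn_k$ (underlying space with the compactoid bornology) is \emph{exact} when restricted to Fr\'echet spaces (Lemma 3.66 of \cite{BaBeKr}), so one simply applies $\bCpt$ to the strictly exact complexes already produced in the proof of Theorem \ref{thm:kiehl} and transports strict exactness from $\bLoc_k$ to $\bBorn_k$ in one stroke. No bornological Mittag--Leffler lemma and no nuclearity enter the paper's proof.

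The gap in your version is the application of Lemma \ref{lem:BornML}. That lemma is stated for a projective system of \emph{bornological Fr\'echet spaces}, i.e.\ spaces of the form $E^b$ carrying the von Neumann bornology of a Fr\'echet space, with bornologically dense transition maps and nuclear limit. The objects you feed it, $\bCpt(\sF(U_n))$, are not of this form: the compactoid bornology on an infinite-dimensional Banach space is strictly finer than the von Neumann bornology, so the hypotheses of Lemma \ref{lem:BornML} are simply not met and the lemma cannot be invoked as stated. You would need a separate compactoid variant of the bornological Mittag--Leffler lemma, and for it you would have to prove, not merely assert, (i) that $\limpro_{n}\bCpt(\sF(U_n))$ is nuclear in the sense of Definition \ref{defn:nuclear_born}, (ii) that the Weierstrass transition maps remain \emph{bornologically} dense for the compactoid bornology (topological density does not automatically give bornological density for a finer bornology), and (iii) that the fixed-$n$ affinoid \v{C}ech complexes are strictly exact for the compactoid bornology. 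You flag (i) and (iii) as ``obstacles to be checked,'' but they are exactly the content of the theorem in your scheme, so the argument is conditional. Note that (iii), once granted via exactness of $\bCpt$ on Fr\'echet spaces, already yields the whole theorem by the paper's one-line argument, which is a strong hint that the detour through Lemma \ref{lem:BornML} is both unnecessary and the wrong tool here.
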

\begin{proof}
We do not enter in a detailed explanation of what are compactoid subsets of a locally convex space. We refer the reader to Section 3.3 of \cite{BaBeKr} for a review of this notion. We notice that to deduce the theorem it is enough to check that the functor $\bCpt: \bLoc_k \to \bBorn_k$, which associates to a locally convex space the underlying $k$-vector space equipped with the compactoid bornology, is exact when restricted to the category of Fr\'echet spaces. And this is proved in  Lemma 3.66 of \cite{BaBeKr}.
\end{proof}

\begin{open} 
Is the previous theorem true also when coherent sheaves are equipped with the von Neumann bornology?
\end{open}

\subsection{The Theorem B for dagger quasi-Stein spaces}

In this section we remove the restriction on $k$ to be non-Archimedean. 

\begin{lemma} \label{lemma:B_dagger}
	Let $W$ be a nuclear LB-space and $\{V_n\}_{n \in \N}$ be a projective system of nuclear LB spaces such that $\underset{n \in \N}\limpro V_n$ is a nuclear Fr\'echet space and $\underset{n \in \N}{``\limpro"} V_n \cong \underset{n \in \N}{``\limpro"} V_n'$, where $\underset{n \in \N}{``\limpro"} V_n'$ is an epimorphic system of Fr\'echet spaces. Then
	\[ \R \limpro_{n \in \N} W \wotimes_k V_n \cong \limpro_{n \in \N} W \wotimes_k V_n \]
	holds in $D(\bLoc_k)$ and $D(\bBorn_k)$.
\end{lemma}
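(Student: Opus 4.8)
The plan is to reduce the statement, in both $\bLoc_k$ and $\bBorn_k$, to the surjectivity of the single differential $\Delta_{W\wotimes_k V_\bullet}$ of the Weibel complex (\ref{eqn:Weibel}) associated to the system $\{W\wotimes_k V_n\}_{n\in\N}$, and then to invoke the fact that for projective systems of LB spaces surjectivity of $\Delta$ already forces it to be a strict epimorphism. Two structural observations set this up. First, since $W$ is nuclear, each $W\wotimes_k V_n$ is again a nuclear LB space: writing $W=\limind_m W_m$ and $V_n=\limind_l V_{n,l}$ as countable direct limits of Banach spaces with nuclear transition maps, one has $W\wotimes_k V_n=\limind_{m,l}(W_m\wotimes_k V_{n,l})$, again a countable direct limit of Banach spaces with nuclear transition maps. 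Second, $W\wotimes_k(-)$ is an additive functor on $\bBorn_k$ (and, via ${}^t$, on $\bLoc_k$), so it extends to pro-objects and sends the isomorphism $\underset{n\in\N}{``\limpro"}V_n\cong\underset{n\in\N}{``\limpro"}V_n'$ to an isomorphism $\underset{n\in\N}{``\limpro"}(W\wotimes_k V_n)\cong\underset{n\in\N}{``\limpro"}(W\wotimes_k V_n')$ of pro-objects.

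Next I would establish surjectivity of $\Delta_{W\wotimes_k V_\bullet}$. By Lemma \ref{lemma:born_proj} surjectivity of $\Delta$ is a statement about the underlying projective system of $k$-vector spaces, and, being equivalent to the vanishing of $\limpro^1$ in $k$-Vect, it is an invariant of the underlying pro-object (Lemma \ref{lemma:pro_object} applied in $k$-Vect). By the pro-isomorphism of the previous paragraph it therefore suffices to check surjectivity for the Fr\'echet presentation $\{W\wotimes_k V_n'\}$. Now the system $\{V_n'\}$ is epimorphic, i.e. its transition maps have dense image, so the Mittag--Leffler Lemma \ref{lem:TopML} shows that the Weibel complex of $\{V_n'\}$ is strictly exact; in particular the map $\Delta_{V'_\bullet}\colon\prod_n V_n'\to\prod_n V_n'$ is a strict epimorphism. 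Because $W$ is nuclear, $W\wotimes_k(-)$ is exact, so $W\wotimes_k\Delta_{V'_\bullet}$ is again a strict epimorphism, in particular surjective; identifying $W\wotimes_k\prod_n V_n'\cong\prod_n(W\wotimes_k V_n')$ compatibly with the two $\Delta$-maps (this is the key nuclearity input, see below) identifies $W\wotimes_k\Delta_{V'_\bullet}$ with $\Delta_{W\wotimes_k V'_\bullet}$. Hence $\Delta_{W\wotimes_k V'_\bullet}$, and therefore $\Delta_{W\wotimes_k V_\bullet}$, is surjective.

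It then remains to upgrade surjectivity to strict epimorphy and to read off the conclusion. Each $W\wotimes_k V_n$, being an LB space, carries a strict ordered web by Lemma \ref{lemma:web_LB}; consequently, in $\bLoc_k$ the surjective map $\Delta_{W\wotimes_k V_\bullet}$ is a quotient map by Theorem \ref{thm:web_quotient}, while in $\bBorn_k$ the same conclusion is furnished directly by Theorem \ref{thm:born_LB_quotient}. In either category $\Delta_{W\wotimes_k V_\bullet}$ is a strict epimorphism, so Corollary \ref{cor:Weibel} yields $\R\limpro_{n\in\N}(W\wotimes_k V_n)\cong\limpro_{n\in\N}(W\wotimes_k V_n)$ in $D(\bLoc_k)$ and in $D(\bBorn_k)$, as required.

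The main obstacle is the identification $W\wotimes_k\prod_n V_n'\cong\prod_n(W\wotimes_k V_n')$ used in the second paragraph, that is, the commutation of the completed tensor product by the nuclear space $W$ with the countable product (and hence with the projective limit in (\ref{eqn:Weibel})). Exactness of $W\wotimes_k(-)$ alone does not give this, since tensor products do not commute with infinite products in general; the commutation genuinely uses nuclearity and should be obtained by the same product-nuclearity bookkeeping employed in the proof of Lemma \ref{lem:BornML} (via Propositions 3.51 and 3.79 of \cite{BaBeKr}), rewriting $\prod_n V_n'$ as $\limpro_N\prod_{n\le N}V_n'$ and pushing $W\wotimes_k(-)$ through the finite products and the limit.
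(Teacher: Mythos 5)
Your argument is correct and follows essentially the same route as the paper's proof: Mittag--Leffler applied to the Fr\'echet presentation $\{V_n'\}$, exactness of $W\wotimes_k(-)$ by nuclearity of $W$, commutation of the completed tensor product with countable products, identification of the resulting map with $\Delta_{W\wotimes_k V_\bullet}$, and the upgrade from surjectivity to strict epimorphy via Theorems \ref{thm:web_quotient} and \ref{thm:born_LB_quotient} together with Lemma \ref{lemma:born_proj} and Corollary \ref{cor:Weibel}. The only cosmetic difference is that you transfer along the pro-isomorphism after tensoring (at the level of surjectivity of $\Delta$) rather than before, and the commutation $W\wotimes_k\prod_n V_n\cong\prod_n(W\wotimes_k V_n)$ that you flag as the main obstacle is exactly the point the paper settles by citing Theorem 15.4.1 of \cite{J}.
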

\begin{proof}
We recall that an LB space is nuclear as locally convex space if and only if it is nuclear as bornological space (cf. Lemma 3.59 of \cite{BaBeKr}).

The Roos complex
\[ 0 \to \limpro_{n \in \N} V_n \to \prod_{n \in \N} V_n \stackrel{\Delta_{V_\bullet}}{\to} \prod_{n \in \N} V_n \to 0 \]
is strictly exact both in $\bLoc_k$ and $\bBorn_k$ because we can apply the Mittag-Leffler Lemmas to the system $\{V_n'\}_{n \in \N}$ and then apply Lemma \ref{lemma:pro_object}. Since $W$ is nuclear, the functor $W \wotimes_k (-)$ is exact (for the exactness in $\bLoc_k$ the result is classical, and see Theorem 3.50 of \cite{BaBeKr} for the bornological version of the result) and therefore the sequence
\[ 0 \to W \wotimes_k \limpro_{n \in \N} V_n \to W \wotimes_k \prod_{n \in \N} V_n \stackrel{id_{W} \wotimes \Delta_{V_\bullet}}{\to} W \wotimes_k \prod_{n \in \N} V_n \to 0 \]
is strictly exact. In $\bLoc_k$ the completed (projective) tensor product commutes with products and cofiltered limits, therefore we get a strictly exact sequence
\[ 0 \to \limpro_{n \in \N} W \wotimes_k V_n \to \prod_{n \in \N} W \wotimes_k V_n \stackrel{\Psi}{\to} \prod_{n \in \N} W \wotimes_k V_n \to 0. \]
To end the proof we need to check that the map $\Psi$ coincides with the map $\Delta_{W \wotimes_k V_\bullet}$ of the Roos complex of the system $\{W \wotimes_k V_n\}_{n \in \N}$. Notice that the isomorphism 
\[ W \wotimes_k \prod_{n \in \N} V_n \to \prod_{n \in \N} W \wotimes_k V_n \]
is given by the extension by continuity (and by linearity) of the map 
\[ a \otimes (b_i)_{i \in \N} \mapsto (a \otimes b_i)_{i \in \N}, \]
see \cite{J}, Theorem 15.4.1 for a proof of this fact. We denote this isomorphism by $\varphi$. Therefore,
\[ \Psi((a \otimes b_i)_{i \in \N}) = \varphi((id_{W} \otimes \Delta_{V_\bullet} )( a \otimes (b_i)_{i \in \N})) = \varphi(a \otimes (b_i - \pi_{i, i+1}(b_{i + 1}))_{i \in \N}) = \]
\[ = (a \otimes b_i - a \otimes \pi_{i, i+1}(b_{i + 1}))_{i \in \N} = \Delta_{W \wotimes_k V_\bullet}((a \otimes b_i)_{i \in \N}). \]
Therefore, the map $\Delta_{W \wotimes_k V_\bullet}$ is surjective. Thus, Lemma \ref{lemma:born_proj} implies that $\Delta_{W \wotimes_k V_\bullet}$ is surjective also as a map of bornological vector spaces. Finally, applying Theorem \ref{thm:web_quotient} and Lemma \ref{thm:born_LB_quotient} we obtain that $\Delta_{W \wotimes_k V_\bullet}$ is a quotient map of both locally convex and bornological vector spaces.
\end{proof}

\begin{cor}
The projective system 
\[ \limpro_{\rho < 1} W_k^1 \wotimes_k W_k^1(\rho) \]
which defines the quasi-Stein algebra associated to the direct product of a closed and open unitary disk, has vanishing higher derived functors.
\end{cor}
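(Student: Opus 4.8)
The plan is to realize the given system as a direct special case of Lemma \ref{lemma:B_dagger}. First I would pass from the $\rho < 1$ indexed system to a countable one: choosing any strictly increasing sequence $\rho_n < 1$ with $\rho_n \to 1$, the subsystem $\{W_k^1 \wotimes_k W_k^1(\rho_n)\}_{n \in \N}$ is cofinal in $\{W_k^1 \wotimes_k W_k^1(\rho)\}_{\rho < 1}$, so both define the same object of $\bPro(\bC)$ and, by Lemma \ref{lemma:pro_object}, the same right derived limit. It therefore suffices to treat the countable system with $W = W_k^1$ and $V_n = W_k^1(\rho_n)$ and to verify the hypotheses of Lemma \ref{lemma:B_dagger}.

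Next I would check nuclearity. Both $W_k^1$ and each $W_k^1(\rho_n)$ are of the form $\limind_{\rho' > \rho} T_k^1(\rho')$, and the system maps are the restriction maps $T_k^1(\rho'') \to T_k^1(\rho')$ for $\rho' < \rho''$, which are nuclear; hence these spaces are nuclear LB spaces in the sense of Definition \ref{defn:nuclear_born}. The projective limit $\limpro_n W_k^1(\rho_n)$ is the algebra of functions holomorphic on the open unit disk, a countable projective limit of Banach disk algebras with nuclear transition maps, and so is a nuclear Fr\'echet space.

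The remaining, and most delicate, hypothesis is the epimorphic-system condition: I must produce an isomorphism $\limpro_n V_n \cong \limpro_n V_n'$ of pro-objects with $\{V_n'\}$ an epimorphic system of Fr\'echet spaces. Here I would take $V_n' = T_k^1(\rho_n)$, which are Banach (hence Fr\'echet) spaces whose restriction maps $T_k^1(\rho_{n+1}) \to T_k^1(\rho_n)$ have dense image, since polynomials are dense, making the system epimorphic. The isomorphism of pro-objects comes from an interleaving argument: because $\rho_n < \rho_{n+1}$, the restriction maps give inclusions $T_k^1(\rho_{n+1}) \hookrightarrow W_k^1(\rho_n) \hookrightarrow T_k^1(\rho_n)$, and all the resulting composites agree with the transition maps of the two systems, so $\{W_k^1(\rho_n)\}$ and $\{T_k^1(\rho_n)\}$ are mutually cofinal subsystems of one larger projective system and thus isomorphic in $\bPro(\bC)$.

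With all hypotheses in place, Lemma \ref{lemma:B_dagger} yields
\[ \R \limpro_n W_k^1 \wotimes_k W_k^1(\rho_n) \cong \limpro_n W_k^1 \wotimes_k W_k^1(\rho_n) \]
in both $D(\bLoc_k)$ and $D(\bBorn_k)$, which is exactly the claimed vanishing of the higher derived functors. I expect the pro-object identification in the epimorphic-system step to be the main obstacle, while the nuclearity checks are routine consequences of the standard fact that restriction maps between disk algebras of strictly decreasing radius are nuclear.
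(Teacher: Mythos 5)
Your proposal is correct and follows essentially the same route as the paper, which simply states that Lemma \ref{lemma:B_dagger} applies directly once a strictly increasing sequence $\rho_n \to 1$ is chosen to replace the index set by $\N$. Your additional verifications (nuclearity of the overconvergent algebras, and the interleaving of $\{W_k^1(\rho_n)\}$ with the epimorphic Banach system $\{T_k^1(\rho_n)\}$) are exactly the details the paper leaves implicit, and they are carried out correctly.
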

\begin{proof}
Lemma \ref{cor:B_dagger} applies directly to the system once one choose a strictly increasing sequence of non negative real numbers $\{ \rho_n \}_{n \in \N}$ whose limit is $1$, to replace the given system with an equivalent one with $\N$ as index set.
\end{proof}

We need a last lemma.

\begin{lemma} \label{lemma:top_rings}
Let $R$ be a complete topological ring and $R' \subset R$ a subring which is dense in $R$. Let $E$ be a complete topological $R$-module and $E' \subset E$ an $R'$-module. Then, $\ol{E'}$ is an $R$-submodule of $E$.
\end{lemma}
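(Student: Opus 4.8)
The plan is to treat additive closure and $R$-scalar closure separately: the first is automatic, and the real content is the second, which I would obtain by a density-plus-continuity argument. Since $E'$ is in particular an additive subgroup of $E$ and the addition map $E \times E \to E$ is continuous, the closure $\overline{E'}$ is again an additive subgroup. Indeed, applying the elementary fact that a continuous map $f$ satisfies $f(\overline{S}) \subseteq \overline{f(S)}$ to the addition map, and using the product-closure identity $\overline{E' \times E'} = \overline{E'} \times \overline{E'}$, one gets $\overline{E'} + \overline{E'} \subseteq \overline{E'}$, and likewise stability under negation. Thus $\overline{E'}$ is a closed subgroup, and it remains only to check stability under scalars from all of $R$.

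For the module structure I would use the joint continuity of scalar multiplication. Let $\mu \colon R \times E \to E$, $\mu(r,x) = rx$, which is continuous because $E$ is a topological $R$-module. Since $E'$ is an $R'$-module we have $\mu(R' \times E') \subseteq E' \subseteq \overline{E'}$. Now $R'$ is dense in $R$, so $\overline{R'} = R$, and in the product topology the closure of a product is the product of the closures; hence $\overline{R' \times E'} = \overline{R'} \times \overline{E'} = R \times \overline{E'}$. Applying $f(\overline{S}) \subseteq \overline{f(S)}$ to $\mu$ yields
\[ \mu(R \times \overline{E'}) = \mu(\overline{R' \times E'}) \subseteq \overline{\mu(R' \times E')} \subseteq \overline{E'}. \]
This says exactly that $r x \in \overline{E'}$ for every $r \in R$ and every $x \in \overline{E'}$, so $\overline{E'}$ is stable under the $R$-action. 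Together with the previous paragraph, $\overline{E'}$ is an $R$-submodule of $E$, as claimed.

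I do not expect a serious obstacle here: the only point requiring care is that one must approximate the scalar and the vector \emph{simultaneously}, which is why the joint continuity of $\mu$ (rather than mere separate continuity) together with the identity $\overline{A \times B} = \overline{A} \times \overline{B}$ is the clean engine of the proof. If one preferred to rely only on separate continuity, the same conclusion follows from a two-step argument: first, for fixed $x \in E'$ the map $r \mapsto rx$ sends the dense set $R'$ into $E'$, giving $R x \subseteq \overline{E'}$; then, for fixed $r \in R$ the map $x \mapsto rx$ sends $E'$ into $\overline{E'}$, whence $r\,\overline{E'} \subseteq \overline{E'}$. Note that completeness of $R$ and $E$ is not actually used in the argument; only the density of $R'$ and the continuity of the structure maps are needed.
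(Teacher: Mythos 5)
Your proof is correct, and the underlying idea --- approximate the scalar by elements of $R'$ and the vector by elements of $E'$, then use continuity of the module operations --- is the same as the paper's. The packaging differs: the paper argues with nets, writing $a=\lim_j a_j$ and $x=\lim_i x_i$ and manipulating iterated limits $\lim_i(\lim_j a_j x_i)=\lim_{i,j}a_j x_i$, and at the last step appeals to completeness of $\ol{E'}$ to conclude; your version replaces this with the identity $\ol{A\times B}=\ol{A}\times\ol{B}$ and $\mu(\ol{S})\subseteq\ol{\mu(S)}$ for the jointly continuous multiplication $\mu$, which sidesteps the limit-interchange entirely. Your closing observation is also accurate and worth noting: completeness of $R$ and $E$ plays no real role (the double net $a_j x_i$ lies in $E'$ and converges to $ax$, so $ax\in\ol{E'}$ by the definition of closure alone), so your argument is, if anything, a slight tightening of the paper's.
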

\begin{proof}
The set $\ol{E'}$ is by definition the set of limit points of convergent nets of elements of $E'$. Moreover, each element $a \in R$ can be written as a limit of a convergent net of elements of $R'$. Therefore,
given $x = \underset{i \in I}\lim \ x_i \in \ol{E'}$, with $x_i \in E'$, and $a = \underset{j \in J}\lim \ a_j$, with $a_j \in R'$, then
\[ a x = a \lim_{i \in I} x_i = \lim_{i \in I} a x_i \]
because multiplication by $a$ is a continuous operation and
\[  \lim_{i \in I} a x_i = \lim_{i \in I} (\lim_{j \in J} a_j) x_i = \lim_{i \in I} (\lim_{j \in J} a_j x_i) = \lim_{i,j \in I \times J} a_j x_i \in \ol{E'} \]
because $\ol{E'}$ is complete, being a closed submodule of a complete $R$-module.
\end{proof}

\begin{thm} \label{thm:B_dagger}
	Let $X$ be the direct product of a finite number of open polydisks, affine lines and dagger closed polydisks. Then, $X$ satisfies Theorem B (in both cases when coherent sheaves are considered as sheaves with values in $\bLoc_k$ or $\bBorn_k$).
\end{thm}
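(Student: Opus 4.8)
The plan is to follow the scheme of the proof of Theorem~\ref{thm:kiehl}, replacing the appeal to the Mittag-Leffler Lemma for Fr\'echet spaces by the functional-analytic results of Section~\ref{sec:der_lim}, and above all by Lemma~\ref{lemma:B_dagger}. First I would fix a coherent sheaf $\sF$ on $X$ together with the affinoid exhaustion $X=\bigcup_{n\in\N}U_n$, where each $U_n$ is a product of closed dagger polydisks whose polyradii exhaust the open polydisk factors (radii tending to $1$), the affine line factors (radii tending to $\infty$) and equal the fixed radii of the dagger closed polydisk factors. Exactly as in Theorem~\ref{thm:kiehl}, the \u{C}ech complex of $\sF$ for this covering is identified with $\limpro_{n}$ of the \u{C}ech complexes of the restrictions $\sF_n=\iota_n^{*}\sF$, each of which is strictly exact because $U_n$ is a dagger affinoid, on which coherent sheaves are acyclic. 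Since every intersection $U_{i_0}\cap\dots\cap U_{i_m}$ equals $U_{i_0}$, the \u{C}ech terms are products of systems that are eventually constant in $n$ and hence Roos acyclic; by Lemma~\ref{lemma:complexes} the whole statement reduces to showing that the augmentation system $\{\sF(U_n)\}_{n\in\N}$ is Roos acyclic in both $\bLoc_k$ and $\bBorn_k$.

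The second step treats the free case. Writing $\cO(U_n)=W\wotimes_k V_n$, where $W$ is the (constant) contribution of the dagger closed polydisk factors and $V_n$ that of the open polydisk and affine line factors, one checks that $W$ is a nuclear LB space, that each $V_n$ is a nuclear LB space, and that $\limpro_n V_n$---the algebra of functions on the product of the open polydisks and affine lines---is a nuclear Fr\'echet space. To meet the remaining hypothesis of Lemma~\ref{lemma:B_dagger} I would identify ${``\underset{n\in\N}\limpro"} V_n$ with the epimorphic system of convergent (Fr\'echet) algebras having the same limit: Lemma~\ref{lemma:top_rings} shows that the overconvergent algebras sit densely in their convergent completions, so the two pro-objects coincide and Lemma~\ref{lemma:pro_object} applies. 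Lemma~\ref{lemma:B_dagger} then yields that each free system $\{\cO(U_n)^r\}_n=\{W\wotimes_k V_n^{\oplus r}\}_n$ is Roos acyclic, in both categories.

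The third step passes from free modules to an arbitrary coherent $\sF$. Each $U_n$ is a regular dagger affinoid of finite global dimension, so $\sF(U_n)$ admits a finite free resolution; since the Weierstrass transition maps $\cO(U_{n+1})\to\cO(U_n)$ are flat and $\sF(U_n)\cong\sF(U_{n+1})\wotimes_{\cO(U_{n+1})}\cO(U_n)$, these resolutions can be arranged into a strictly exact complex of projective systems whose terms are free systems as in the previous step, and a d\'evissage propagates Roos acyclicity to $\{\sF(U_n)\}_n$. Concretely, surjectivity of $\Delta_{\sF(U_\bullet)}$ comes from the epimorphic Fr\'echet model together with Lemma~\ref{lemma:pro_object}, and in $\bBorn_k$ from Lemma~\ref{lemma:born_proj} and Corollary~\ref{cor:proj_born_top}; that $\Delta_{\sF(U_\bullet)}$ is moreover a quotient map follows from Theorem~\ref{thm:web_quotient} in $\bLoc_k$ (LB spaces carry strict ordered webs by Lemma~\ref{lemma:web_LB}) and from Theorem~\ref{thm:born_LB_quotient} in $\bBorn_k$. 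Lemma~\ref{lemma:quasi} then descends the vanishing to $D(\bAb)$, recovering the classical Theorem B.

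The hard part will be the third step, controlling a general coherent sheaf rather than a free one, and in particular the bornological case. As Remark~\ref{rmk:frechet} records, strict exactness can fail in $\bBorn_k$, so for the Mittag-Leffler input of Lemma~\ref{lem:BornML} one must know that $\limpro_n\sF(U_n)$ is nuclear; this holds here precisely because the open polydisk and affine line directions contribute nuclear Fr\'echet function spaces and $W$ is nuclear LB, which is exactly where the restriction to these model spaces is genuinely used. Lemma~\ref{lemma:top_rings} is the device that lets one replace the delicate dagger (LB) system by a pro-isomorphic convergent (Fr\'echet) one on which the Mittag-Leffler Lemmas apply cleanly, after which Lemma~\ref{lemma:pro_object} transports the vanishing back to the dagger side.
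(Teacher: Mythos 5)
Your first two steps (the \v{C}ech reduction to the Roos acyclicity of the augmentation system $\{\sF(U_n)\}_{n\in\N}$, and the free case via Lemma \ref{lemma:B_dagger}) follow the paper's proof essentially verbatim. The gap is in your third step. To ``arrange the resolutions into a strictly exact complex of projective systems whose terms are free systems'' you need a single map of sheaves $\cO_X^r\to\sF$ that is surjective on \emph{every} $U_n$ simultaneously, i.e.\ finitely many global sections generating every $\sF(U_n)$; flat base change only propagates a resolution chosen at level $N$ downwards to $n\le N$, and gives no way to make these choices coherently for all $n$ at once. That compatibility is essentially Theorem A for $X$, which is a consequence of what you are trying to prove, not an available input. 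Likewise your claim that surjectivity of $\Delta_{\sF(U_\bullet)}$ ``comes from the epimorphic Fr\'echet model'' is unsubstantiated for a general coherent $\sF$ (and for Archimedean $k$ there is no convergent/rigid model to fall back on, while Lemma \ref{lem:BornML} is inapplicable since the spaces $\sF(U_n)$ are LB and not Fr\'echet).

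The paper circumvents exactly this point, and this is the one idea your proposal is missing. For each \emph{fixed} $n$ separately it produces a surjection $\a_n:\cO^r(U_n)\to\sF(U_n)$ whose standard generators land in $\pi_n(\sF(X))$; this is possible because $\pi_n(\sF(X))$ may be taken dense in $\sF(U_n)$ (Lemma \ref{lemma:top_rings}), $\cO(U_n)$ is Noetherian, and finitely generated submodules of $\sF(U_n)$ are closed. One then does not resolve $\sF$ at all: the quantitative criterion of Theorem \ref{thm:LB_proj_0_strong}, applied to the free system $\{\cO^r(U_n)\}$ already known to be Roos acyclic, yields bounded sets $B_n$ with $\pi_{n,n+1}(\cO^r(U_{n+1}))\subset\pi_n(\cO^r(X))+B_n$, and pushing this inclusion through the bounded surjection $\a_n$, using $\a_n(\pi_n(\cO^r(X)))\subset\pi_n(\sF(X))$, verifies the same criterion for $\{\sF(U_n)\}$. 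Surjectivity of $\Delta_{\sF(U_\bullet)}$ then follows from Theorem \ref{thm:LB_proj_0_strong} directly, and the quotient-map property from Theorem \ref{thm:web_quotient} and Theorem \ref{thm:born_LB_quotient}, as you correctly anticipate for the final part. Without the density argument producing the $\a_n$ and the transfer of the condition of Theorem \ref{thm:LB_proj_0_strong} along them, your d\'evissage does not get off the ground.
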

\begin{proof}
Let $\sF$ be a coherent sheaf on $X$. We can use the same reasoning used to prove Theorem \ref{thm:kiehl} to reduce the question to check that
\[ \sF(X) \cong \limpro_{n \in \N} \sF(U_n) \cong \R \limpro_{n \in \N} \sF(U_n) \]
in $D(\bLoc_k)$ or $D(\bBorn_k)$. Moreover, we notice that it is enough to settle the case when $X$ is the direct product of a closed disk of dimension $1$ with an open disk of dimension $1$ to then deduce the theorem by an easy induction argument. Therefore, we will discuss only this case.

Lemma \ref{lemma:B_dagger} settles immediately the case when $\sF = \cO_X$. Then, the case of $\sF = \cO_X^r$ follows immediately from the previous one, because
\[ \limpro_{n \in \N} \cO_X^r(U_n) \cong (\limpro_{n \in \N}\cO_X(U_n))^r \] 
and products are exact fucntors.

Now, consider a general coherent sheaf $\sF$ on $X$. Since $\sF|_{U_n}$ is coherent on $U_n$ by the dagger version of Kiehl's Theorem on coherent sheaves on affinoid spaces (see the first section of Chapter 6 of \cite{Bam}), $\sF$ is associated to a finitely generated $\cO_{U_n}(U_n)$-module. Therefore there exists a surjective map $\cO_X^r(U_n) \to \sF(U_n)$. We claim that such a surjective map $\cO_X^r|_{U_n} \to \sF|_{U_n}$ can be found to be induced by a map of sheaves $\a: \cO_X^r \to \sF$ (which is not necessarily surjective on global sections). To find this map, it is enough to check that the values of the standard generators of $\cO_{U_n}^r(U_n)$ are mapped in $\pi_n(\sF(X)) \subset \sF(U_n)$ by $\a|_{U_n} = \a_n$. Indeed, by Lemma \ref{lemma:top_rings} the subset $\pi_n(\sF(X))$ can be chosen to be dense in $\sF(U_n)$ (otherwise one can replace $\sF(U_n)$ with $\ol{\pi_n(\sF(X))}$ in $\sF(U_n)$ obtaining an equivalent system to the given one. See also Proposition 2.6.1 of \cite{J} for the issue of reducedness of projective limits of locally convex spaces). Since $\cO_{U_n}(U_n)$ is a Noetherian algebra, $\sF(U_n)$ a finitely generated $\cO_{U_n}(U_n)$-module and all $\cO_{U_n}(U_n)$-submodule of $\sF(U_n)$ are closed (see Chapter 6 of \cite{Bam} for a proof of this fact), we see by a simple induction argument we can find a surjective map $\a_n: \cO_{U_n}^r(U_n) \to \sF(U_n)$ which maps the standard generators of $\cO_{U_n}^r(U_n)$ to elements of $\pi_n(\sF(X))$.

Now, we can apply Theorem \ref{thm:LB_proj_0_strong} (using $m = n + 1$, which is always a possible by re-indexing the system) to the projective system which defines $\cO_X^r(X)$ to deduce that there exists bouned $B_n \subset \cO_{U_n}^r(U_n)$ such that
\[ \pi_{n, n+1}(\cO_{U_{n + 1}}^r(U_{n+1})) \subset \pi_n(\cO_X^r(X)) + B_n. \]
Then, since $\a_n: \cO_{U_n}^r(U_n) \to \sF(U_n)$ is bounded and surjective we deduce that
\[ \pi_{n, n+1}(\sF(U_{n+1})) \subset \a_n(\pi_n(\cO_X^r(X))) + \a_n(B_n) \]
but since $\a_n$ is induced by a map of sheaves $\a_n(\pi_n(\cO_X^r(X))) \subset \pi_n(\sF(X))$, therefore we can deduce that
\[ \pi_{n, n+1}(\sF(U_{n + 1})) \subset \pi_n(\sF(X)) + \a_n(B_n). \]
Therefore, the projective system that defines $\sF$ satisfies the hypothesis of Theorem \ref{thm:LB_proj_0_strong} and since $\sF(U_n)$ are LB spaces we can apply Theorem \ref{thm:web_quotient} and Theorem \ref{thm:born_LB_quotient} to deduce that 
\[ \R \limpro_{n \in \N} \sF(U_n) \cong \limpro_{n \in \N} \sF(U_n) \]
both in $D(\bLoc_k)$ and $D(\bBorn_k)$.
\end{proof}

\begin{cor} \label{cor:B_dagger}
	Let $X$ be dagger quasi-Stein space that can be embedded as a closed subspace in a direct product of a finite number of open polydisks, affine lines and dagger closed polydisks. Then, $X$ satisfies theorem B.
\end{cor}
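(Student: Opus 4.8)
The plan is to reduce the statement to the already-established Theorem \ref{thm:B_dagger} by transferring the vanishing of cohomology along the closed immersion furnished by the hypothesis. By assumption there is a closed immersion
\[ (X, \cO_X) \rhook (P, \cO_P), \]
where $P$ is a direct product of a finite number of open polydisks, affine lines and dagger closed polydisks. Theorem \ref{thm:B_dagger} tells us that $P$ satisfies Theorem B, and in fact in the strong form: coherent sheaves on $P$, regarded as sheaves with values in $\bLoc_k$ or $\bBorn_k$, have vanishing higher cohomology. All of the functional-analytic content is concentrated in that theorem, so the remaining steps are purely formal.

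First I would upgrade the knowledge about $P$ to the classical (abelian) Theorem B. Applying Lemma \ref{lemma:quasi} to each coherent sheaf on $P$, the strict exactness of $\R \Gamma(P, \sF)$ in $D(\bLoc_k)$ (or $D(\bBorn_k)$) forces exactness of the underlying complexes of abelian groups, so $(P, \cO_P)$ satisfies Theorem B as a locally ringed space.

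Second, I would transfer this to $X$ through the closed immersion. Since $X \rhook P$ is a closed immersion in the sense of the definition preceding Lemma \ref{lemma:thm_B_closed_immersion} (its defining ideal is a coherent ideal of $\cO_P$), and since $(P, \cO_P)$ satisfies Theorem B, Lemma \ref{lemma:thm_B_closed_immersion} applies verbatim and yields that $X$ satisfies Theorem B. Concretely, the push-forward of a coherent sheaf on $X$ is a coherent sheaf on $P$ (Theorem 3, \S 16 of \cite{Serre}), its higher cohomology on $P$ vanishes by the previous step, and a closed immersion identifies the cohomology on $X$ with that of the push-forward on $P$, so the cohomology on $X$ vanishes as well.

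The argument also produces a refinement at essentially no extra cost: instead of passing through the abelian statement for $P$, one may feed the strong conclusion of Theorem \ref{thm:B_dagger} directly into Lemma \ref{lemma:thm_B_closed_immersion_strong}, obtaining that coherent sheaves on $X$ have vanishing cohomology already in $D(\bLoc_k)$ or $D(\bBorn_k)$, and then conclude with Lemma \ref{lemma:quasi}. There is no genuine obstacle at this stage: the entire analytic difficulty has been dispatched in Theorem \ref{thm:B_dagger}, and the only points one must verify are that the embedding target is exactly of the type covered by that theorem and that the embedding is a closed immersion with coherent defining ideal, so that the exactness of the direct-image functor and the coherence of the push-forward, both recorded in the cited lemmas, are at our disposal.
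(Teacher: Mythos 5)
Your proposal is correct and follows essentially the same route as the paper: the paper's proof is precisely your ``refinement'' paragraph, namely Theorem \ref{thm:B_dagger} combined with Lemma \ref{lemma:thm_B_closed_immersion_strong} to get vanishing in $D(\bLoc_k)$ or $D(\bBorn_k)$ on $X$, followed by Lemma \ref{lemma:quasi} to recover the classical statement. Your primary variant (passing to the abelian statement on the ambient product first via Lemma \ref{lemma:quasi} and then using Lemma \ref{lemma:thm_B_closed_immersion}) only reorders the same two formal steps and is equally valid.
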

\begin{proof}
The corollary is proven using Theorem \ref{thm:B_dagger} and Lemma \ref{lemma:thm_B_closed_immersion_strong}. Moreover, using Lemma \ref{lemma:quasi} we can deduce the Theorem B in its classical formulation.
\end{proof}

\begin{rmk}
We want to emphasize the fact that Theorem \ref{thm:B_dagger} and Corollary \ref{cor:B_dagger} apply also when the base field is Archimedean (\ie $\R$ or $\C$) and, in this case, it can be interpreted within the theory developed in \cite{Bam}. For example, they imply the Theorem B for spaces like products of closed and open polydisks over $\C$, which is a result of which we do not know any appearance in the literature about complex analytic spaces and complex geometry.
\end{rmk}

\section{Conclusions}

As first concluding remark, we underline that our study of coherent sheaves as bornological modules over dagger quasi-Stein algebras, beside studying them as locally convex modules, is not motivated by the sake of completeness, but because of our previous works which use the bornological point of view in analytic geometry as more natural. In particular, the category of (complete) bornological vector spaces as the advantage of being a closed symmetric monoidal category with respect to the category of locally convex spaces, which is not. In \cite{BaBe} and \cite{BaBeKr} we explain how to interpret analytic geometry as relative algebraic geometry in the sense of T\"oen-Vezzosi, with respect to the category of complete bornological vector spaces, as a first step to give foundations for a theory of derived analytic geometry. The vanishing of the cohomology of coherent sheaves, when considered as complete bornological modules, means that the natural cohomology theory in the framework described in \cite{BaBe} and \cite{BaBeKr} gives the expected results. Therefore, this work si also a consistency check for that proposed theory.

The Theorem B proved so far can be used to simplify computations of rigid cohomology and computations related to the overconvergent de Rham-Witt cohomology in the following way. In Theorem 1.4 of \cite{Bert} and Proposition 4.37 of \cite{DLZ} the authors were interested to show that the cohomology theories they are discussing are independent of the compactifications and of the formal models of the compactifications chosen. In both cases the problem is reduced to check that the higher cohomology groups of sheaves of differential forms vanishes on spaces of the form $A \times B$, where $A$ is a dagger affinoid space and $B$ is an open polydisk or similarly to the case when $A$ is a Stein space and $B$ is a closed polydisk with its dagger affinoid structure. See the proofs of claim (ii) and (iii) of Theorem 1.4 of \cite{Bert} for more details about these computations, and the last part of section 4 of \cite{DLZ}. Our Theorem \ref{thm:B_dagger} has these computations as particular cases and implies them from a more theoretical viewpoint.

We conclude this work discussing the following conjecture. 

\begin{conj} \label{conj:embedding}
Let $X$ be a (dagger) quasi-Stein space of finite dimension. Then, there exist $n_s, n_a \in \N$ and a proper injective embedding
\[ X \rhook \A_k^{n_s} \times \B_k^{n_a} \]  
where $\B_k$ is the closed unit ball of $k$, with its affinoid structure if $X$ is quasi-Stein and with its dagger affinoid structure if $X$ is dagger quasi-Stein. 
\end{conj}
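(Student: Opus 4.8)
The plan is to realize the embedding as a \emph{proper injective immersion} $X \rhook \A_k^{n_s} \times \B_k^{n_a}$ assembled from global sections of $\cO_X$, and then to invoke the standard fact that a proper injective immersion of (dagger) analytic spaces is a closed immersion. This follows the architecture of the complex Embedding Theorem (\S V.1 of \cite{GR}) and of its non-Archimedean counterpart due to L\"utkebohmert \cite{Lut}, the new ingredient being a systematic separation of the coordinate functions into a \emph{bounded} family, responsible for the $\B_k$-directions, and an \emph{exhausting} family, responsible for the $\A_k$-directions.

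First I would fix an affinoid exhaustion $X = \bigcup_{n \in \N} U_n$ as in the definition of quasi-Stein space, set $d = \dim X$, and record that each $U_n$ is a (dagger) affinoid space, hence admits a closed embedding into a polydisk via a finite generating set of $\cO_X(U_n)$. The goal is to promote these level-by-level embeddings to a single \emph{global} one. For this I would use Theorem A in the form available here: coherent sheaves on each $U_n$ are globally generated (Tate/Kiehl, and Theorem \ref{thm:B_dagger} for the pieces at hand), and the Weierstrass, hence dense, transition maps let one approximate sections on $U_n$ by restrictions of sections on $U_{n+1}$, exactly as in the density argument in the proof of Theorem \ref{thm:kiehl}. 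Running this approximation along the tower produces global functions $f \in \cO_X(X)$ that, at every point $x \in X$, separate $x$ from its neighbours and span the cotangent space $\fm_x / \fm_x^2$; this is precisely the data of an injective immersion.

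Next I would split the constructed functions according to their growth along the exhaustion. A function whose seminorms $\|f\|_{U_n}$ stay bounded uniformly in $n$ descends, after rescaling, to a map $X \to \B_k$ and supplies a $\B_k$-coordinate; a function whose seminorms $\|f\|_{U_n}$ are unbounded (an \emph{exhausting} function, to be constructed so that the preimage of each closed polydisk in the target lands inside some $U_n$) supplies an $\A_k$-coordinate and is the mechanism that forces \emph{properness}, since the preimage of a bounded polydisk is then contained in a single affinoid $U_n$ and hence compact. Finally a general-position argument would reduce the a priori large collection of functions to finitely many, yielding the integers $n_s$ and $n_a$ bounded in terms of $d$ (in the complex case one expects $n_s + n_a \le 2d+1$ by the Sard/Baire genericity of \S V.1 of \cite{GR}).

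The main obstacle is this last general-position step in the non-Archimedean setting, where neither Sard's theorem nor the Baire-category argument over $\C$ is available: one must instead run a Bertini-type genericity argument over the residue field on the reductions of the $U_n$, in the spirit of \cite{Lut}, and verify that the good locus survives in the limit over $n$. Two further difficulties are specific to the present context. The dagger case requires every separating, immersing and exhausting function to be \emph{overconvergent}, so the approximations must be controlled not merely on each $U_n$ but on a neighbourhood $A \lt \rho^{-1} X \gt^\dagger$, uniformly along the tower. And the split into $\A_k$- and $\B_k$-factors must be shown to be \emph{intrinsic}, namely that the bounded directions are exactly those coming from the persistent border of the exhaustion while the exhausting directions come from the Stein (interior) part; isolating these two parts, and treating the border by a relative version of \cite{Lut} over the affinoid directions, is where the real work lies. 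A natural first case to settle is the borderless one $n_a = 0$, which is the Stein Embedding Theorem of \cite{Lut}, from which I would then try to bootstrap the general quasi-Stein statement.
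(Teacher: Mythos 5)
This statement is not proved in the paper at all: it is posed as an open conjecture, and the author explicitly defers its study to future work (``We do not enter here in a further discussion of this problem''). So there is no proof of the paper's to compare yours against, and your text should be judged as a self-contained argument. As it stands, it is a research plan rather than a proof: you yourself leave open the three steps that carry all the weight --- the general-position (Bertini-type) argument over the residue field, the uniform overconvergence control needed in the dagger case, and the treatment of the border --- so the proposal does not establish the statement.

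Beyond the admitted gaps, two steps as described would actually fail. First, the ``approximation along the tower'' that is supposed to upgrade level-wise embeddings to global separating and immersive functions is modelled on \S V.1 of \cite{GR} and on \cite{Lut}, but both of those arguments use essentially that $U_n$ is relatively compact in $U_{n+1}$ (the Stein condition): one perturbs a function on $U_{n+1}$ by something small on the compact set $U_n$ and controls the limit on each compact subset. For a genuinely quasi-Stein, non-Stein space the border of $U_n$ persists in all later $U_m$, density of the Weierstrass transition maps gives no uniform control at those persistent border points, and injectivity and immersivity can be destroyed in the limit exactly there. Second, your closing strategy --- that the bounded directions are ``intrinsically'' the border directions and the exhausting ones the Stein directions, so that one can bootstrap via a relative version of \cite{Lut} over the affinoid part --- is contradicted by the paper's own final example: the closed subspace $X_1 X_2 = \tfrac{1}{2}$ of $\B_k^+ \times \B_k^-$ is a half-open annulus that can be realized with either its outer or its inner border as the ``closed'' direction, depending on which projection one uses. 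Hence no intrinsic splitting of coordinates exists, quasi-Stein spaces need not fiber over an affinoid factor, and any proof must construct the pair $(n_s, n_a)$ non-canonically; this is precisely the difficulty that keeps the statement a conjecture.
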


This conjecture implies that the Theorem B we proved so far extends to all dagger quasi-Stein spaces. Indeed, we can apply Theorem 6.1.9 of \cite{Poi} to the proper map $X \rhook \A_k^{n_s} \times \B_k^{n_a}$. The use of Theorem 6.1.9 of \cite{Poi} assume the Direct Image Theorem for dagger analytic spaces, which is proved in \cite{GK} only under restrictive hypothesis. A general enough version of the Direct Image Theorem can be proved that allows to deduce the general Theorem B from Conjecture \ref{conj:embedding}, but it is probably possible to avoid the use of the Direct Image Theorem by studying the actual properties of the conjectured embeddings (which are of a very particular form). We do not enter here in a further discussion of this problem, which will be discussed in future works.

We notice that Conjecture \ref{conj:embedding} would generalize the following classical results:

\begin{thm} (Embedding Theorem) \\
	Let $X$ be Stein space of finite dimension. Then, there exist an $n \in \N$ and a proper injective embedding
	\[ X \rhook \A_k^n. \]
\end{thm}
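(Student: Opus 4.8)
The plan is to follow the strategy of Remmert's embedding theorem in the complex analytic setting (see \S V.1 of \cite{GR}) and its non-Archimedean realization in \cite{Lut}, organizing the argument around the exhaustion of $X$ and the availability of Theorem B, hence of Theorem A, for Stein spaces, which is at our disposal by Theorem \ref{thm:kiehl} and the Proposition deducing generation by global sections from Theorem B. Write $d = \dim X$ and fix a Stein exhaustion $U_1 \subset U_2 \subset \cdots$, with each $U_i$ affinoid and relatively compact in the interior of $U_{i+1}$, and $X = \bigcup_{i \in \N} U_i$.

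First I would produce, using Theorem A for $X$, a global injective immersion together with one distinguished proper coordinate. Since every coherent sheaf on $X$ is generated by its global sections, $\cO_X(X)$ separates the points of $X$ and, at each point, the global functions generate the maximal ideal and so separate tangent directions; concretely, on each relatively compact $U_i$ one finds a finite subset of $\cO_X(X)$ whose restrictions generate the affinoid algebra $\cO_X(U_i)$ (the restriction $\cO_X(X) \to \cO_X(U_i)$ having dense image because the system maps are Weierstrass localizations), giving a closed immersion $U_i \rhook \B_k^{N_i}$. Independently, the exhaustion yields a \emph{proper} function: solving the relevant Cousin-type problems with Theorem B, I would construct $h \in \cO_X(X)$ as the limit of a series that converges because the projective system $\{\cO_X(U_i)\}$ is Mittag-Leffler (cf. Lemma \ref{lem:TopML}), arranged so that the preimage under $h$ of any bounded disk is contained in some $U_i$. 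Retaining $h$ as a distinguished coordinate and adjoining the finitely many separating and immersing functions coming from the various $U_i$ then gives a map $\Phi \colon X \to \A_k^{m}$, with $m$ a priori only countable, which is an injective immersion whose last coordinate is proper.

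The core step, and the one that forces $n$ to be finite, is the reduction of the ambient dimension by generic linear projection. For $n = 2d+1$ I would consider the affine parameter space of linear projections $p \colon \A_k^{m} \to \A_k^{n}$ that fix the proper coordinate $h$, and show that the set of $p$ for which $p \circ \Phi$ fails to be injective, fails to be an immersion, or fails to be proper is contained in a countable union of proper Zariski-closed (equivalently, nowhere dense analytic) subsets of the parameter space. As in the classical dimension count, the injectivity and immersivity defects are governed by the image of the difference map $(x,y) \mapsto \Phi(x)-\Phi(y)$ on $X \times X$ off the diagonal and by the image of the differential on the tangent bundle, both of dimension $\le 2d < n$, so the projection directions collapsing them form a thin set; properness is inherited from the retained coordinate $h$, which the projection leaves untouched. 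A point of the parameter space avoiding all these thin sets yields the desired proper injective immersion $X \rhook \A_k^{n}$, which is automatically a closed embedding because it is proper.

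I expect the generic-projection step to be the main obstacle, for two non-Archimedean–specific reasons. First, \emph{genericity} must be made rigorous over a base field $k$ that is neither algebraically closed nor locally compact: one has to argue that the countably many exceptional thin sets cannot cover the $k$-points of the parameter space, which calls either for a Baire-category argument on a suitable polydisk neighbourhood (using completeness of $k$) or for passing to $\kalg$ and descending. Second, properness is fragile under projection, so the bookkeeping that keeps the exhaustion coordinate $h$ intact throughout the reduction — together with the verification that the finitely many remaining coordinates, assembled as convergent limits over the exhaustion via Theorem B, still separate points and tangents \emph{globally} rather than merely on each $U_i$ — is where the delicate analytic estimates concentrate.
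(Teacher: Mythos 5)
The paper does not actually prove this statement: it is quoted as the classical Embedding Theorem, and the ``proof'' is a pointer to Theorem 4.25 of \cite{Lut} for the non-Archimedean case and to the Embedding Theorem on p.~126 of \cite{GR} for the complex case. Your outline reconstructs the Remmert-style strategy underlying those references --- exhaustion, Theorem A to separate points and tangent directions on each $U_i$, a properness device, and generic linear projection down to dimension $2d+1$ --- so at the level of architecture you are on the right track, and your worry about making ``generic'' rigorous over a non-algebraically-closed, non-locally-compact $k$ is legitimate but surmountable (Baire category on a polydisk in the parameter space works, since proper Zariski-closed subsets have empty interior over an infinite complete valued field).

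There is, however, a concrete error in your properness mechanism. You propose to manufacture a single global function $h \in \cO_X(X)$ such that the preimage under $h$ of any bounded disk is contained in some $U_i$, and to carry this one ``proper coordinate'' untouched through all the projections. No such $h$ exists once $\dim X \ge 2$: the fibre $h^{-1}(0)$ of any nonconstant global function is a closed analytic subset of $X$ of positive dimension, and such a subset cannot be contained in any affinoid (resp.\ relatively compact) member of the exhaustion --- already for $X = \A_k^2$, or $\C^2$, every nonconstant global function has unbounded zero locus. Properness of the eventual embedding must instead be achieved by a finite tuple $f_1,\dots,f_m$ for which $\max_j |f_j|$ is an exhaustion function of $X$; this is how both \cite{Lut} and \cite{GR} proceed. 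But then properness is no longer ``inherited from one retained coordinate'': one has to verify that the generic linear projection preserves the exhaustion property of the tuple, and this interaction between the projection step and the properness step is precisely where the delicate estimates of the classical proof are concentrated. As set up, your bookkeeping would fail at that point, so the proposal as written has a genuine gap rather than being a complete alternative proof.
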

\begin{proof}
See Theorem 4.25 of \cite{Lut} for the non-Archimedean case of the theorem and the Embedding Theorem at page 126 of \cite{GR} for the complex analytic case of the theorem.
\end{proof}

Notice that Conjecture \ref{conj:embedding} would not imply that all quasi-Stein spaces are of the form $A \times S$, where $A$ is an affinoid space and $S$ a Stein space, as next example show.

\begin{exa}
	 Consider the Zariski closed subspace of $\B_k^+ \times \B_k^-$, where $\B_k^+$ is the closed disk of radius $1$ and $\B_k^-$ is the open one, given by the equation $X_1 X_2 = \frac{1}{2}$ (where $X_1$ and $X_2$ are the coordinates of the disks). This quasi-Stein space is the half-open and half-closed disk of radius $1$ and $\frac{1}{2}$ and it can be realized with the outer border or with the inner border, depending if it is projected to $\B_k^+$ or to $\B_k^-$.
\end{exa}

\end{document}